\providecommand{\U}[1]{\protect\rule{.1in}{.1in}}
\newtheorem{theorem}{Theorem}[section]
\newtheorem{corollary}[theorem]{Corollary}
\newtheorem{assumption}[theorem]{Assumption}
\newtheorem{example}[theorem]{Example}
\newtheorem{lemma}[theorem]{Lemma}
\newtheorem{proposition}[theorem]{Proposition}
\newtheorem{remark}[theorem]{Remark}
\newenvironment{proof}[1][Proof]{\noindent \textbf{#1.} }{\  \rule{0.5em}{0.5em}}
\numberwithin{equation}{section}
\begin{document}

\title{ Linear quadratic problems for fully coupled forward-backward stochastic
control systems}
\author{Mingshang Hu\thanks{Zhongtai Securities Institute for Financial Studies,
Shandong University, Jinan, Shandong 250100, PR China. humingshang@sdu.edu.cn.
Research supported by NSF (No. 11671231) and Young Scholars Program of
Shandong University (No. 2016WLJH10). }
\and Shaolin Ji\thanks{Zhongtai Securities Institute for Financial Studies,
Shandong University, Jinan, Shandong 250100, PR China. jsl@sdu.edu.cn
(Corresponding author). Research supported by NSF No. 11571203.}
\and Xiaole Xue\thanks{Zhongtai Securities Institute for Financial Studies,
Shandong University, Jinan, Shandong 250100, China. xiaolexue1989@gmail.com,
xuexiaole.good@163.com. Research supported by NSF (Nos. 11701214 and 11801315)
and Natural Science Foundation of Shandong Province(ZR2018QA001)} }
\maketitle

\begin{abstract}
This paper is concerned with optimal control of stochastic fully coupled
forward-backward linear quadratic (FBLQ) problems with indefinite control
weight costs. In order to obtain the state feedback representation of the
optimal control, we propose a new decoupling technique and obtain one kind of
non-Riccati-type ordinary differential equations (ODEs). By applying the
completion-of-squares method, we prove the existence of the solutions for the
obtained ODEs under some assumptions and derive the state feedback form of the
optimal control. For this FBLQ problem, the optimal control depends on the
entire trajectory of the state process. Some sepcial cases are given to
illustrate our results.

\end{abstract}

{\textbf{Key words}. }fully coupled forward-backward stochastic differential
equation, linear quadratic optimization control, stochastic maximum principle,
completion-of-squares method


\textbf{AMS subject classifications.} 93E20, 60H10, 35K15

\section{Introduction}

The fully coupled forward-backward stochastic differential equations (FBSDEs)
are an important class of stochastic differential equations and there are many
literatures on the well-posedness of them. When the coefficients of a fully
coupled FBSDE are deterministic and the diffusion coefficient of the forward
equation is nondegenerate, Ma, Protter and Yong \cite{MPY} proposed the
four-step scheme approach. Under some monotonicity conditions, Hu and Peng
\cite{Hu-Peng95} first obtained an existence and uniqueness result which was
generalized by Peng and Wu \cite{Peng-W}. Yong \cite{Yong1997} developed this
approach and called it the method of continuation. The fixed point approach is
due to Antonelli \cite{Antonelli}, Pardoux and Tang \cite{Pardoux-Tang}. The
readers may refer to Ma and Yong \cite{Ma-Y}, Cvitani\'{c} and Zhang
\cite{Cvi-Zhang}, Ma, Wu, Zhang and Zhang \cite{Ma-WZZ}, Yong and Zhou
\cite{Yong-Zhou} for the FBSDE theory.

As a well-defined dynamic system, it is appealing to investigate the optimal
control of the fully coupled FBSDEs. In this paper, the optimal control of a
linear fully coupled FBSDE with a quadratic criteria is investigated. We call
this kind of problem the stochastic forward-backward linear-quadratic (FBLQ) problem.

It is well-known that the stochastic linear-quadratic (LQ) problems play an
important role in optimal control theory. On one hand, many nonlinear control
problems can be approximated by the LQ control problems; on the other hand,
solutions to the LQ control problems show elegant properties because of their
brief and beautiful structures. Stochastic LQ regulator problems have been
first studied by Wonham \cite{Wohanm} and by many researchers later
\cite{Ben,Sun-LY,Tang03,Koh-T}. Most of them imposed the positiveness for the
coefficient of the control in the cost functional. Chen, Li and Zhou found
even when the coefficient is negative, the stochastic control problem is still
well-posed (see \cite{Chen-LZ,Chen-Z}). For stochastic LQ problems, one method
is applying the stochastic maximum principle to obtain the optimal control and
then solving the corresponding Hamiltonian system by a decoupling technique
which leads to a Riccati equation. Finally the optimal control is expressed in
the form of state feedback. Another method is the completion-of-squares method
which yields the same Riccati equation and state feedback form of the optimal
control. Dokuchaev and Zhou \cite{Dokuchaev-Zhou} first proposed the
stochastic backward linear-quadratic (BLQ) problem in which the state equation
is described by a backward stochastic differential equation (BSDE). Applying
the completion-of-squares method and the decoupling method, Lim and Zhou
\cite{Lim-Z} completely solved it and obtained the state feedback representation.

Up to our knowledge, there are only a few results for the stochastic FBLQ
problem and except some special examples in the literatures, there are no
systematical results related to the state feedback form of the optimal
control. Our main contribution of this paper is to obtain the state feedback
form of the optimal control for the FBLQ problem. After applying the
stochastic maximum principle, we find that the decoupling technique for
stochastic LQ and BLQ problems is no longer applicable to the FBLQ problem. In
more details, for the stochastic FBLQ problem, the obtained Hamiltonian system
(\ref{eq-ham}) consists of two parts: $(\bar{X}(\cdot),m(\cdot))$ (the forward
state process $\bar{X}(\cdot)$ and its backward adjoint process $m(\cdot)$)
and $(\bar{Y}(\cdot),h(\cdot))$ (the backward state process $\bar{Y}(\cdot)$
and its forward adjoint process $h(\cdot)$). Both of them are fully coupled
FBSDEs. Following the decoupling method for the stochastic LQ problem, we try
to decouple the above Hamiltonian system by
\[%
\begin{array}
[c]{cc}%
h(t)= & P_{1}(t)\bar{X}(t)+P_{2}(t)\bar{Y}(t)+\varphi_{1}(t),\\
m(t)= & P_{3}(t)\bar{X}(t)+P_{4}(t)\bar{Y}(t)+\varphi_{2}(t).
\end{array}
\]
In other words, we want to use the state process $(\bar{X}(\cdot),\bar
{Y}(\cdot))$ to represent the adjoint process $(m(\cdot),h(\cdot))$. But after
calculation, we can't get the Riccati-type equations for $P_{i}(t)$,
$i=1,2,3,4$ through this decoupling approach. To overcome this difficulty, we
propose the following new decoupling technique: we regard the forward
stochastic differential equation (SDE) $(\bar{X}(\cdot),h(\cdot))$ as the
state process, the BSDE $(\bar{Y}(\cdot),m(\cdot))$ as the adjoint process and
decouple the Hamiltonian system (\ref{eq-ham}) by%
\begin{equation}%
\begin{array}
[c]{cc}%
m(t)= & P_{1}(t)\bar{X}(t)+P_{2}(t)^{\intercal}h(t)+\varphi_{1}(t),\\
\bar{Y}(t)= & P_{2}(t)\bar{X}(t)-P_{3}(t)h(t)+\varphi_{2}(t).
\end{array}
\label{eq-decoupling}%
\end{equation}
Using the above decoupling technique, we derive the equations for $P_{i}(t)$,
$i=1,2,3$, $\varphi_{1}(\cdot)$, $\varphi_{2}(\cdot)$ and obtain the optimal
control which can be explicitly expressed as a feedback form of the state
process $(\bar{X}(\cdot),\bar{Y}(\cdot))$ (see Corollary \ref{cor-1}).

Although we can decouple the Hamiltonian system (\ref{eq-ham}) by
(\ref{eq-decoupling}), the obtained equations for $P_{i}(t)$, $i=1,2,3$ are no
longer Riccati-type ones. They are highly nonlinear ordinary differential
equations (ODEs) and the solvability of them is challenging. In this paper, we
propose a project to obtain the existence of the solutions $P_{i}(t)$,
$i=1,2,3$. We first construct a sequence of Riccati equations for $_{i}%
\tilde{P}(t)$. Then, applying the completion-of-squares method, we establish
the the relations between $P_{i}(t)$, $i=1,2,3$ and $_{i}\tilde{P}(t)$ (see
Theorem \ref{th-p-ex}) which are different from the stochastic LQ and BLQ
problems. With the help of these relations and the good properties of
$_{i}\tilde{P}(t)$, we obtain the existence of the solutions $P_{i}(t)$,
$i=1,2,3$. Especially, we relax the positiveness of the control weight in the
cost functional as in Chen et. al \cite{Chen-LZ,Chen-Z}. For this indefinite
case, the control $\bar{u}(\cdot)$ obtained by our decoupling technique is
only a candidate of the optimal control. By applying the completion-of-squares
method, it can be verified that $\bar{u}(\cdot)$ is indeed the optimal control
of the FBLQ problem. Furthermore, although the optimal control for the FBLQ
problem may not be unique, we can still prove that the optimal state feedback
optimal control law is unique (see Theorem \ref{th-infi}). Finally, it is
worth pointing out that we can't solve the FBLQ problem by the decoupling
method or the completion-of-squares method alone.

The rest of the paper is organized as follows. In Section 2, we give the
preliminaries and the formulation of the FBLQ problem. A new decoupling
technique is introduced in Section 3. Applying the completion-of-squares
method, we prove the existence and uniqueness results for non-Riccati-type
equations in Section 4. In Section 5, we obtain the feedback optimal control
for the FBLQ problem. Several special cases are given to illustrate our
results in Section 6.

\section{Preliminaries and formulation of FBLQ problem}

Let $(\Omega,\mathcal{F},P)$ be a complete probability space on which a
standard $d$-dimensional Brownian motion $B=(B_{1}(t),B_{2}(t),...B_{d}%
(t))_{0\leq t\leq T}^{\intercal}$ is defined. Assume that $\mathbb{F=}\{
\mathcal{F}_{t},0\leq t\leq T\}$ is the $P$-augmentation of the natural
filtration of $B$, where $\mathcal{F}_{0}$ contains all $P$-null sets of
$\mathcal{F}$. Denote by $\mathbb{R}^{n}$ the $n$-dimensional real Euclidean
space and $\mathbb{R}^{n\times k}$ the set of $n\times k$ real matrices. Let
$\langle\cdot,\cdot\rangle$ (resp. $\left\vert \cdot\right\vert $) denote the
usual scalar product (resp. usual norm) of $\mathbb{R}^{n}$ and $\mathbb{R}%
^{n\times k}$. The scalar product (resp. norm) of $M=(m_{ij})$, $N=(n_{ij}%
)\in\mathbb{R}^{n\times k}$ is denoted by $\langle M,N\rangle
=tr\{MN^{\intercal}\}$ (resp.$\Vert M\Vert=\sqrt{MM^{\intercal}}$), where the
superscript $^{\intercal}$ denotes the transpose of vectors or matrices.

For each given $p\geq1$, we introduce the following spaces.

\noindent$\mathbb{S}^{n}$: the space of all $n\times n$ symmetric
matrices;\newline\noindent$\mathbb{S}_{+}^{n}$: the subspace of all
nonnegative definite matrices of $\mathbb{S}^{n}$; \newline\noindent
$\mathbb{\hat{S}}_{+}^{n}$: the subspace of all positive definite matrices of
$\mathbb{S}^{n}$; \newline\noindent$L^{p}(\mathcal{F}_{T};\mathbb{R}^{n})$ :
the space of $\mathcal{F}_{T}$-measurable $\mathbb{R}^{n}$-valued random
vectors $\eta$ such that
\[
||\eta||_{p}:=(\mathbb{E}[|\eta|^{p}])^{\frac{1}{p}}<\infty;
\]
\noindent$L^{\infty}(\mathcal{F}_{T};\mathbb{R}^{n})$: the space of
$\mathcal{F}_{T}$-measurable $\mathbb{R}^{n}$-valued random vectors $\eta$
such that
\[
||\eta||_{\infty}=\mathrm{ess~sup}_{\omega\in\Omega}|\eta(\omega)|<\infty;
\]
\noindent$L^{\infty}(0,T;\mathbb{R}^{n\times k})$: the space of essential
bounded measurable $\mathbb{R}^{n\times k}$-valued functions; \newline%
\noindent$C([0,T],\mathbb{R}^{n})$: the space of continuos $\mathbb{R}^{n}%
$-valued functions; \newline\noindent$L_{\mathbb{F}}^{p}(0,T;\mathbb{R}^{n})$:
the space of $\mathbb{F}$-adapted $\mathbb{R}^{n}$-valued stochastic processes
on $[0,T]$ such that
\[
\mathbb{E}\left[  \int_{0}^{T}|f(r)|^{p}dr\right]  <\infty;
\]
\noindent$L_{\mathbb{F}}^{\infty}(0,T;\mathbb{R}^{n})$: the space of
$\mathbb{F}$-adapted $\mathbb{R}^{n}$-valued stochastic processes on $[0,T]$
such that
\[
||f(\cdot)||_{\infty}=\mathrm{ess~sup}_{(t,\omega)\in\lbrack0,T]\times\Omega
}|f(t,\omega)|<\infty;
\]
\noindent$L_{\mathbb{F}}^{p,q}(0,T;\mathbb{R}^{n})$: the space of $\mathbb{F}%
$-adapted $\mathbb{R}^{n}$-valued stochastic processes on $[0,T]$ such that
\[
||f(\cdot)||_{p,q}=\{\mathbb{E}[(\int_{0}^{T}|f(t)|^{p}dt)^{\frac{q}{p}%
}]\}^{\frac{1}{q}}<\infty;
\]
\noindent$L_{\mathbb{F}}^{p}(\Omega;C([0,T],\mathbb{R}^{n}))$: the space of
$\mathbb{F}$-adapted $\mathbb{R}^{n}$-valued continuous stochastic processes
on $[0,T]$ such that
\[
\mathbb{E}[\sup\limits_{0\leq t\leq T}|f(t)|^{p}]<\infty.
\]

Consider the following linear forward-backward stochastic control system
\begin{equation}
\left\{
\begin{array}
[c]{rl}%
dX(t)= & [A_{1}(t)X(t)+B_{1}(t)Y(t)+C_{1}(t)Z(t)+D_{1}(t)u(t)]dt\\
& +[A_{2}(t)X(t)+B_{2}(t)Y(t)+C_{2}(t)Z(t)+D_{2}(t)u(t)]dB(t),\\
dY(t)= & -[A_{3}(t)X(t)+B_{3}(t)Y(t)+C_{3}(t)Z(t)+D_{3}(t)u(t)]dt+Z(t)dB(t),\\
X(0)= & x_{0},\ Y(T)=FX(T)+\xi,
\end{array}
\right.  \label{state-lq}%
\end{equation}
and minimizing the following cost functional
\begin{equation}%
\begin{array}
[c]{rl}%
J(u(\cdot))= & \frac{1}{2}\mathbb{E}\left[  \int_{0}^{T}\left(  \left\langle
A_{4}(t)X(t),X(t)\right\rangle +\left\langle B_{4}(t)Y(t),Y(t)\right\rangle
+\left\langle C_{4}(t)Z(t),Z(t)\right\rangle \right.  \right. \\
& \left.  \left.  +\left\langle D_{4}(t)u(t),u(t)\right\rangle \right)
dt+\left\langle GX(T),X(T)\right\rangle +\left\langle HY(0),Y(0)\right\rangle
\right]
\end{array}
\label{cost-lq}%
\end{equation}
where $A_{i}(\cdot)$, $B_{i}(\cdot)$, $C_{i}(\cdot)$, $D_{i}(\cdot)$ are
deterministic matrix-valued functions of suitable sizes, $\xi\in
L^{2}(\mathcal{F}_{T};\mathbb{R}^{m})$, $F$, $G$, $H$ are $\mathbb{R}^{m\times
n}-$, $\mathbb{R}^{n\times n}-$, $\mathbb{R}^{m\times m}-$valued matrices
respectively. To simplify the presentation, we only consider the case $d=1$.
The results for $d>1$ are similar. The solution to (\ref{state-lq}) is
$(X(\cdot),Y(\cdot),Z(\cdot))\in L_{\mathbb{F}}^{2}(\Omega;C([0,T],\mathbb{R}%
^{n}))\times L_{\mathbb{F}}^{2}(\Omega;C([0,T],\mathbb{R}^{m}))\times
L_{\mathbb{F}}^{2,2}(0,T;\mathbb{R}^{m})$. The admissible control set is all
the elements in $L_{\mathbb{F}}^{2}(0,T;\mathbb{R}^{k})$. Let $u(\cdot)$ be an
admissible control, and the corresponding state is $(X(\cdot),Y(\cdot
),Z(\cdot))$.

Let $\bar{u}(\cdot)$ be an optimal control and $(\bar{X}(\cdot),\bar{Y}%
(\cdot),$ $\bar{Z}(\cdot))$ be the corresponding optimal state. Then by
stochastic maximum principle (see \cite{Peng-1993,Wu98,Hu-JX}), the optimal
control $\bar{u}(\cdot)$ satisfies%
\begin{equation}
D_{4}(t)\bar{u}(t)+D_{1}(t)^{\intercal}m(t)+D_{2}(t)^{\intercal}%
n(t)+D_{3}(t)^{\intercal}h(t)=0, \label{eq-con-ne}%
\end{equation}
where
\begin{equation}
\left\{
\begin{array}
[c]{rl}%
dh(t)= & \left[  B_{3}(t)^{\intercal}h(t)+B_{1}(t)^{\intercal}m(t)+B_{2}%
(t)^{\intercal}n(t)+B_{4}(t)\bar{Y}(t)\right]  dt\\
& +\left[  C_{3}(t)^{\intercal}h(t)+C_{1}(t)^{\intercal}m(t)+C_{2}%
(t)^{\intercal}n(t)+C_{4}(t)\bar{Z}(t)\right]  dB(t),\\
dm(t)= & -\left[  A_{3}(t)^{\intercal}h(t)+A_{1}(t)^{\intercal}m(t)+A_{2}%
(t)^{\intercal}n(t)+A_{4}(t)\bar{X}(t)\right]  dt\\
& +n(t)dB(t),\\
h(0)= & H\bar{Y}(0),\text{ }m(T)=G\bar{X}(T)+F^{\intercal}h(T).
\end{array}
\right.  \label{eq-hmn}%
\end{equation}

\begin{assumption}
\label{assum-1} For any $u(\cdot)\in L_{\mathbb{F}}^{2}(0,T;\mathbb{R}^{k})$,
(\ref{state-lq})(resp. (\ref{eq-hmn})) has a unique solution in $L_{\mathbb{F}%
}^{2}(\Omega;C([0,T],\mathbb{R}^{n}))\times L_{\mathbb{F}}^{2}(\Omega
;C([0,T],\mathbb{R}^{m}))\times L_{\mathbb{F}}^{2,2}(0,T;\mathbb{R}^{m}%
)$(resp. $L_{\mathbb{F}}^{2}(\Omega;C([0,T],\mathbb{R}^{m}))\times
L_{\mathbb{F}}^{2}(\Omega;C([0,T],\mathbb{R}^{n}))\times L_{\mathbb{F}}%
^{2,2}(0,T;\mathbb{R}^{n})$).
\end{assumption}

\begin{remark}
It is well-known that there are many conditions which can guarantee the
existence and uniqueness of (\ref{state-lq}) and (\ref{eq-hmn}) (see
\cite{Ma-Y}, \cite{Cvi-Zhang}, \cite{Peng-W}, \cite{Hu-JX}) such as
monotonicity conditions or weakly coupled conditions and so on.
\end{remark}

\begin{assumption}
\label{assum-bound} The data appearing in the FBLQ problem satisfy
$A_{i}(\cdot)\in L^{\infty}(0,T;\mathbb{R}^{n\times n})$, $B_{i}(\cdot)$,
$C_{i}(\cdot)\in L^{\infty}(0,T;\mathbb{R}^{n\times m})$, $D_{i}(\cdot)\in
L^{\infty}(0,T;\mathbb{R}^{n\times k})$, for $i=1$, $2$, $A_{3}(\cdot)\in
L^{\infty}(0,T;\mathbb{R}^{m\times n})$, $B_{3}(\cdot)$, $C_{3}(\cdot)\in
L^{\infty}(0,T;\mathbb{R}^{m\times m})$, $D_{3}(\cdot)\in L^{\infty
}(0,T;\mathbb{R}^{m\times k})$, $A_{4}(\cdot)\in L^{\infty}(0,T;\mathbb{S}%
^{n})$, $B_{4}(\cdot)$, $C_{4}(\cdot)\in L^{\infty}(0,T;\mathbb{S}^{m})$,
$D_{4}(\cdot)\in L^{\infty}(0,T;\mathbb{S}^{k})$, $F\in\mathbb{R}^{m\times n}%
$, $G\in\mathbb{S}^{n}$, $H\in\mathbb{S}^{m}$.
\end{assumption}

Sometimes we need the data to satisfy the following assumptions:

\begin{assumption}
\label{assum-pos} $A_{4}(\cdot)\in L^{\infty}(0,T;\mathbb{S}_{+}^{n})$,
$B_{4}(\cdot)\in L^{\infty}(0,T;\mathbb{S}_{+}^{m})$, $G\in\mathbb{\hat{S}%
}_{+}^{n}$, $H\in\mathbb{S}_{+}^{m}$.
\end{assumption}

\begin{assumption}
\label{assum-pos-d4} $C_{4}(\cdot)\in L^{\infty}(0,T;\mathbb{S}_{+}^{m})$,
$D_{4}(\cdot)\in L^{\infty}(0,T;\mathbb{\hat{S}}_{+}^{k})$.
\end{assumption}

Note that (\ref{eq-con-ne}) becomes a sufficient condition for the optimal
control under some positiveness assumptions on the coefficients.

\begin{theorem}
\label{th-mp-lq}(see \cite{Meng09,Huang-S})Suppose that Assumptions
\ref{assum-1}, \ref{assum-bound}, \ref{assum-pos} and \ref{assum-pos-d4} hold.
If there exists an admissible control $\bar{u}(\cdot)$ satisfying
(\ref{eq-con-ne}), where $(h(\cdot),m(\cdot),n(\cdot))$ is defined in
(\ref{eq-hmn}), then $\bar{u}(\cdot)$ is the unique optimal control for the
FBLQ problem (\ref{state-lq})-(\ref{cost-lq}).
\end{theorem}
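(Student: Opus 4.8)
The plan is to establish optimality by a direct convexity (completion-of-squares) argument, exploiting that Assumptions \ref{assum-pos} and \ref{assum-pos-d4} make $J$ a convex functional of the control. Let $u(\cdot)$ be any admissible control with state $(X,Y,Z)$, and write $\delta X=X-\bar{X}$, $\delta Y=Y-\bar{Y}$, $\delta Z=Z-\bar{Z}$, $\delta u=u-\bar{u}$. By linearity of (\ref{state-lq}) the variations solve the homogeneous system (each $D_i u$ replaced by $D_i\delta u$, with $x_0$ and $\xi$ dropped), so that $\delta X(0)=0$ and $\delta Y(T)=F\delta X(T)$. Since the running and terminal terms of (\ref{cost-lq}) are quadratic with symmetric weights, I would expand $J(u)-J(\bar{u})=I_1+I_2$, where $I_1$ collects the cross terms that are linear in the variations, namely $I_1=\mathbb{E}\big[\int_0^T(\langle A_4\bar{X},\delta X\rangle+\langle B_4\bar{Y},\delta Y\rangle+\langle C_4\bar{Z},\delta Z\rangle+\langle D_4\bar{u},\delta u\rangle)\,dt+\langle G\bar{X}(T),\delta X(T)\rangle+\langle H\bar{Y}(0),\delta Y(0)\rangle\big]$, and $I_2=\tfrac12\mathbb{E}[\int_0^T(\langle A_4\delta X,\delta X\rangle+\langle B_4\delta Y,\delta Y\rangle+\langle C_4\delta Z,\delta Z\rangle+\langle D_4\delta u,\delta u\rangle)\,dt+\langle G\delta X(T),\delta X(T)\rangle+\langle H\delta Y(0),\delta Y(0)\rangle]$ is the purely quadratic remainder. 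Under Assumptions \ref{assum-pos}--\ref{assum-pos-d4} every weight in $I_2$ is nonnegative (with $D_4$ and $G$ positive definite), so $I_2\ge0$, and the whole task reduces to showing $I_1=0$.

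The heart of the argument is to eliminate $I_1$ using the adjoint system (\ref{eq-hmn}) and the stationarity condition (\ref{eq-con-ne}). I would apply It\^{o}'s formula to the two pairings $\langle m,\delta X\rangle$ and $\langle h,\delta Y\rangle$ and take expectations, so that the martingale parts drop out (the required integrability is furnished by Assumptions \ref{assum-1} and \ref{assum-bound}). In $d\langle m,\delta X\rangle$ the transpose structure of the drift of $m$ (the $A_1^{\intercal}m$ and $A_2^{\intercal}n$ terms) exactly cancels the $A_1\delta X$ and $A_2\delta X$ contributions of $d(\delta X)$, leaving $\langle A_4\bar{X},\delta X\rangle$ together with couplings to $\delta Y,\delta Z,\delta u$ and to $h$ through $A_3$; similarly $d\langle h,\delta Y\rangle$ produces $\langle B_4\bar{Y},\delta Y\rangle+\langle C_4\bar{Z},\delta Z\rangle$ plus the mirror-image couplings. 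Adding the two identities, all interior cross terms pairing $m,n,h$ against $\delta Y,\delta Z$ cancel (again by the transpose structure of the $B_i,C_i$), and only the control couplings $\langle m,D_1\delta u\rangle+\langle n,D_2\delta u\rangle+\langle h,D_3\delta u\rangle$ survive inside the integral. The boundary contributions then combine using $m(T)=G\bar{X}(T)+F^{\intercal}h(T)$ together with $\delta Y(T)=F\delta X(T)$, which makes $-\langle m(T),\delta X(T)\rangle+\langle G\bar{X}(T),\delta X(T)\rangle+\langle h(T),\delta Y(T)\rangle$ collapse to $\langle h(T),\delta Y(T)-F\delta X(T)\rangle=0$, while $h(0)=H\bar{Y}(0)$ cancels the $\langle H\bar{Y}(0),\delta Y(0)\rangle$ term. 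What is left is exactly $I_1=\mathbb{E}\int_0^T\langle D_4\bar{u}+D_1^{\intercal}m+D_2^{\intercal}n+D_3^{\intercal}h,\delta u\rangle\,dt$, which vanishes by (\ref{eq-con-ne}).

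Combining the two steps gives $J(u)-J(\bar{u})=I_2\ge0$ for every admissible $u$, so $\bar{u}$ is optimal. For uniqueness I would use that $D_4(\cdot)$ is uniformly positive definite (Assumption \ref{assum-pos-d4}): if $u$ is also optimal, then $I_2=0$ forces $\langle D_4\delta u,\delta u\rangle=0$ a.e., hence $\delta u=0$, and the well-posedness in Assumption \ref{assum-1} then yields $(\delta X,\delta Y,\delta Z)=0$, i.e. $u=\bar{u}$. The main obstacle is the bookkeeping in the It\^{o} step: one must track the transposes carefully so that all interior state--adjoint cross terms cancel pairwise, and verify that the terminal condition $m(T)=G\bar{X}(T)+F^{\intercal}h(T)$ is precisely what is needed to annihilate the remaining boundary terms. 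The positivity hypotheses enter only at the very end, to sign $I_2$ and to extract uniqueness.
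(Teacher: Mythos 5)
Your proof is correct. Note that the paper itself contains no proof of Theorem \ref{th-mp-lq}: the statement is imported from the cited references \cite{Meng09,Huang-S}, and the convexity-plus-duality argument you give is precisely the standard sufficiency proof used there, so your route coincides with the intended one.

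One point of wording deserves care. The two It\^{o} identities must be combined as a \emph{difference}, i.e.\ one applies It\^{o}'s formula to $\langle m(t),\delta X(t)\rangle-\langle h(t),\delta Y(t)\rangle$: literally adding $d\langle m,\delta X\rangle$ and $d\langle h,\delta Y\rangle$ would double the interior cross terms (e.g.\ $\langle m,B_1\delta Y\rangle$ from the first and $\langle B_1^{\intercal}m,\delta Y\rangle$ from the second enter with the same sign, and likewise the $A_3$ terms) rather than cancel them. Your displayed bookkeeping, however, carries exactly the signs of the difference: the boundary combination $-\langle m(T),\delta X(T)\rangle+\langle G\bar{X}(T),\delta X(T)\rangle+\langle h(T),\delta Y(T)\rangle=\langle h(T),\delta Y(T)-F\delta X(T)\rangle=0$, the cancellation of $\langle H\bar{Y}(0),\delta Y(0)\rangle$ against $-\langle h(0),\delta Y(0)\rangle$, and the surviving coupling $\langle D_4\bar{u}+D_1^{\intercal}m+D_2^{\intercal}n+D_3^{\intercal}h,\delta u\rangle$ are all what the correct (subtracted) identity produces, so this is a slip of phrasing, not a gap. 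Everything else is in order: $I_2\geq 0$ follows from Assumptions \ref{assum-pos} and \ref{assum-pos-d4}, and for uniqueness pointwise positive definiteness of $D_4(t)$ already forces $\delta u=0$ (uniform positivity is not needed), after which Assumption \ref{assum-1} identifies the states.
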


In the rest of this paper, sometimes we write $A$ for a (deterministic or
stochastic) process, omitting the variable $t$, whenever no confusion arises.
Under this convention, when $A\geq(>)0$ means $A(t)\geq(>)0$, $\forall
t\in\lbrack0,T]$.

\section{A new decoupling technique for FBLQ problem}

\subsection{FBLQ problem with positive definite control weight
cost\label{sub-def}}

In this subsection, we only consider the FBLQ problem (\ref{state-lq}%
)-(\ref{cost-lq}) with positive definite control weight cost. In other words,
we assume that $D_{4}>0$. The Hamiltonian system for the FBLQ problem is%
\begin{equation}
\left\{
\begin{array}
[c]{rl}%
d\bar{X}(t)= & [A_{1}(t)\bar{X}(t)+B_{1}(t)\bar{Y}(t)+C_{1}(t)\bar{Z}%
(t)+D_{1}(t)\bar{u}(t)]dt\\
& +[A_{2}(t)\bar{X}(t)+B_{2}(t)\bar{Y}(t)+C_{2}(t)\bar{Z}(t)+D_{2}(t)\bar
{u}(t)]dB(t),\\
d\bar{Y}(t)= & -[A_{3}(t)\bar{X}(t)+B_{3}(t)\bar{Y}(t)+C_{3}(t)\bar
{Z}(t)+D_{3}(t)\bar{u}(t)]dt+\bar{Z}(t)dB(t),\\
dh(t)= & \left[  B_{3}(t)^{\intercal}h(t)+B_{1}(t)^{\intercal}m(t)+B_{2}%
(t)^{\intercal}n(t)+B_{4}(t)\bar{Y}(t)\right]  dt\\
& +\left[  C_{3}(t)^{\intercal}h(t)+C_{1}(t)^{\intercal}m(t)+C_{2}%
(t)^{\intercal}n(t)+C_{4}(t)\bar{Z}(t)\right]  dB(t),\\
dm(t)= & -\left[  A_{3}(t)^{\intercal}h(t)+A_{1}(t)^{\intercal}m(t)+A_{2}%
(t)^{\intercal}n(t)+A_{4}(t)\bar{X}(t)\right]  dt\\
& +n(t)dB(t),\\
\bar{X}(0)= & x_{0},\ \bar{Y}(T)=F\bar{X}(T)+\xi,\text{ }h(0)=H\bar
{Y}(0),\text{ }m(T)=G\bar{X}(T)+F^{\intercal}h(T).
\end{array}
\right.  \label{eq-ham}%
\end{equation}
Set
\[
\tilde{X}(\cdot)=(\bar{X}(\cdot)^{\intercal},h(\cdot)^{\intercal})^{\intercal
}\text{, }\tilde{Y}(\cdot)=(m(\cdot)^{\intercal},\bar{Y}(\cdot)^{\intercal
})^{\intercal}\text{, }\tilde{Z}(\cdot)=(n(\cdot)^{\intercal},\bar{Z}%
(\cdot)^{\intercal})^{\intercal}\text{.}%
\]
Due to (\ref{eq-con-ne}), we have $\bar{u}(t)=-D_{4}(t)^{-1}(D_{1}%
(t)^{\intercal}m(t)+D_{2}(t)^{\intercal}n(t)+D_{3}(t)^{\intercal}h(t))$. Then
the Hamiltonian system (\ref{eq-ham}) can be rewritten as
\begin{equation}
\left\{
\begin{array}
[c]{rl}%
d\tilde{X}(t)= & [\tilde{A}_{1}(t)\tilde{X}(t)+\tilde{B}_{1}(t)\tilde
{Y}(t)+\tilde{C}_{1}(t)\tilde{Z}(t)]dt\\
& +[\tilde{A}_{2}(t)\tilde{X}(t)+\tilde{B}_{2}(t)\tilde{Y}(t)+\tilde{C}%
_{2}(t)\tilde{Z}(t)]dB(t),\\
d\tilde{Y}(t)= & -[\tilde{A}_{3}(t)\tilde{X}(t)+\tilde{B}_{3}(t)\tilde
{Y}(t)+\tilde{C}_{3}(t)\tilde{Z}(t)]dt+\tilde{Z}(t)dB(t),\\
\tilde{X}(0)= & (x_{0}^{\intercal},(H\bar{Y}(0))^{\intercal})^{\intercal
},\ \tilde{Y}(T)=\tilde{F}\tilde{X}(T)+\tilde{\xi},
\end{array}
\right.  \label{eq-xt-til}%
\end{equation}
where%
\[
\tilde{A}_{1}(t)=\left(
\begin{array}
[c]{ccc}%
A_{1}(t) &  & -D_{1}(t)D_{4}(t)^{-1}D_{3}(t)^{\intercal}\\
0 &  & B_{3}(t)^{\intercal}%
\end{array}
\right)  ,\;\tilde{B}_{1}(t)=\left(
\begin{array}
[c]{ccc}%
-D_{1}(t)D_{4}(t)^{-1}D_{1}(t)^{\intercal} &  & B_{1}(t)\\
B_{1}(t)^{\intercal} &  & B_{4}(t)
\end{array}
\right)  ,
\]%
\[
\tilde{C}_{1}(t)=\left(
\begin{array}
[c]{ccc}%
-D_{1}(t)D_{4}(t)^{-1}D_{2}(t)^{\intercal} &  & C_{1}(t)\\
B_{2}(t)^{\intercal} &  & 0
\end{array}
\right)  ,\;\tilde{A}_{2}(t)=\left(
\begin{array}
[c]{ccc}%
A_{2}(t) &  & -D_{2}(t)D_{4}(t)^{-1}D_{3}(t)^{\intercal}\\
0 &  & C_{3}(t)^{\intercal}%
\end{array}
\right)  ,
\]%
\[
\tilde{B}_{2}(t)=\left(
\begin{array}
[c]{ccc}%
-D_{2}(t)D_{4}(t)^{-1}D_{1}(t)^{\intercal} &  & B_{2}(t)\\
C_{1}(t)^{\intercal} &  & 0
\end{array}
\right)  ,\;\tilde{C}_{2}(t)=\left(
\begin{array}
[c]{ccc}%
-D_{2}(t)D_{4}(t)^{-1}D_{2}(t)^{\intercal} &  & C_{2}(t)\\
C_{2}(t)^{\intercal} &  & C_{4}(t)
\end{array}
\right)  ,
\]%
\[
\tilde{A}_{3}(t)=\left(
\begin{array}
[c]{ccc}%
A_{4}(t) &  & A_{3}(t)^{\intercal}\\
A_{3}(t) &  & -D_{3}(t)D_{4}(t)^{-1}D_{3}(t)^{\intercal}%
\end{array}
\right)  ,\;\tilde{B}_{3}(t)=\left(
\begin{array}
[c]{ccc}%
A_{1}(t)^{\intercal} &  & 0\\
-D_{3}(t)D_{4}(t)^{-1}D_{1}(t)^{\intercal} &  & B_{3}(t)
\end{array}
\right)  ,
\]%
\[
\tilde{C}_{3}(t)=\left(
\begin{array}
[c]{ccc}%
A_{2}(t)^{\intercal} &  & 0\\
-D_{3}(t)D_{4}(t)^{-1}D_{2}(t)^{\intercal} &  & C_{3}(t)
\end{array}
\right)  ,\;\tilde{F}=\left(
\begin{array}
[c]{ccc}%
G &  & F^{\intercal}\\
F &  & 0
\end{array}
\right)  ,\text{\ \ }\tilde{\xi}=\left(
\begin{array}
[c]{c}%
0\\
\xi
\end{array}
\right)  .
\]

In order to obtain the state feedback form of the optimal control, the
following new decoupling technique is introduced: we conjecture that
$\tilde{X}(\cdot)$ and $\tilde{Y}(\cdot)$ are related by
\[
\tilde{Y}(t)=Q(t)\tilde{X}(t)+\varphi(t)
\]
with $Q(\cdot)\in C([0,T],\mathbb{R}^{(n+m)\times(n+m)})$ and $\varphi
(\cdot)\in L_{\mathbb{F}}^{2}(\Omega;C([0,T],\mathbb{R}^{n+m}))$. Applying the
same steps as in Section 4 of \cite{Yong-06} or Appendix in \cite{Hu-JX}, we
obtain $Q(\cdot)$ satisfies the following matrix ODE%
\begin{equation}
\left\{
\begin{array}
[c]{rl}%
dQ(t)= & -\left[  Q(t)\tilde{A}_{1}(t)+Q(t)\tilde{B}_{1}(t)Q(t)+Q(t)\tilde
{C}_{1}(t)K(t)+\tilde{A}_{3}(t)+\tilde{B}_{3}(t)Q(t)\right. \\
& \left.  +\tilde{C}_{3}(t)K(t)\right]  dt,\\
Q(T)= & \tilde{F},
\end{array}
\right.  \label{eq-p-til}%
\end{equation}
and $(\varphi(\cdot),v(\cdot))\in L_{\mathbb{F}}^{2}(\Omega;C([0,T],\mathbb{R}%
^{n+m}))\times L_{\mathbb{F}}^{2,2}(0,T;\mathbb{R}^{n+m})$ satisfies the
following linear BSDE%
\begin{equation}
\left\{
\begin{array}
[c]{rl}%
d\varphi(t)= & -\left\{  \left[  Q(t)\tilde{B}_{1}(t)+\tilde{B}_{3}(t)+\left(
Q(t)\tilde{C}_{1}(t)+\tilde{C}_{3}(t)\right)  \right.  \right. \\
& \left.  \left.  \cdot(I_{n+m}-Q(t)\tilde{C}_{2}(t))^{-1}Q(t)\tilde{B}%
_{2}(t)\right]  \varphi(t)\right. \\
& \left.  +\left(  Q(t)\tilde{C}_{1}(t)+\tilde{C}_{3}(t)\right)
(I_{n+m}-Q(t)\tilde{C}_{2}(t))^{-1}v(t)\right\}  dt+v(t)dB(t),\\
\varphi(T)= & \tilde{\xi},
\end{array}
\right.  \label{eq-phi}%
\end{equation}
where
\[%
\begin{array}
[c]{rl}%
K(t)= & (I_{n+m}-Q(t)\tilde{C}_{2}(t))^{-1}\left(  Q(t)\tilde{A}%
_{2}(t)+Q(t)\tilde{B}_{2}(t)Q(t)\right)  .
\end{array}
\]

Set%
\[
Q(t)=\left(
\begin{array}
[c]{ccc}%
Q_{1}(t) &  & Q_{2}(t)\\
Q_{3}(t) &  & -Q_{4}(t)
\end{array}
\right)  ,\text{ }K(t)=\left(
\begin{array}
[c]{ccc}%
K_{1}(t) &  & K_{2}(t)\\
K_{3}(t) &  & K_{4}(t)
\end{array}
\right)  ,\text{ }\varphi\left(  \cdot\right)  =\left(
\begin{array}
[c]{c}%
\varphi_{1}(\cdot)\\
\varphi_{2}(\cdot)
\end{array}
\right)  ,
\]%
\[
v\left(  \cdot\right)  =\left(
\begin{array}
[c]{c}%
v_{1}(\cdot)\\
v_{2}(\cdot)
\end{array}
\right)  ,\ \left(
\begin{array}
[c]{ccc}%
J_{1}(t) &  & J_{2}(t)\\
J_{3}(t) &  & J_{4}(t)
\end{array}
\right)  =(I_{n+m}-Q(t)\tilde{C}_{2}(t))^{-1}Q(t)\tilde{B}_{2}(t),
\]%
\[
\text{ }\left(
\begin{array}
[c]{ccc}%
I_{1}(t) &  & I_{2}(t)\\
I_{3}(t) &  & I_{4}(t)
\end{array}
\right)  =(I_{n+m}-Q(t)\tilde{C}_{2}(t))^{-1},
\]
where $Q_{1}(\cdot)$, $K_{1}(\cdot)$, $J_{1}(\cdot)$, $I_{1}(\cdot)$ are
$\mathbb{R}^{n\times n}$-valued, $Q_{2}(\cdot)$, $K_{2}(\cdot)$, $J_{2}%
(\cdot)$, $I_{2}(\cdot)$ are $\mathbb{R}^{n\times m}$-valued, $Q_{3}(\cdot)$,
$K_{3}(\cdot)$, $J_{3}(\cdot)$, $I_{3}(\cdot)$ are $\mathbb{R}^{m\times n}%
$-valued, $Q_{4}(\cdot)$, $K_{4}(\cdot)$, $J_{4}(\cdot)$, $I_{4}(\cdot)$ are
$\mathbb{R}^{m\times m}$-valued, $\varphi_{1}(\cdot)\in L_{\mathbb{F}}%
^{2}(\Omega;C([0,T],\mathbb{R}^{n}))$, $\varphi_{2}(\cdot)\in L_{\mathbb{F}%
}^{2}(\Omega;C([0,T],\mathbb{R}^{m}))$, $v_{1}(\cdot)\in L_{\mathbb{F}}%
^{2,2}(0,T;\mathbb{R}^{n})$, $v_{2}(\cdot)\in L_{\mathbb{F}}^{2,2}%
(0,T;\mathbb{R}^{m})$.

\begin{theorem}
\label{th-opti-cont}Suppose that Assumptions \ref{assum-1}, \ref{assum-bound},
\ref{assum-pos} and \ref{assum-pos-d4} hold. Moreover, suppose that
(\ref{eq-p-til}) has a solution $Q(\cdot)\in C\left(  \left[  0,T\right]
;\mathbb{R}^{(n+m)\times(n+m)}\right)  $ such that $I_{m}+HQ_{4}(0)$ is
invertible and $(I_{n+m}-Q(t)\tilde{C}_{2}(t))^{-1}\in L^{\infty}\left(
0,T;\mathbb{R}^{(n+m)\times(n+m)}\right)  $. Then Problem (\ref{state-lq}%
)-(\ref{cost-lq}) has a unique optimal control
\begin{equation}%
\begin{array}
[c]{rl}%
\bar{u}(t)= & -D_{4}(t)^{-1}\left(  D_{1}(t)^{\intercal}Q_{1}(t)+D_{2}%
(t)^{\intercal}K_{1}(t)\right)  X^{\ast}(t)\\
& -D_{4}(t)^{-1}\left(  D_{1}(t)^{\intercal}Q_{2}(t)+D_{2}(t)^{\intercal}%
K_{2}(t)+D_{3}(t)^{\intercal}\right)  h^{\ast}(t)\\
& -D_{4}(t)^{-1}\left[  D_{1}(t)^{\intercal}\varphi_{1}(t)+D_{2}%
(t)^{\intercal}\left(  J_{1}(t)\varphi_{1}(t)+J_{2}(t)\varphi_{2}(t)\right.
\right. \\
& \ \ \left.  \left.  +I_{1}(t)v_{1}(t)+I_{2}(t)v_{2}(t)\right)  \right]  ,
\end{array}
\label{eq-op-con}%
\end{equation}
where $\tilde{X}^{\ast}(\cdot):=\left(  X^{\ast}(\cdot)^{\intercal},h^{\ast
}(\cdot)^{\intercal}\right)  ^{\intercal}$ is the solution to the following
SDE
\begin{equation}
\left\{
\begin{array}
[c]{rl}%
d\tilde{X}^{\ast}(t)= & \left\{  \left(  \tilde{A}_{1}(t)+\tilde{B}%
_{1}(t)Q(t)+\tilde{C}_{1}(t)K(t)\right)  \tilde{X}^{\ast}(t)\right. \\
& \left.  +\tilde{B}_{1}(t)\varphi(t)+\tilde{C}_{1}(t)(I_{n+m}-Q(t)\tilde
{C}_{2}(t))^{-1}\left[  Q(t)\tilde{B}_{2}(t)\varphi(t)+v(t)\right]
\displaystyle\right\}  dt\\
& +\left\{  \left(  \tilde{A}_{2}(t)+\tilde{B}_{2}(t)Q(t)+\tilde{C}%
_{2}(t)K(t)\right)  \tilde{X}^{\ast}(t)\right. \\
& \left.  +\tilde{B}_{2}(t)\varphi(t)+\tilde{C}_{2}(t)(I_{n+m}-Q(t)\tilde
{C}_{2}(t))^{-1}\left[  Q(t)\tilde{B}_{2}(t)\varphi(t)+v(t)\right]
\displaystyle\right\}  dB(t),\\
\tilde{X}^{\ast}(0)= & \left(  x_{0}^{\intercal},(\left(  I_{m}+HQ_{4}%
(0)\right)  ^{-1}H\left(  Q_{3}(0)x_{0}+\varphi_{2}(0))\right)  ^{\intercal
}\right)  ^{\intercal}.
\end{array}
\right.  \label{eq-op-xh}%
\end{equation}
Furthermore, the solution to (\ref{eq-ham}) with respect to $\bar{u}(\cdot)$
defined in (\ref{eq-op-con}) satisfies
\begin{equation}%
\begin{array}
[c]{cl}%
\bar{X}(t)= & X^{\ast}(t),\text{ \ }h(t)=h^{\ast}(t),\text{ \ }\bar
{Y}(t)=Q_{3}(t)X^{\ast}(t)-Q_{4}(t)h^{\ast}(t)+\varphi_{2}(t),\\
\bar{Z}(t)= & K_{3}(t)X^{\ast}(t)+K_{4}(t)h^{\ast}(t)+J_{3}(t)\varphi
_{1}(t)+J_{4}(t)\varphi_{2}(t)\\
& +I_{3}(t)v_{1}(t)+I_{4}(t)v_{2}(t),\\
m(t)= & Q_{1}(t)X^{\ast}(t)+Q_{2}(t)h^{\ast}(t)+\varphi_{1}(t),\\
n(t)= & K_{1}(t)X^{\ast}(t)+K_{2}(t)h^{\ast}(t)+J_{1}(t)\varphi_{1}%
(t)+J_{2}(t)\varphi_{2}(t)\\
& +I_{1}(t)v_{1}(t)+I_{2}(t)v_{2}(t).
\end{array}
\label{eq-131-1}%
\end{equation}

\end{theorem}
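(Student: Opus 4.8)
The plan is to proceed by verification. The hypotheses already furnish a solution $Q(\cdot)$ of the matrix ODE (\ref{eq-p-til}) and, via the linear BSDE (\ref{eq-phi}), a pair $(\varphi(\cdot),v(\cdot))$; from these data I will manufacture an honest solution of the Hamiltonian system (\ref{eq-ham}) carrying the feedback control (\ref{eq-op-con}), and then appeal to Theorem \ref{th-mp-lq} to promote it to the unique optimal control. First I would note that, under Assumption \ref{assum-bound}, the continuity (hence boundedness) of $Q(\cdot)$ together with the standing hypothesis $(I_{n+m}-Q(t)\tilde{C}_2(t))^{-1}\in L^{\infty}(0,T)$ renders the drift and diffusion coefficients of the closed-loop equation (\ref{eq-op-xh}) bounded, while the inhomogeneous terms assembled from $\varphi$ and $v$ lie in $L_{\mathbb{F}}^{2}$. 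Consequently (\ref{eq-op-xh}) is a plain linear SDE and, once its $\mathcal{F}_0$-measurable initial value is fixed, possesses a unique solution $\tilde{X}^{\ast}(\cdot)=(X^{\ast}(\cdot)^{\intercal},h^{\ast}(\cdot)^{\intercal})^{\intercal}\in L_{\mathbb{F}}^{2}(\Omega;C([0,T],\mathbb{R}^{n+m}))$.

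Next I would set $\tilde{Y}^{\ast}(t):=Q(t)\tilde{X}^{\ast}(t)+\varphi(t)$ and $\tilde{Z}^{\ast}(t):=K(t)\tilde{X}^{\ast}(t)+(I_{n+m}-Q(t)\tilde{C}_2(t))^{-1}[Q(t)\tilde{B}_2(t)\varphi(t)+v(t)]$; splitting these into blocks reproduces precisely the expressions for $\bar{Y},m,\bar{Z},n$ recorded in (\ref{eq-131-1}). Applying It\^{o}'s formula to $Q(t)\tilde{X}^{\ast}(t)+\varphi(t)$ and inserting the ODE (\ref{eq-p-til}), the dynamics (\ref{eq-op-xh}), and the BSDE (\ref{eq-phi}), the drift and diffusion terms regroup so that $(\tilde{X}^{\ast},\tilde{Y}^{\ast},\tilde{Z}^{\ast})$ obeys the forward-backward dynamics of (\ref{eq-xt-til}); this is exactly the computation that yielded (\ref{eq-p-til})--(\ref{eq-phi}), now run in reverse, with the definition of $K$ chosen precisely to cancel the $\tilde{Z}^{\ast}$ terms. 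The terminal conditions follow at once from $Q(T)=\tilde{F}$ and $\varphi(T)=\tilde{\xi}$, which in block form return $\bar{Y}(T)=F\bar{X}(T)+\xi$ and $m(T)=G\bar{X}(T)+F^{\intercal}h(T)$.

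The step I expect to be the main obstacle, and where the fully coupled structure genuinely enters, is the matching of the remaining boundary data at $t=0$. The forward equation pins down $X^{\ast}(0)=x_0$, but the second component $h^{\ast}(0)$ is not free: consistency with (\ref{eq-ham}) demands $h(0)=H\bar{Y}(0)$. Substituting the block relation $\bar{Y}(0)=Q_3(0)x_0-Q_4(0)h^{\ast}(0)+\varphi_2(0)$ together with $h(0)=h^{\ast}(0)$ collapses this requirement to the linear equation $(I_m+HQ_4(0))h^{\ast}(0)=H(Q_3(0)x_0+\varphi_2(0))$, which is uniquely solvable precisely because $I_m+HQ_4(0)$ is assumed invertible; its solution is the second block of the initial datum prescribed in (\ref{eq-op-xh}). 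This inverts the usual LQ situation, where the forward initial value is given outright, and it is the device that renders the two-point boundary value problem (\ref{eq-ham}) consistent.

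Finally I would close the argument. Defining $\bar{u}(t):=-D_4(t)^{-1}(D_1(t)^{\intercal}m(t)+D_2(t)^{\intercal}n(t)+D_3(t)^{\intercal}h(t))$ and substituting the block formulas for $m,n,h$ produces (\ref{eq-op-con}), while boundedness of all coefficients ensures $\bar{u}(\cdot)\in L_{\mathbb{F}}^{2}(0,T;\mathbb{R}^k)$, so $\bar{u}$ is admissible. By construction the sextuple $(\bar{X},\bar{Y},\bar{Z},h,m,n)$ solves (\ref{eq-ham}) and $\bar{u}$ fulfils the optimality relation (\ref{eq-con-ne}); moreover, since under Assumption \ref{assum-1} the system (\ref{eq-ham}) driven by this fixed $\bar{u}$ admits a unique solution, the relations (\ref{eq-131-1}) are forced. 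Because Assumptions \ref{assum-1}, \ref{assum-bound}, \ref{assum-pos} and \ref{assum-pos-d4} are in force, Theorem \ref{th-mp-lq} then certifies $\bar{u}$ as the unique optimal control, which completes the proof.
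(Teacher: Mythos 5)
Your proposal is correct and follows essentially the same route as the paper's proof: solve the Lipschitz BSDE (\ref{eq-phi}) and the closed-loop SDE (\ref{eq-op-xh}), define $\bar{u}$ by the feedback formula, verify by reversing the decoupling (It\^{o}'s formula applied to $Q(t)\tilde{X}^{\ast}(t)+\varphi(t)$, using uniqueness under Assumption \ref{assum-1}) that the constructed tuple solves (\ref{eq-ham}) together with (\ref{eq-con-ne}), and conclude via Theorem \ref{th-mp-lq}. Your write-up is in fact more explicit than the paper's, notably in spelling out how the invertibility of $I_{m}+HQ_{4}(0)$ resolves the boundary condition $h(0)=H\bar{Y}(0)$, a step the paper subsumes under ``reversing the above decoupling technique.''
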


\begin{proof}
By $(I_{n+m}-Q(t)\tilde{C}_{2}(t))^{-1}\in L^{\infty}\left(  0,T;\mathbb{R}%
^{(n+m)\times(n+m)}\right)  $, one has that $K(\cdot)\in L^{\infty}\left(
0,T;\mathbb{R}^{(n+m)\times(n+m)}\right)  $ and (\ref{eq-phi}) is a BSDE with
Lipschitz coefficients. Then (\ref{eq-phi}) has a unique solution $\left(
\varphi(\cdot),v(\cdot)\right)  \in L_{\mathbb{F}}^{2}(\Omega;C([0,T],\newline%
\mathbb{R}^{n+m}))\times$ $L_{\mathbb{F}}^{2,2}(0,T;\mathbb{R}^{n+m})$. It
yields that the stochastic differential equation (\ref{eq-op-xh}) admits a
unique strong solution $\left(  X^{\ast}(\cdot)^{\intercal},h^{\ast}%
(\cdot)^{\intercal}\right)  ^{\intercal}\in$ $L_{\mathbb{F}}^{2}%
(\Omega;C([0,T],\mathbb{R}^{n+m}))$. Thus the control $\bar{u}(\cdot)$ defined
in (\ref{eq-op-con}) is admissible. Putting this $\bar{u}(\cdot)$ into
(\ref{eq-ham}) and reversing the above decoupling technique, it can be
verified that $(\bar{X}(\cdot),$ $h(\cdot),$ $\bar{Y}(\cdot),$ $\bar{Z}%
(\cdot),$ $m(\cdot),$ $n(\cdot))$ defined in (\ref{eq-131-1}) solves
(\ref{eq-ham}) and $\bar{u}(\cdot)$ satisfies (\ref{eq-con-ne}). By Theorem
\ref{th-mp-lq}, this $\bar{u}(\cdot)$ is the unique optimal control. This
completes the proof.
\end{proof}

\begin{remark}
We give a sufficient condition which guarantee the existence of solution to
(\ref{eq-p-til}) in Corollary \ref{Cor-equation Q}.
\end{remark}

\begin{corollary}
\label{cor-1}(i) Under the same assumptions as in Theorem \ref{th-opti-cont},
if $Q_{4}(\cdot)$ in (\ref{eq-p-til}) is invertible on $[0,T)$, then
\[
h(t)=Q_{4}(t)^{-1}Q_{3}(t)\bar{X}(t)-Q_{4}(t)^{-1}\bar{Y}(t)+Q_{4}%
(t)^{-1}\varphi_{2}(t)
\]
and%
\[%
\begin{array}
[c]{rl}%
\bar{u}(t)= & -D_{4}(t)^{-1}\left\{  D_{1}(t)^{\intercal}Q_{1}(t)+D_{2}%
(t)^{\intercal}K_{1}(t)\right. \\
& \left.  +\left[  D_{1}(t)^{\intercal}Q_{2}(t)+D_{2}(t)^{\intercal}%
K_{2}(t)+D_{3}(t)^{\intercal}\right]  Q_{4}(t)^{-1}Q_{3}(t)\right\}  \bar
{X}(t)\\
& +D_{4}(t)^{-1}\left[  D_{1}(t)^{\intercal}Q_{2}(t)+D_{2}(t)^{\intercal}%
K_{2}(t)+D_{3}(t)^{\intercal}\right]  Q_{4}(t)^{-1}\left(  \bar{Y}%
(t)-\varphi_{2}(t)\right) \\
& -D_{4}(t)^{-1}\left[  D_{1}(t)^{\intercal}\varphi_{1}(t)+D_{2}%
(t)^{\intercal}\left(  J_{1}(t)\varphi_{1}(t)+J_{2}(t)\varphi_{2}(t)\right.
\right. \\
& \left.  \left.  +I_{1}(t)v_{1}(t)+I_{2}(t)v_{2}(t)\right)  \right]  .
\end{array}
\]
(ii) If $\xi=0$, then the optimal control for the fully coupled
forward-backward control system in Theorem \ref{th-opti-cont} depends only on
$(\bar{X}(\cdot),h(\cdot))$. Moreover, $h(\cdot)$ has the following
closed-form:%
\[%
\begin{array}
[c]{rl}%
h(t)= & \left(  I_{m}+HQ_{4}(0)\right)  ^{-1}H\left(  Q_{3}(0)x_{0}%
+\varphi_{2}(0)\right) \\
& +\Phi(t)\int_{0}^{t}\Phi(s)^{-1}\left(  b_{1}(s)-a_{2}(s)b_{2}(s)\right)
\bar{X}(s)ds\\
& +\Phi(t)\int_{0}^{t}\Phi(s)^{-1}b_{2}(s)\bar{X}(s)dB(s),
\end{array}
\]
where $a_{1}(t)=B_{3}(t)^{\intercal}-B_{4}(t)Q_{4}(t)+B_{1}(t)^{\intercal
}Q_{2}(t)+B_{2}(t)^{\intercal}K_{2}(t)$, $b_{1}(t)=B_{4}(t)Q_{3}%
(t)+B_{1}(t)^{\intercal}Q_{1}(t)+B_{2}(t)^{\intercal}K_{1}(t)$, $a_{2}%
(t)=C_{3}(t)^{\intercal}+C_{4}(t)K_{4}(t)+C_{1}(t)^{\intercal}Q_{2}%
(t)+C_{2}(t)^{\intercal}K_{2}(t)$, $b_{2}(t)=C_{4}(t)K_{3}(t)+C_{1}%
(t)^{\intercal}Q_{1}(t)+C_{2}(t)^{\intercal}K_{1}(t)$, and $\Phi(\cdot)$ is
the solution of the following linear equation:%
\[%
\begin{array}
[c]{rl}%
d\Phi(t)= & a_{1}(t)\Phi(t)dt+a_{2}(t)\Phi(t)dB(t),\text{ }\Phi(0)=I_{m}.
\end{array}
\]

\end{corollary}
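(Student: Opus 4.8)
The plan is to treat the two parts separately; both are consequences of the representation formulas (\ref{eq-op-con}) and (\ref{eq-131-1}) established in Theorem \ref{th-opti-cont}, combined with the structure of the decoupling.

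For part (i) I would argue by direct algebraic substitution. From the third identity in (\ref{eq-131-1}), using $\bar{X}(t)=X^{\ast}(t)$ and $h(t)=h^{\ast}(t)$, one has $\bar{Y}(t)=Q_{3}(t)\bar{X}(t)-Q_{4}(t)h(t)+\varphi_{2}(t)$. Since $Q_{4}(\cdot)$ is invertible on $[0,T)$, solving this relation for $h(t)$ gives the stated formula $h(t)=Q_{4}(t)^{-1}Q_{3}(t)\bar{X}(t)-Q_{4}(t)^{-1}\bar{Y}(t)+Q_{4}(t)^{-1}\varphi_{2}(t)$. Substituting this into the feedback law (\ref{eq-op-con}), in which $h^{\ast}(t)=h(t)$ enters linearly, I would then collect the coefficients of $\bar{X}(t)$ and of $\bar{Y}(t)$: the $\bar{X}(t)$-coefficient absorbs both the direct $X^{\ast}(t)$ term and the $Q_{4}^{-1}Q_{3}$ contribution entering through $h(t)$, while the $\varphi_{2}(t)$ piece inside $h(t)$ combines with the $\bar{Y}(t)$ term to yield the factor $Q_{4}(t)^{-1}(\bar{Y}(t)-\varphi_{2}(t))$. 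This step is routine and I expect no difficulty.

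For part (ii), the key first observation is that $\xi=0$ forces $\tilde{\xi}=(0^{\intercal},\xi^{\intercal})^{\intercal}=0$, so the BSDE (\ref{eq-phi}) becomes a homogeneous linear BSDE with Lipschitz coefficients and zero terminal value; by uniqueness $\varphi(\cdot)\equiv0$ and $v(\cdot)\equiv0$, hence $\varphi_{1}=\varphi_{2}=v_{1}=v_{2}=0$. Therefore the last bracket of (\ref{eq-op-con}) drops out and $\bar{u}(t)$ depends only on $X^{\ast}(t)=\bar{X}(t)$ and $h^{\ast}(t)=h(t)$, while (\ref{eq-131-1}) collapses to $\bar{Y}=Q_{3}\bar{X}-Q_{4}h$, $\bar{Z}=K_{3}\bar{X}+K_{4}h$, $m=Q_{1}\bar{X}+Q_{2}h$ and $n=K_{1}\bar{X}+K_{2}h$. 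To obtain the closed form of $h(\cdot)$ I would substitute these four simplified identities into the $dh$-equation of the Hamiltonian system (\ref{eq-hmn}). Grouping the drift into its $h$- and $\bar{X}$-components reproduces exactly $a_{1}(t)$ and $b_{1}(t)$, and grouping the diffusion likewise reproduces $a_{2}(t)$ and $b_{2}(t)$, so that $h$ satisfies the linear SDE $dh(t)=(a_{1}(t)h(t)+b_{1}(t)\bar{X}(t))dt+(a_{2}(t)h(t)+b_{2}(t)\bar{X}(t))dB(t)$, with initial value read off from (\ref{eq-op-xh}). Introducing the fundamental solution $\Phi$ of $d\Phi=a_{1}\Phi\,dt+a_{2}\Phi\,dB$, $\Phi(0)=I_{m}$, I would then apply It\^{o}'s formula to $\Phi(t)^{-1}h(t)$ and integrate to arrive at the stated representation.

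The main obstacle is the bookkeeping in this last step, specifically the It\^{o} correction. In computing $d(\Phi^{-1}h)$ the cross-variation $d(\Phi^{-1})\cdot dh$ produces $-\Phi^{-1}a_{2}(a_{2}h+b_{2}\bar{X})dt$; here the $a_{2}^{2}h$ part cancels the It\^{o} correction carried by $d(\Phi^{-1})$, but the residual $-\Phi^{-1}a_{2}b_{2}\bar{X}\,dt$ is exactly what turns the $\bar{X}$-drift coefficient into $b_{1}-a_{2}b_{2}$ in the $ds$-integral. Confirming that this correction equals $a_{2}b_{2}$ in the correct (non-commuting, matrix) order is the one place where an error could easily slip in, so I would verify this It\^{o} computation most carefully.
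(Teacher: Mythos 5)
Your approach is exactly the one the paper intends: the paper omits the proof, asserting the corollary "can be directly derived from Theorem \ref{th-opti-cont}", and that is what you do. Part (i) is correct: solving $\bar{Y}(t)=Q_{3}(t)\bar{X}(t)-Q_{4}(t)h(t)+\varphi_{2}(t)$ from (\ref{eq-131-1}) for $h(t)$ and substituting into (\ref{eq-op-con}) reproduces the stated feedback law term by term. In part (ii), the reduction is also correct: $\xi=0$ gives $\tilde{\xi}=0$, so by uniqueness for the Lipschitz linear BSDE (\ref{eq-phi}) one gets $(\varphi,v)\equiv(0,0)$; then the collapsed identities from (\ref{eq-131-1}) inserted into the $dh$-equation of (\ref{eq-hmn}) yield $dh(t)=\left(a_{1}(t)h(t)+b_{1}(t)\bar{X}(t)\right)dt+\left(a_{2}(t)h(t)+b_{2}(t)\bar{X}(t)\right)dB(t)$ with precisely the stated $a_{1},b_{1},a_{2},b_{2}$, and your It\^{o} bookkeeping for $d(\Phi^{-1}h)$ (the $a_{2}^{2}h$ cancellation and the residual $-a_{2}b_{2}\bar{X}$ correction, in the correct matrix order) is right.

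One substantive caveat: your (correct) variation-of-constants computation produces $h(t)=\Phi(t)\left[h(0)+\int_{0}^{t}\Phi(s)^{-1}\left(b_{1}(s)-a_{2}(s)b_{2}(s)\right)\bar{X}(s)ds+\int_{0}^{t}\Phi(s)^{-1}b_{2}(s)\bar{X}(s)dB(s)\right]$, i.e.\ the initial value $h(0)=\left(I_{m}+HQ_{4}(0)\right)^{-1}H\left(Q_{3}(0)x_{0}+\varphi_{2}(0)\right)$ is also propagated by $\Phi(t)$ (and here $\varphi_{2}(0)=0$ since $\xi=0$). The formula printed in the corollary carries the initial term \emph{without} the factor $\Phi(t)$; taken literally it does not solve the SDE you derived, since differentiating it gives drift $a_{1}(t)\left(h(t)-h(0)\right)+b_{1}(t)\bar{X}(t)$ rather than $a_{1}(t)h(t)+b_{1}(t)\bar{X}(t)$. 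So your claim to "arrive at the stated representation" is not accurate verbatim: your derivation arrives at the corrected formula and in fact exposes a typo (a missing $\Phi(t)$ on the first term) in the statement. You should flag this discrepancy explicitly rather than assert agreement.
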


This corollary can be directly derived from Theorem \ref{th-opti-cont}. So we
omit the proof.

\begin{remark}
By Corollary \ref{cor-1}, the optimal control at time $t$ depends on the
entire past history of the state process $X(\cdot)$. This is different from
the classical stochastic LQ problems. Furthermore, if $Q_{4}(\cdot)$ in
(\ref{eq-p-til}) is invertible on $[0,T)$, then the optimal control at time
$t$ will depend only on the current state pair $(\bar{X}(t),\bar{Y}(t)).$
\end{remark}

\subsection{FBLQ problem with indefinite control weight cost\label{indef}}

In this subsection, we relax the assumption $D_{4}>0$ and deduce formally the
following non-Riccati-type equations (\ref{eq-p1}), (\ref{eq-p3}) and
(\ref{eq-p4}) which play an important role in solving the FBLQ problem (see
Section \ref{section-feedback}).

Set
\begin{equation}%
\begin{array}
[c]{cc}%
m(t)= & P_{1}(t)\bar{X}(t)+P_{2}(t)^{\intercal}h(t)+\varphi_{1}(t),\\
\bar{Y}(t)= & P_{2}(t)\bar{X}(t)-P_{3}(t)h(t)+\varphi_{2}(t),
\end{array}
\label{eq-Y-m}%
\end{equation}
where $(\bar{X}(\cdot),\bar{Y}(\cdot),\bar{Z}(\cdot),h(\cdot),m(\cdot
),n(\cdot))$ is the solution to Hamiltonian system (\ref{eq-ham}),
$P_{i}(\cdot),i=1,2,3$ satisfy some ODEs which will be determined later, and
$\varphi\left(  \cdot\right)  =\left(  \varphi_{1}(\cdot)^{\intercal}%
,\varphi_{2}(\cdot)^{\intercal}\right)  ^{\intercal}$, $v\left(  \cdot\right)
=\left(  v_{1}(\cdot)^{\intercal},v_{2}(\cdot)^{\intercal}\right)
^{\intercal}$ satisfies the following BSDE%
\[
d\varphi_{1}(t)=-\gamma_{1}(t)dt+v_{1}(t)dB(t),\text{ }d\varphi_{2}%
(t)=-\gamma_{2}(t)dt+v_{2}(t)dB(t).
\]
Applying It\^{o}'s formula to $\bar{Y}(\cdot)$, $m(\cdot)$ in (\ref{eq-Y-m})
and comparing with the diffusion terms of the equation (\ref{eq-ham}), we have%
\begin{equation}%
\begin{array}
[c]{l}%
\bar{Z}(t)\\
=L_{1}(t)^{-1}\left\{  (P_{2}(t)A_{2}(t)+P_{2}(t)B_{2}(t)P_{2}(t)-P_{3}%
(t)C_{1}(t)^{\intercal}P_{1}(t))\bar{X}(t)\right. \\
\ \ -(P_{2}(t)B_{2}(t)P_{3}(t)+P_{3}(t)C_{3}(t)^{\intercal}+P_{3}%
(t)C_{1}(t)^{\intercal}P_{2}(t)^{\intercal})h(t)+P_{2}(t)D_{2}(t)\bar{u}(t)\\
\ \ \left.  +P_{2}(t)B_{2}(t)\varphi_{2}(t)-P_{3}(t)C_{1}(t)^{\intercal
}\varphi_{1}(t)-P_{3}(t)C_{2}(t)^{\intercal}n(t)+v_{2}(t)\right\}  ,
\end{array}
\label{eq-Zbar}%
\end{equation}%
\begin{equation}%
\begin{array}
[c]{l}%
0=-(I_{n}-P_{2}(t)^{\intercal}C_{2}(t)^{\intercal})n(t)\\
\text{ \ }+(P_{1}(t)A_{2}(t)+P_{1}(t)B_{2}(t)P_{2}(t)+P_{2}(t)^{\intercal
}C_{1}(t)^{\intercal}P_{1}(t))\bar{X}(t)\\
\text{ \ }+(P_{2}(t)^{\intercal}C_{3}(t)^{\intercal}+P_{2}(t)^{\intercal}%
C_{1}(t)^{\intercal}P_{2}(t)^{\intercal}-P_{1}(t)B_{2}(t)P_{3}(t))h(t)+P_{1}%
(t)D_{2}(t)\bar{u}(t)\\
\text{ \ }+(P_{1}(t)C_{2}(t)+P_{1}(t)B_{2}(t)\varphi_{2}(t)+P_{2}%
(t)^{\intercal}C_{4}(t))\bar{Z}(t)+P_{2}(t)^{\intercal}C_{1}(t)^{\intercal
}\varphi_{1}(t)+v_{1}(t),
\end{array}
\label{eq-n}%
\end{equation}
where
\begin{equation}
L_{1}(t)=I_{m}-P_{2}(t)C_{2}(t)+P_{3}(t)C_{4}(t). \label{eq-l1}%
\end{equation}
Combining (\ref{eq-Zbar}) and (\ref{eq-n}), we have
\begin{equation}%
\begin{array}
[c]{rl}%
n(t)= & L_{2}(t)^{-1}(L_{3}(t)\bar{X}(t)+L_{4}(t)h(t)+S_{1}(t)D_{2}(t)\bar
{u}(t)+S_{2}(t)),
\end{array}
\label{eq-n-u}%
\end{equation}
where
\begin{equation}%
\begin{array}
[c]{rl}%
L_{2}(t)= & I_{n}-P_{2}(t)^{\intercal}C_{2}(t)^{\intercal}+\left(
P_{1}(t)C_{2}(t)+P_{2}(t)^{\intercal}C_{4}(t)\right)  L_{1}(t)^{-1}%
P_{3}(t)C_{2}(t)^{\intercal},\\
L_{3}(t)= & P_{1}(t)A_{2}(t)+P_{1}(t)B_{2}(t)P_{2}(t)+P_{2}(t)^{\intercal
}C_{1}(t)^{\intercal}P_{1}(t)\\
& +\left(  P_{2}(t)^{\intercal}C_{4}(t)+P_{1}(t)C_{2}(t)\right)  L_{1}%
(t)^{-1}\\
& \ \ \cdot(P_{2}(t)A_{2}(t)+P_{2}(t)B_{2}(t)P_{2}(t)-P_{3}(t)C_{1}%
(t)^{\intercal}P_{1}(t)),\\
L_{4}(t)= & P_{2}(t)^{\intercal}C_{3}(t)^{\intercal}+P_{2}(t)^{\intercal}%
C_{1}(t)^{\intercal}P_{2}(t)^{\intercal}-P_{1}(t)B_{2}(t)P_{3}(t)\\
& -\left(  P_{2}(t)^{\intercal}C_{4}(t)+P_{1}(t)C_{2}(t)\right)  L_{1}%
(t)^{-1}\\
& \ \ \cdot(P_{2}(t)B_{2}(t)P_{3}(t)+P_{3}(t)C_{3}(t)^{\intercal}%
+P_{3}(t)C_{1}(t)^{\intercal}P_{2}(t)^{\intercal}),\\
S_{1}(t)= & P_{1}(t)+\left(  P_{2}(t)^{\intercal}C_{4}(t)+P_{1}(t)C_{2}%
(t)\right)  L_{1}(t)^{-1}P_{2}(t),\\
S_{2}(t)= & P_{1}(t)B_{2}(t)\varphi_{2}(t)+P_{2}(t)^{\intercal}C_{1}%
(t)^{\intercal}\varphi_{1}(t)+v_{1}(t)\\
& +(P_{1}(t)C_{2}(t)+P_{2}(t)^{\intercal}C_{4}(t))L_{1}(t)^{-1}\\
& \ \ \cdot\lbrack P_{2}(t)B_{2}(t)\varphi_{2}(t)-P_{3}(t)C_{1}(t)^{\intercal
}\varphi_{1}(t)+v_{2}(t)].
\end{array}
\label{def-lr}%
\end{equation}
Putting them into (\ref{eq-con-ne}), we obtain%
\begin{equation}
\bar{u}(t)=L_{6}(t)\bar{X}(t)+L_{7}(t)h(t)+S_{3}(t), \label{eq-u-infi}%
\end{equation}
where%
\begin{equation}%
\begin{array}
[c]{rl}%
L_{5}(t)= & D_{4}(t)+D_{2}(t)^{\intercal}L_{2}(t)^{-1}S_{1}(t)D_{2}(t),\\
L_{6}(t)= & -L_{5}(t)^{-1}(D_{1}(t)^{\intercal}P_{1}(t)+D_{2}(t)^{\intercal
}L_{2}(t)^{-1}L_{3}(t)),\\
L_{7}(t)= & -L_{5}(t)^{-1}(D_{1}(t)^{\intercal}P_{2}(t)^{\intercal}%
+D_{2}(t)^{\intercal}L_{2}(t)^{-1}L_{4}(t)+D_{3}(t)^{\intercal}),\\
S_{3}(t)= & -L_{5}(t)^{-1}\left[  D_{1}(t)^{\intercal}\varphi_{1}%
(t)+D_{2}(t)^{\intercal}L_{2}(t)^{-1}S_{2}(t)\right]  .
\end{array}
\label{def-L1}%
\end{equation}

\begin{remark}
Instead of requiring $D_{4}>0$, here we assume that $L_{5}(t)$ is invertible.
\end{remark}

From (\ref{eq-Zbar})-(\ref{eq-u-infi}), we deduce that%
\begin{equation}%
\begin{array}
[c]{cc}%
n(t)= & L_{8}(t)\bar{X}(t)+L_{9}(t)h(t)+S_{4}(t),\\
\bar{Z}(t)= & L_{10}(t)\bar{X}(t)+L_{11}(t)h(t)+S_{5}(t),
\end{array}
\label{eq-nz}%
\end{equation}
where
\begin{equation}%
\begin{array}
[c]{rl}%
L_{8}(t)= & L_{2}(t)^{-1}\left(  L_{3}(t)+S_{1}(t)D_{2}(t)L_{6}(t)\right)  ,\\
L_{9}(t)= & L_{2}(t)^{-1}\left(  L_{4}(t)+S_{1}(t)D_{2}(t)L_{7}(t)\right)  ,\\
S_{4}(t)= & L_{2}(t)^{-1}\left[  S_{1}(t)D_{2}(t)S_{3}(t)+S_{2}(t)\right]  ,\\
L_{10}(t)= & L_{1}(t)^{-1}\left[  P_{2}(t)A_{2}(t)+P_{2}(t)B_{2}%
(t)P_{2}(t)-P_{3}(t)C_{1}(t)^{\intercal}P_{1}(t)\right. \\
& \left.  -P_{3}(t)C_{2}(t)^{\intercal}L_{8}(t)+P_{2}(t)D_{2}(t)L_{6}%
(t)\right]  ,\\
L_{11}(t)= & L_{1}(t)^{-1}\left[  P_{2}(t)D_{2}(t)L_{7}(t)-P_{3}%
(t)C_{2}(t)^{\intercal}L_{9}(t)\right. \\
& \left.  -P_{2}(t)B_{2}(t)P_{3}(t)-P_{3}(t)C_{3}(t)^{\intercal}-P_{3}%
(t)C_{1}(t)^{\intercal}P_{2}(t)^{\intercal}\right]  ,\\
S_{5}(t)= & L_{1}(t)^{-1}[P_{2}(t)D_{2}(t)S_{3}(t)-P_{3}(t)C_{2}%
(t)^{\intercal}S_{4}(t)+P_{2}(t)B_{2}(t)\varphi_{2}(t)\\
& -P_{3}(t)C_{1}(t)^{\intercal}\varphi_{1}(t)+v_{2}(t)].
\end{array}
\label{def-l8}%
\end{equation}

Now we determine the equations satisfied by $P_{i}(\cdot)$, $i=1,2,3$. We
first put (\ref{eq-Y-m}), (\ref{eq-u-infi}) and (\ref{eq-nz}) into
(\ref{eq-ham}) and obtain a new form of the Hamiltonian system (\ref{eq-ham}).
Then applying It\^{o}'s formula to $m(t)$ in (\ref{eq-Y-m}) and comparing with
the drift term of the new form of (\ref{eq-ham}), we have
\begin{equation}%
\begin{array}
[c]{l}%
\dot{P}_{1}(t)\bar{X}(t)+P_{1}(t)\left\{  A_{1}(t)\bar{X}(t)+B_{1}(t)\left[
P_{2}(t)\bar{X}(t)-P_{3}(t)h(t)+\varphi_{2}(t)\right]  \right. \\
+C_{1}(t)\left[  L_{10}(t)\bar{X}(t)+L_{11}(t)h(t)+S_{5}(t)\right]
+D_{1}(t)\left[  L_{6}(t)\bar{X}(t)+L_{7}(t)h(t)+S_{3}(t)\right]  \}\\
+\dot{P}_{2}(t)^{\intercal}h(t)+P_{2}(t)^{\intercal}\left\{  B_{3}%
(t)^{\intercal}h(t)+B_{1}(t)^{\intercal}\left[  P_{1}(t)\bar{X}(t)+P_{2}%
(t)^{\intercal}h(t)+\varphi_{1}(t)\right]  \right. \\
+B_{2}(t)^{\intercal}\left[  L_{8}(t)\bar{X}(t)+L_{9}(t)h(t)+S_{4}(t)\right]
+B_{4}(t)\left[  P_{2}(t)\bar{X}(t)-P_{3}(t)h(t)+\varphi_{2}(t)\right]  \}\\
-\gamma_{1}(t)\\
=-\left\{  A_{3}(t)^{\intercal}h(t)+A_{1}(t)^{\intercal}\left[  P_{1}%
(t)\bar{X}(t)+P_{2}(t)^{\intercal}h(t)+\varphi_{1}(t)\right]  \right. \\
\left.  +A_{2}(t)^{\intercal}\left[  L_{8}(t)\bar{X}(t)+L_{9}(t)h(t)+S_{4}%
(t)\right]  +A_{4}(t)\bar{X}(t)\right\}  .
\end{array}
\label{eq-m-equal}%
\end{equation}
Hence, $P_{1}(\cdot)$, $P_{2}(\cdot)^{\intercal}$ and $\varphi_{2}(\cdot)$
should be solutions of
\begin{equation}
\left\{
\begin{array}
[c]{l}%
dP_{1}(t)\\
=-\{P_{1}(t)A_{1}(t)+P_{1}(t)B_{1}(t)P_{2}(t)+P_{1}(t)C_{1}(t)L_{10}%
(t)+P_{1}(t)D_{1}(t)L_{6}(t)\\
\ \ +P_{2}(t)^{\intercal}B_{1}(t)^{\intercal}P_{1}(t)+P_{2}(t)^{\intercal
}B_{2}(t)^{\intercal}L_{8}(t)+P_{2}(t)^{\intercal}B_{4}(t)P_{2}(t)+A_{4}(t)\\
\ \ +A_{1}(t)^{\intercal}P_{1}(t)+A_{2}(t)^{\intercal}L_{8}(t)\}dt,\\
P_{1}(T)=G,
\end{array}
\right.  \label{eq-p1}%
\end{equation}%
\begin{equation}
\left\{
\begin{array}
[c]{l}%
dP_{2}(t)^{\intercal}\\
=-\{-P_{1}B_{1}(t)P_{3}(t)+P_{1}(t)C_{1}(t)L_{11}(t)+P_{1}(t)D_{1}%
(t)L_{7}(t)+P_{2}(t)^{\intercal}B_{3}(t)^{\intercal}\\
\ \ +P_{2}(t)^{\intercal}B_{1}(t)^{\intercal}P_{2}(t)^{\intercal}%
+P_{2}(t)^{\intercal}B_{2}(t)^{\intercal}L_{9}(t)-P_{2}(t)^{\intercal}%
B_{4}(t)P_{3}(t)\\
\ \ +A_{3}(t)^{\intercal}+A_{1}(t)^{\intercal}P_{2}(t)^{\intercal}%
+A_{2}(t)^{\intercal}L_{9}(t)\}dt,\\
P_{2}(T)^{\intercal}=F^{\intercal},
\end{array}
\right.  \label{eq-p2}%
\end{equation}%
\begin{equation}
\left\{
\begin{array}
[c]{rl}%
d\varphi_{1}(t)= & -\left\{  P_{1}(t)\left[  B_{1}(t)\varphi_{2}%
(t)+C_{1}(t)S_{5}(t)+D_{1}(t)S_{3}(t)\right]  \right. \\
& +P_{2}(t)^{\intercal}\left[  B_{1}(t)^{\intercal}\varphi_{1}(t)+B_{2}%
(t)^{\intercal}S_{4}(t)+B_{4}(t)\varphi_{2}(t)\right] \\
& \left.  +A_{1}(t)^{\intercal}\varphi_{1}(t)+A_{2}(t)^{\intercal}%
S_{4}(t)\right\}  dt+v_{1}(t)dB(t),\\
\varphi_{1}(T)= & 0
\end{array}
\right.  \label{eq-phi1}%
\end{equation}
respectively. Applying It\^{o}'s formula to $\bar{Y}(t)$ in (\ref{eq-Y-m}) and
comparing with the drift term of the new form of (\ref{eq-ham}), we have%
\begin{equation}%
\begin{array}
[c]{l}%
\dot{P}_{2}(t)\bar{X}(t)+P_{2}(t)\{A_{1}(t)\bar{X}(t)+B_{1}(t)\left[
P_{2}(t)\bar{X}(t)-P_{3}(t)h(t)+\varphi_{2}(t)\right] \\
+C_{1}(t)\left[  L_{10}(t)\bar{X}(t)+L_{11}(t)h(t)+S_{5}(t)\right]
+D_{1}(t)\left[  L_{6}(t)\bar{X}(t)+L_{7}(t)h(t)+S_{3}(t)\right]  \}\\
-\dot{P}_{3}(t)h(t)-P_{3}(t)\{B_{3}(t)^{\intercal}h(t)+B_{1}(t)^{\intercal
}\left[  P_{1}(t)\bar{X}(t)+P_{2}(t)^{\intercal}h(t)+\varphi_{1}(t)\right] \\
+B_{2}(t)^{\intercal}\left[  L_{8}(t)\bar{X}(t)+L_{9}(t)h(t)+S_{4}(t)\right]
+B_{4}(t)\left[  P_{2}(t)\bar{X}(t)-P_{3}(t)h(t)+\varphi_{2}(t)\right]  \}\\
-\gamma_{2}(t)\\
=-\{A_{3}(t)\bar{X}(t)+B_{3}(t)\left[  P_{2}(t)\bar{X}(t)-P_{3}(t)h(t)+\varphi
_{2}(t)\right] \\
\ \ +C_{3}(t)\left[  L_{10}(t)\bar{X}(t)+L_{11}(t)h(t)+S_{5}(t)\right] \\
\ \ +D_{3}(t)\left[  L_{6}(t)\bar{X}(t)+L_{7}(t)h(t)+S_{3}(t)\right]  \}.
\end{array}
\label{eq-hmn2}%
\end{equation}
$P_{2}(\cdot)$, $P_{3}(\cdot)$ and $\varphi_{2}(\cdot)$ should be solutions of%
\begin{equation}
\left\{
\begin{array}
[c]{l}%
dP_{2}(t)\\
=-\{P_{2}(t)A_{1}(t)+P_{2}(t)B_{1}(t)P_{2}(t)+P_{2}(t)C_{1}(t)L_{10}(t)\\
\ \ +P_{2}(t)D_{1}(t)L_{6}(t)-P_{3}(t)B_{1}(t)^{\intercal}P_{1}(t)-P_{3}%
(t)B_{2}(t)^{\intercal}L_{8}(t)\\
\ \ -P_{3}(t)B_{4}(t)P_{2}(t)+A_{3}(t)+B_{3}(t)P_{2}(t)+C_{3}(t)L_{10}%
(t)+D_{3}(t)L_{6}(t)\}dt,\\
P_{2}(T)=F,
\end{array}
\right.  \label{eq-p3}%
\end{equation}%
\begin{equation}
\left\{
\begin{array}
[c]{l}%
dP_{3}(t)\\
=-\{P_{2}(t)B_{1}(t)P_{3}(t)-P_{2}(t)C_{1}(t)L_{11}(t)-P_{2}(t)D_{1}%
(t)L_{7}(t)+P_{3}(t)B_{3}(t)^{\intercal}\\
\ \ +P_{3}(t)B_{1}(t)^{\intercal}P_{2}(t)^{\intercal}+P_{3}(t)B_{2}%
(t)^{\intercal}L_{9}(t)-P_{3}(t)B_{4}(t)P_{3}(t)\\
\ \ +B_{3}(t)P_{3}(t)-C_{3}(t)L_{11}(t)-D_{3}(t)L_{7}(t)\}dt,\\
P_{3}(T)=0,
\end{array}
\right.  \label{eq-p4}%
\end{equation}%
\begin{equation}
\left\{
\begin{array}
[c]{rl}%
d\varphi_{2}(t)= & -\left\{  P_{2}(t)\left[  B_{1}(t)\varphi_{2}%
(t)+C_{1}(t)S_{5}(t)+D_{1}(t)S_{3}(t)\right]  \right. \\
& -P_{3}(t)\left[  B_{1}(t)^{\intercal}\varphi_{1}(t)+B_{2}(t)^{\intercal
}S_{4}(t)+B_{4}(t)\varphi_{2}(t)\right] \\
& \left.  +B_{3}(t)\varphi_{2}(t)+C_{3}(t)S_{5}(t)+D_{3}(t)S_{3}(t)\right\}
dt+v_{2}(t)dB(t),\\
\varphi_{2}(T)= & \xi
\end{array}
\right.  \label{eq-phi2}%
\end{equation}
respectively. It can be verified that the equation (\ref{eq-p1}),
(\ref{eq-p4}) are symmetric and (\ref{eq-p3}) is indeed the transpose of
(\ref{eq-p2}).

\begin{remark}
\label{re-d4}If $D_{4}>0$ and $C_{4}\geq0$, then the following relations
hold:
\[%
\begin{array}
[c]{rl}%
Q_{1}(t)=P_{1}(t), & Q_{2}(t)=P_{2}(t)^{\intercal},\\
Q_{3}(t)=P_{2}(t), & Q_{4}(t)=P_{3}(t).
\end{array}
\]

\end{remark}

\section{Non-Riccati-type equations}

In this section, we study the existence and uniqueness results for solutions
to non-Riccati-type equations (\ref{eq-p1}), (\ref{eq-p3}) and (\ref{eq-p4}).

\subsection{Auxiliary Riccati-type equations}

Our aim of this subsection is to reveal the origin of the following auxiliary
Riccati equation (\ref{eq-rec}) and (\ref{eq-phi-til}). Hence we will present
the material in this subsection in an informal way although they can be
verified rigorously.

We first introduce an auxiliary stochastic LQ problem which leads to a
Riccati-type equation for $\tilde{P}(\cdot)$. Then the relations between
$P(\cdot)$ and $\tilde{P}(\cdot)$ are deduced and with the help of good
properties of $\tilde{P}(\cdot)$, we will obtain the existence results for the
solutions of (\ref{eq-m-equal})-(\ref{eq-phi2}).

Inspired by \cite{Ma-Y, Koh-Z, Lim-Z}, for the FBLQ problem (\ref{state-lq}%
)-(\ref{cost-lq}), we regard the BSDE as a controlled forward SDE and the term
$Z(\cdot)$ as a control. Thus, it becomes a forward LQ problem. Set $\tilde
{X}(t)=(X(t)^{\intercal},Y(t)^{\intercal})^{\intercal}$ and $\tilde
{u}(t)=(u(t)^{\intercal},Z(t)^{\intercal})^{\intercal}$. The state equation
becomes
\begin{equation}%
\begin{array}
[c]{rl}%
d\tilde{X}(t)= & \left[  \tilde{A}(t)\tilde{X}(t)+\tilde{B}(t)\tilde
{u}(t)\right]  dt+\left[  \tilde{C}(t)\tilde{X}(t)+\tilde{D}(t)\tilde
{u}(t)\right]  dB(t),
\end{array}
\label{eq-pro-x}%
\end{equation}
and the cost functional becomes
\begin{equation}
J(u(\cdot),Z(\cdot))=\frac{1}{2}\mathbb{E}\left[  \displaystyle\int_{0}%
^{T}\left(  \tilde{X}(t)^{\intercal}\tilde{Q}(t)\tilde{X}(t)+\tilde
{u}(t)^{\intercal}\tilde{R}(t)\tilde{u}(t)\right)  dt+\left\langle
GX(T),X(T)\right\rangle \right]  , \label{eq-pro-J}%
\end{equation}
where%
\[%
\begin{array}
[c]{ll}%
\tilde{A}(t)=\left(
\begin{array}
[c]{ccc}%
A_{1}(t) &  & B_{1}(t)\\
-A_{3}(t) &  & -B_{3}(t)
\end{array}
\right)  , & \tilde{B}(t)=\left(
\begin{array}
[c]{ccc}%
D_{1}(t) &  & C_{1}(t)\\
-D_{3}(t) &  & -C_{3}(t)
\end{array}
\right)  ,
\end{array}
\]%
\[%
\begin{array}
[c]{ll}%
\tilde{C}(t)=\left(
\begin{array}
[c]{ccc}%
A_{2}(t) &  & B_{2}(t)\\
0 &  & 0
\end{array}
\right)  , & \tilde{D}(t)=\left(
\begin{array}
[c]{ccc}%
D_{2}(t) &  & C_{2}(t)\\
0 &  & I_{m}%
\end{array}
\right)  ,
\end{array}
\]%
\[%
\begin{array}
[c]{ll}%
\tilde{Q}(t)=\left(
\begin{array}
[c]{ccc}%
A_{4}(t) &  & 0\\
0 &  & B_{4}(t)
\end{array}
\right)  , & \tilde{R}(t)=\left(
\begin{array}
[c]{ccc}%
D_{4}(t) &  & 0\\
0 &  & C_{4}(t)
\end{array}
\right)  .
\end{array}
\]

Now we solve the above LQ problem by the completion-of-squares technique
similar as in Theorem 3.1 in \cite{Chen-Z}. Suppose that $(\tilde{\varphi
}(\cdot),\tilde{v}(\cdot))$ satisfies the following BSDE%
\begin{equation}%
\begin{array}
[c]{rl}%
d\tilde{\varphi}(t)= & -\tilde{\gamma}(t)dt+\tilde{v}(t)dB(t),
\end{array}
\label{eq-phi-til}%
\end{equation}
where $\tilde{\gamma}(\cdot)$ will be determined later. For a function
$\tilde{P}(\cdot)$ to be determined, applying It\^{o}'s formula to
\[
\left(  \tilde{X}(t)-\tilde{\varphi}(t)\right)  ^{\intercal}\tilde
{P}(t)\left(  \tilde{X}(t)-\tilde{\varphi}(t)\right)  +\int_{0}^{t}\left(
\tilde{X}(s)^{\intercal}\tilde{Q}(s)\tilde{X}(s)+\tilde{u}(s)^{\intercal
}\tilde{R}(s)\tilde{u}(s)\right)  ds,
\]
we have
\[%
\begin{array}
[c]{l}%
d\left[  \left(  \tilde{X}(t)-\tilde{\varphi}(t)\right)  ^{\intercal}\tilde
{P}(t)\left(  \tilde{X}(t)-\tilde{\varphi}(t)\right)  +\displaystyle\int%
_{0}^{t}\left(  \tilde{X}(s)^{\intercal}\tilde{Q}(s)\tilde{X}(s)+\tilde
{u}(s)^{\intercal}\tilde{R}(s)\tilde{u}(s)\right)  ds\right] \\
=\left\{  \left[  \tilde{u}(t)+M_{1}(t)^{-1}\left(  M_{2}(t)\tilde{X}%
(t)-M_{3}(t)\tilde{\varphi}(t)-M_{4}(t)\tilde{v}(t)\right)  \right]
^{\intercal}M_{1}(t)\right. \\
\text{ \ \ }\cdot\left[  \tilde{u}(t)+M_{1}(t)^{-1}\left(  M_{2}(t)\tilde
{X}(t)-M_{3}(t)\tilde{\varphi}(t)-M_{4}(t)\tilde{v}(t)\right)  \right] \\
\text{ \ \ }+\tilde{X}(t)^{\intercal}\left[  \dot{\tilde{P}}(t)+\tilde
{P}(t)\tilde{A}(t)+\tilde{A}(t)^{\intercal}\tilde{P}(t)+\tilde{C}%
(t)^{\intercal}\tilde{P}(t)\tilde{C}(t)+\tilde{Q}(t)\right. \\
\text{\ \ }\left.  -M_{2}(t)^{\intercal}M_{1}(t)^{-1}M_{2}(t)\right]
\tilde{X}(t)\\
\text{ \ \ }+\tilde{X}(t)^{\intercal}\left[  M_{2}(t)^{\intercal}M_{1}%
(t)^{-1}\left(  M_{3}(t)\tilde{\varphi}(t)-M_{4}(t)\tilde{v}(t)\right)
-\tilde{A}(t)^{\intercal}\tilde{P}(t)\tilde{\varphi}(t)-\dot{\tilde{P}%
}(t)\tilde{\varphi}(t)\right. \\
\text{ \ \ }\left.  -\tilde{P}(t)\tilde{\gamma}(t)-\tilde{C}(t)^{\intercal
}\tilde{P}(t)\tilde{v}(t)\right] \\
\text{ \ \ }+\left[  M_{2}(t)^{\intercal}M_{1}(t)^{-1}\left(  M_{3}%
(t)\tilde{\varphi}(t)-M_{4}(t)\tilde{v}(t)\right)  -\tilde{A}(t)^{\intercal
}\tilde{P}(t)\tilde{\varphi}(t)-\dot{\tilde{P}}(t)\tilde{\varphi}(t)\right. \\
\text{ \ \ }\left.  -\tilde{P}(t)\tilde{\gamma}(t)-\tilde{C}(t)^{\intercal
}\tilde{P}(t)\tilde{v}(t)\right]  ^{\intercal}\tilde{X}(t)\\
\text{\ \ }\left.  +M_{5}(t)\right\}  dt+\{...\}dB(t),
\end{array}
\]
where
\begin{equation}%
\begin{array}
[c]{l}%
M_{1}(t)=\tilde{R}(t)+\tilde{D}(t)^{\intercal}\tilde{P}(t)\tilde{D}(t),\text{
}M_{2}(t)=\tilde{B}(t)^{\intercal}\tilde{P}(t)+\tilde{D}(t)^{\intercal}%
\tilde{P}(t)\tilde{C}(t),\\
M_{3}(t)=\tilde{B}(t)^{\intercal}\tilde{P}(t),\text{ }M_{4}(t)=\tilde
{D}(t)^{\intercal}\tilde{P}(t),\\
M_{5}(t)=-\tilde{\gamma}(t)^{\intercal}\tilde{P}(t)\tilde{\varphi}%
(t)-\tilde{\varphi}(t)^{\intercal}\tilde{P}(t)\tilde{\gamma}(t)+\tilde
{\varphi}(t)^{\intercal}\dot{\tilde{P}}(t)\tilde{\varphi}(t)+\tilde
{v}(t)^{\intercal}\tilde{P}(t)\tilde{v}(t)\\
\text{ \ \ \ \ \ \ \ \ \ }-\left(  M_{3}(t)\tilde{\varphi}(t)+M_{4}%
(t)\tilde{v}(t)\right)  ^{\intercal}M_{1}(t)^{-1}\left(  M_{3}(t)\tilde
{\varphi}(t)+M_{4}(t)\tilde{v}(t)\right)  .
\end{array}
\label{def-m1}%
\end{equation}
{Thus, we can obtain the form of the Riccati equation and the optimal
control\ $(\bar{u}(\cdot),\bar{Z}(\cdot))$ as following:}
\begin{equation}
\left\{
\begin{array}
[c]{l}%
\dot{\tilde{P}}(t)+\tilde{P}(t)\tilde{A}(t)+\tilde{A}(t)^{\intercal}\tilde
{P}(t)+\tilde{C}(t)^{\intercal}\tilde{P}(t)\tilde{C}(t)+\tilde{Q}(t)\\
\ \ -M_{2}(t)^{\intercal}M_{1}(t)^{-1}M_{2}(t)=0,\\
\tilde{R}(t)+\tilde{D}(t)^{\intercal}\tilde{P}(t)\tilde{D}(t)>0,
\end{array}
\right.  \label{eq-rec}%
\end{equation}%
\begin{equation}%
\begin{array}
[c]{rl}%
(\bar{u}(t),\bar{Z}(t))^{\intercal}= & -M_{1}(t)^{-1}\left[  M_{2}(t)(\bar
{X}(t),\bar{Y}(t))^{\intercal}-M_{3}(t)\tilde{\varphi}(t)-M_{4}(t)\tilde
{v}(t)\right]  ,
\end{array}
\label{eq-con-th}%
\end{equation}
and%
\[%
\begin{array}
[c]{rl}%
\tilde{\gamma}(t)= & \tilde{P}(t)^{-1}\left[  M_{2}(t)^{\intercal}%
M_{1}(t)^{-1}\left(  M_{3}(t)\tilde{\varphi}(t)-M_{4}(t)\tilde{v}(t)\right)
-\tilde{A}(t)^{\intercal}\tilde{P}(t)\tilde{\varphi}(t)\right. \\
& \left.  -\dot{\tilde{P}}(t)\tilde{\varphi}(t)-\tilde{C}(t)^{\intercal}%
\tilde{P}(t)\tilde{v}(t)\right]  .
\end{array}
\]

Set%
\[
\tilde{P}(t)=\left(
\begin{array}
[c]{ccc}%
\tilde{P}_{1}(t) &  & \tilde{P}_{2}(t)^{\intercal}\\
\tilde{P}_{2}(t) &  & \tilde{P}_{3}(t)
\end{array}
\right)  ,\text{ }\tilde{\varphi}(t)=\binom{\tilde{\varphi}_{1}(t)}%
{\tilde{\varphi}_{2}(t)},\text{ }\tilde{v}(t)=\binom{\tilde{v}_{1}(t)}%
{\tilde{v}_{2}(t)}.
\]
By the relationship between the adjoint process and the state process for
stochastic LQ problems, we have%
\begin{equation}
\left(  m(t)^{\intercal},-h(t)^{\intercal}\right)  ^{\intercal}=\tilde
{P}(t)\left(  \tilde{X}(t)-\tilde{\varphi}(t)\right)  \text{.}
\label{relation-square}%
\end{equation}
Comparing (\ref{relation-square}) with (\ref{eq-Y-m}), we obtain the relations
between $\tilde{P}(\cdot)$, $\left(  \tilde{\varphi}(\cdot),\tilde{v}%
(\cdot)\right)  $ and $P(\cdot)$, $\left(  \varphi(\cdot),v(\cdot)\right)  $
as following:
\[%
\begin{array}
[c]{rl}%
P_{1}(t)= & \tilde{P}_{1}(t)-\tilde{P}_{2}(t)^{\intercal}\tilde{P}_{3}%
(t)^{-1}\tilde{P}_{2}(t),\\
P_{2}(t)= & -\tilde{P}_{3}(t)^{-1}\tilde{P}_{2}(t),\\
P_{3}(t)= & \tilde{P}_{3}(t)^{-1},
\end{array}
\]%
\[%
\begin{array}
[c]{rl}%
\varphi_{1}(t)= & -P_{1}(t)\tilde{\varphi}_{1}(t),\text{ }v_{1}(t)=-P_{1}%
(t)\tilde{v}_{1}(t),\\
\varphi_{2}(t)= & \tilde{\varphi}_{2}(t)-P_{2}(t)\tilde{\varphi}_{1}(t),\text{
}v_{2}(t)=\tilde{v}_{2}(t)-P_{2}(t)\tilde{v}_{1}(t),
\end{array}
\]
or the equivalent form
\[%
\begin{array}
[c]{rl}%
\tilde{P}_{1}(t)= & P_{1}(t)+P_{2}(t)^{\intercal}P_{3}(t)^{-1}P_{2}(t),\\
\tilde{P}_{2}(t)= & -P_{3}(t)^{-1}P_{2}(t),\\
\tilde{P}_{3}(t)= & P_{3}(t)^{-1},
\end{array}
\]%
\[%
\begin{array}
[c]{rl}%
\tilde{\varphi}_{1}(t)= & -P_{1}(t)^{-1}\varphi_{1}(t),\text{ }\tilde{v}%
_{1}(t)=-P_{1}(t)^{-1}v_{1}(t),\\
\tilde{\varphi}_{2}(t)= & \varphi_{2}(t)-P_{2}(t)P_{1}(t)^{-1}\varphi
_{1}(t),\text{ }\tilde{v}_{2}(t)=v_{2}(t)-P_{2}(t)P_{1}(t)^{-1}v_{1}(t).
\end{array}
\]

Note that $P_{3}(T)=0$ which makes $P_{3}(T)^{-1}$ meaningless. So we need to
modify the terminal conditions of $\tilde{P}(\cdot)$, $\tilde{\varphi}(\cdot)$
and $P(\cdot)$, $\varphi(\cdot)$. For $i=1,2,...$, consider the solutions
\[
_{i}P(t)=\left(
\begin{array}
[c]{ccc}%
P_{1,i}(t) &  & P_{2,i}(t)^{\intercal}\\
P_{2,i}(t) &  & P_{3,i}(t)
\end{array}
\right)  ,\text{ }_{i}\varphi(t)=\binom{\varphi_{1,i}(t)}{\varphi_{2,i}%
(t)},\text{ }_{i}v(t)=\binom{v_{1,i}(t)}{v_{2,i}(t)}%
\]
to equations (\ref{eq-p1}), (\ref{eq-p3}), (\ref{eq-p4}), (\ref{eq-phi1}),
(\ref{eq-phi2}) with the terminal conditions
\begin{equation}
_{i}P(T)=\left(
\begin{array}
[c]{ccc}%
G &  & F^{\intercal}\\
F &  & \frac{1}{i}I_{m}%
\end{array}
\right)  ,\text{ }_{i}\varphi(T)=\binom{0}{\xi}. \label{eq-ter-i}%
\end{equation}
Correspondingly, we consider the Riccati equation (\ref{eq-rec}) and
(\ref{eq-phi-til}) for%
\[
_{i}\tilde{P}(t)=\left(
\begin{array}
[c]{ccc}%
\tilde{P}_{1,i}(t) &  & \tilde{P}_{2,i}(t)^{\intercal}\\
\tilde{P}_{2,i}(t) &  & \tilde{P}_{3,i}(t)
\end{array}
\right)  ,\text{ }_{i}\tilde{\varphi}(t)=\binom{\tilde{\varphi}_{1,i}%
(t)}{\tilde{\varphi}_{2,i}(t)},\text{ }_{i}\tilde{v}(t)=\binom{\tilde{v}%
_{1,i}(t)}{\tilde{v}_{2,i}(t)}%
\]
with terminal conditions
\begin{equation}
_{i}\tilde{P}(T)=\left(
\begin{array}
[c]{ccc}%
G+iF^{\intercal}F &  & -iF^{\intercal}\\
-iF &  & iI_{m}%
\end{array}
\right)  ,\text{ }_{i}\tilde{\varphi}(T)=\binom{0}{\xi}. \label{eq-termi-p-t}%
\end{equation}

\begin{remark}
In fact, (\ref{eq-rec}) and (\ref{eq-phi-til}) with terminal conditions
(\ref{eq-termi-p-t}) correspond to the following stochastic control problem:
the state equation is (\ref{eq-pro-x}) and the cost functional is
\begin{equation}%
\begin{array}
[c]{rl}%
J_{i}(u(\cdot),Z(\cdot))= & \frac{1}{2}\mathbb{E}\left[  \int_{0}^{T}\left(
\tilde{X}(t)^{\intercal}\tilde{Q}(t)\tilde{X}(t)+\tilde{u}(t)^{\intercal
}\tilde{R}(t)\tilde{u}(t)\right)  dt+\left\langle GX(T),X(T)\right\rangle
\right] \\
& +\frac{1}{2}i\mathbb{E}\left[  \left(  Y(T)-FX(T)-\xi\right)  ^{\intercal
}\left(  Y(T)-FX(T)-\xi\right)  \right]  .
\end{array}
\label{obj-pen}%
\end{equation}

\end{remark}

Theorem \ref{th-p-ex} justifies the above heuristic derivation.

\begin{assumption}
\label{assm-p-i-ex}There exist a natural number $i_{0}$ such that for $i\geq
i_{0}$, (\ref{eq-rec}) has a positive definite solution $_{i}\tilde{P}(\cdot)$
which satisfies the terminal condition (\ref{eq-termi-p-t}).
\end{assumption}

\begin{remark}
Under the assumption $D_{4}>0$ and $C_{4}\geq0$, it is easy to check that
$\tilde{R}+\tilde{D}^{\intercal}\tilde{D}>0$. Then, by Theorem 4.1 in
\cite{Chen-Z}, Assumption \ref{assm-p-i-ex} holds for $i_{0}=1$.
\end{remark}

Set%
\begin{align*}
L_{1,i}(t)  &  =I_{m}-P_{2,i}(t)C_{2}(t)+P_{3,i}(t)C_{4}(t),\\
L_{2,i}(t)  &  =I_{n}-P_{2,i}(t)^{\intercal}C_{2}(t)^{\intercal}+\left(
P_{1,i}(t)C_{2}(t)+P_{2,i}(t)^{\intercal}C_{4}(t)\right)  L_{1}(t)^{-1}%
P_{3,i}(t)C_{2}(t)^{\intercal}.
\end{align*}

\begin{theorem}
\label{th-p-ex}Suppose that Assumptions \ref{assum-bound}, \ref{assum-pos} and
\ref{assm-p-i-ex} hold. For $i\geq i_{0}$, define
\begin{equation}%
\begin{array}
[c]{rl}%
P_{1,i}(t)= & \tilde{P}_{1,i}(t)-\tilde{P}_{2,i}(t)^{\intercal}\tilde{P}%
_{3,i}(t)^{-1}\tilde{P}_{2,i}(t),\\
P_{2,i}(t)= & -\tilde{P}_{3,i}(t)^{-1}\tilde{P}_{2,i}(t),\\
P_{3,i}(t)= & \tilde{P}_{3,i}(t)^{-1},
\end{array}
\label{eq-re}%
\end{equation}%
\[%
\begin{array}
[c]{rl}%
\varphi_{1,i}(t)= & -P_{1,i}(t)\tilde{\varphi}_{1,i}(t),\text{ }%
v_{1,i}(t)=-P_{1,i}(t)\tilde{v}_{1,i}(t),\\
\varphi_{2,i}(t)= & \tilde{\varphi}_{2,i}(t)-P_{2,i}(t)\tilde{\varphi}%
_{1,i}(t),\text{ }v_{2,i}(t)=\tilde{v}_{2,i}(t)-P_{2,i}(t)\tilde{v}_{1,i}(t),
\end{array}
\]
where $_{i}\tilde{P}(\cdot)$ and $(_{i}\tilde{\varphi}(\cdot),$ $_{i}\tilde
{v}(\cdot))$ are solutions to (\ref{eq-rec}) and (\ref{eq-phi-til}). Suppose
that $L_{1,i}(\cdot)^{-1}$ and $L_{2,i}(\cdot)^{-1}\,$exist. Then the above
defined $\left(  P_{1,i}(\cdot),P_{2,i}(\cdot),P_{3,i}(\cdot)\right)  $ solves
(\ref{eq-p1}), (\ref{eq-p3}), (\ref{eq-p4}) and $\left(  _{i}\varphi
(\cdot),\text{ }_{i}v(\cdot)\right)  $ solves (\ref{eq-phi1}), (\ref{eq-phi2})
with (\ref{eq-ter-i}) for each $i\geq i_{0}$.
\end{theorem}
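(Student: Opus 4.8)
The plan is to verify directly that the algebraic transformation (\ref{eq-re}), together with the accompanying relations $\varphi_{1,i}=-P_{1,i}\tilde{\varphi}_{1,i}$, $\varphi_{2,i}=\tilde{\varphi}_{2,i}-P_{2,i}\tilde{\varphi}_{1,i}$, $v_{1,i}=-P_{1,i}\tilde{v}_{1,i}$, $v_{2,i}=\tilde{v}_{2,i}-P_{2,i}\tilde{v}_{1,i}$, carries a solution of the Riccati system (\ref{eq-rec})--(\ref{eq-phi-til}) with terminal data (\ref{eq-termi-p-t}) to a solution of the non-Riccati system (\ref{eq-p1}), (\ref{eq-p3}), (\ref{eq-p4}), (\ref{eq-phi1}), (\ref{eq-phi2}) with terminal data (\ref{eq-ter-i}). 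These relations were obtained informally in Subsection 4.1 by comparing the decoupling (\ref{eq-Y-m}) with the square-completion identity (\ref{relation-square}); the content of the theorem is to make that comparison rigorous, i.e.\ an \emph{a posteriori} verification that does not rely on the control-theoretic interpretation. I would first dispatch the terminal conditions: since $\tilde{P}_{3,i}(T)=iI_m$, $\tilde{P}_{2,i}(T)=-iF$, $\tilde{P}_{1,i}(T)=G+iF^{\intercal}F$ from (\ref{eq-termi-p-t}), substitution into (\ref{eq-re}) gives $P_{3,i}(T)=\tfrac1i I_m$, $P_{2,i}(T)=-\tilde{P}_{3,i}(T)^{-1}\tilde{P}_{2,i}(T)=F$ and $P_{1,i}(T)=\tilde{P}_{1,i}(T)-\tilde{P}_{2,i}(T)^{\intercal}\tilde{P}_{3,i}(T)^{-1}\tilde{P}_{2,i}(T)=G$, which are exactly (\ref{eq-ter-i}), and $\tilde{\varphi}_i(T)=(0,\xi)^{\intercal}$ likewise yields $\varphi_{1,i}(T)=0$, $\varphi_{2,i}(T)=\xi$.

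For the differential equations I would differentiate (\ref{eq-re}) using $\frac{d}{dt}\tilde{P}_{3,i}^{-1}=-\tilde{P}_{3,i}^{-1}\dot{\tilde{P}}_{3,i}\tilde{P}_{3,i}^{-1}$, thereby expressing $\dot{P}_{1,i}$, $\dot{P}_{2,i}$, $\dot{P}_{3,i}$ through the blocks $\dot{\tilde{P}}_{1,i}$, $\dot{\tilde{P}}_{2,i}$, $\dot{\tilde{P}}_{3,i}$, which the Riccati equation (\ref{eq-rec}) supplies via $M_1,M_2$ of (\ref{def-m1}). The crucial bookkeeping step is to reconcile the two different feedback representations: (\ref{eq-con-th}) expresses $(\bar{u},\bar{Z})$ as a feedback of $(\bar{X},\bar{Y})$ through $M_1^{-1}M_2$, whereas (\ref{eq-u-infi}) and (\ref{eq-nz}) express $(\bar{u},\bar{Z},n)$ as a feedback of $(\bar{X},h)$ through $L_6,\dots,L_{11}$, $S_3,\dots,S_5$. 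The passage between them is precisely the change of variable $\bar{Y}-\tilde{\varphi}_{2,i}=-\tilde{P}_{3,i}^{-1}h-\tilde{P}_{3,i}^{-1}\tilde{P}_{2,i}(\bar{X}-\tilde{\varphi}_{1,i})$ read off from the lower block of (\ref{relation-square}); I would use it to rewrite the $M$-blocks in terms of the $P_{i,i}$ and check that the resulting coefficients coincide with those of Subsection 3.2, the invertibility of $L_{1,i}$ and $L_{2,i}$ being the standing hypotheses that make $\bar{Z}$ and $n$ solvable, while $M_1=\tilde{R}+\tilde{D}^{\intercal}{}_i\tilde{P}\tilde{D}>0$ (from positive definiteness of ${}_i\tilde{P}$) supplies invertibility of $L_5$. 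Matching the drift coefficients of $\bar{X}$ and of $h$ term by term then yields (\ref{eq-p1}) and (\ref{eq-p4}); the analogous computation starting from the relation for $\bar{Y}$ rather than $m$ yields (\ref{eq-p3}), equivalently the transpose (\ref{eq-p2}).

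For the inhomogeneous parts I would apply It\^{o}'s product rule to $\varphi_{1,i}=-P_{1,i}\tilde{\varphi}_{1,i}$ and $\varphi_{2,i}=\tilde{\varphi}_{2,i}-P_{2,i}\tilde{\varphi}_{1,i}$, where $P_{1,i},P_{2,i}$ are deterministic: the diffusion terms reproduce $v_{1,i}=-P_{1,i}\tilde{v}_{1,i}$ and $v_{2,i}=\tilde{v}_{2,i}-P_{2,i}\tilde{v}_{1,i}$ at once, and the drift terms match (\ref{eq-phi1}), (\ref{eq-phi2}) after inserting the already-verified ODEs for $P_{1,i}$, $P_{2,i}$ together with the expression for $\tilde{\gamma}$ given below (\ref{eq-con-th}).

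The main obstacle is the purely algebraic one above: the non-Riccati coefficients $L_1,\dots,L_{11}$, $S_1,\dots,S_5$ of Subsection 3.2 and the auxiliary-LQ coefficients $M_1,\dots,M_5$ of Subsection 4.1 are built from the same data but organized around different state/adjoint splittings, so the verification is a long block-matrix identity in which sign and transpose errors propagate easily; keeping the symmetry of (\ref{eq-p1}) and (\ref{eq-p4}) and the transpose relation between (\ref{eq-p2}) and (\ref{eq-p3}) as running consistency checks is essential. A conceptually cleaner way to organize the computation — and an independent sanity check — is to observe that both systems decouple the \emph{same} penalized Hamiltonian system associated with the cost (\ref{obj-pen}): the splitting $(\bar{X},\bar{Y})$ produces (\ref{eq-rec}) while the splitting $(\bar{X},h)$ produces (\ref{eq-p1}), (\ref{eq-p3}), (\ref{eq-p4}), so that (\ref{relation-square}) forces exactly the transformation (\ref{eq-re}). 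I would nonetheless carry out the direct substitution, so that the proof is self-contained and independent of the only heuristic derivation of Subsection 4.1.
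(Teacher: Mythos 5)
Your proposal is correct and takes essentially the same route as the paper's own proof: the appendix verification of Theorem \ref{th-p-ex} likewise checks the terminal conditions from (\ref{eq-termi-p-t}), differentiates $\tilde{P}_{3,i}(t)^{-1}$, $-\tilde{P}_{3,i}(t)^{-1}\tilde{P}_{2,i}(t)$ and $\tilde{P}_{1,i}(t)-\tilde{P}_{2,i}(t)^{\intercal}\tilde{P}_{3,i}(t)^{-1}\tilde{P}_{2,i}(t)$, substitutes the block form of the Riccati system (\ref{eq-rec}), and matches coefficients (first of $a_{11}(\cdot)$ and the remainder terms, then of $\varphi_{1,i},\varphi_{2,i},v_{1,i},v_{2,i}$) against (\ref{eq-p1}), (\ref{eq-p3}), (\ref{eq-p4}), (\ref{eq-phi1}), (\ref{eq-phi2}), exactly the direct term-by-term verification you outline. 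The only cosmetic difference is organizational: you frame the bookkeeping as reconciling the $(\bar{X},\bar{Y})$-feedback through $M_{1}^{-1}M_{2}$ with the $(\bar{X},h)$-feedback through $L_{6},\dots,L_{11}$ via (\ref{relation-square}), while the paper packages the same algebra into the prepared identities (\ref{app-1})--(\ref{app-9}).
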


We put the proof in Appendix 7.1.

\begin{lemma}
\label{le-con-equ}Under the same assumptions as Theorem \ref{th-p-ex}, for
$i\geq i_{0}$, we have%
\[
M_{1,i}(t)^{-1}M_{2,i}(t)=-\left(
\begin{array}
[c]{ccc}%
L_{6,i}(t)+L_{7,i}(t)P_{3,i}(t)^{-1}P_{2,i}(t) &  & -L_{7,i}(t)P_{3,i}%
(t)^{-1}\\
L_{10,i}(t)+L_{11,i}(t)P_{3,i}(t)^{-1}P_{2,i}(t) &  & -L_{11,i}(t)P_{3,i}%
(t)^{-1}%
\end{array}
\right)  ,
\]%
\[
M_{1,i}(t)^{-1}\left[  M_{3,i}(t)_{i}\tilde{\varphi}(t)+M_{4,i}(t)_{i}%
\tilde{v}(t)\right]  =\left(
\begin{array}
[c]{c}%
L_{7,i}(t)P_{3,i}(t)^{-1}\varphi_{2,i}(t)+S_{3,i}(t)\\
L_{11,i}(t)P_{3,i}(t)^{-1}\varphi_{2,i}(t)+S_{5,i}(t)
\end{array}
\right)  .
\]

\end{lemma}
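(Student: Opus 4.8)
The plan is to verify both identities by direct substitution, exploiting the explicit algebraic relations between the LQ-problem matrices $M_{j,i}(t)$ (defined via $_i\tilde{P}(t)$ in \eqref{def-m1}) and the decoupling matrices $L_{k,i}(t)$, $S_{k,i}(t)$ (defined via $_iP(t)$ in \eqref{def-lr}--\eqref{def-l8}). The whole point of Theorem \ref{th-p-ex} is that $_iP(t)$ and $_i\tilde{P}(t)$ are tied together by the block transformation \eqref{eq-re}, namely $P_{1,i}=\tilde{P}_{1,i}-\tilde{P}_{2,i}^{\intercal}\tilde{P}_{3,i}^{-1}\tilde{P}_{2,i}$, $P_{2,i}=-\tilde{P}_{3,i}^{-1}\tilde{P}_{2,i}$, $P_{3,i}=\tilde{P}_{3,i}^{-1}$, together with the companion relations for the inhomogeneous parts $\varphi_{j,i}$, $v_{j,i}$ versus $\tilde{\varphi}_{j,i}$, $\tilde{v}_{j,i}$. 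So Lemma \ref{le-con-equ} is essentially the statement that the feedback gain $-M_{1,i}^{-1}M_{2,i}$ and the affine term $M_{1,i}^{-1}(M_{3,i}{}_i\tilde{\varphi}+M_{4,i}{}_i\tilde{v})$ from the completion-of-squares control \eqref{eq-con-th}, once expressed in the $_iP$-coordinates, reproduce exactly the gains $(L_{6,i},L_{7,i},L_{10,i},L_{11,i})$ and the affine terms $(S_{3,i},S_{5,i})$ produced by the decoupling technique \eqref{eq-u-infi}--\eqref{eq-nz}.

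\textbf{First} I would fix $i\geq i_0$ and write out the $2\times 2$ block structure of $M_{1,i}$, $M_{2,i}$, $M_{3,i}$, $M_{4,i}$ using the block forms of $\tilde{A},\tilde{B},\tilde{C},\tilde{D},\tilde{Q},\tilde{R}$ recorded before \eqref{eq-phi-til}. Recall $M_{1,i}=\tilde{R}+\tilde{D}^{\intercal}{}_i\tilde{P}\,\tilde{D}$, which is invertible by Assumption \ref{assm-p-i-ex} (the constraint in \eqref{eq-rec}), and $M_{2,i}=\tilde{B}^{\intercal}{}_i\tilde{P}+\tilde{D}^{\intercal}{}_i\tilde{P}\,\tilde{C}$. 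The key computational observation is that the control variable $\tilde{u}=(u^{\intercal},Z^{\intercal})^{\intercal}$ corresponds to the pair $(\bar{u},\bar{Z})$, so the two rows of $-M_{1,i}^{-1}M_{2,i}$ must match the feedback rows for $\bar{u}$ and $\bar{Z}$ respectively. I would substitute \eqref{eq-re} to convert every occurrence of $\tilde{P}_{1,i},\tilde{P}_{2,i},\tilde{P}_{3,i}$ into $P_{1,i},P_{2,i},P_{3,i}$; the combination $\tilde{P}_{3,i}^{-1}=P_{3,i}$ and $-\tilde{P}_{3,i}^{-1}\tilde{P}_{2,i}=P_{2,i}$ means that wherever $P_{3,i}^{-1}$ appears in the claimed right-hand side it is standing in for $\tilde{P}_{3,i}$, so the statement should simplify considerably in the $\tilde{P}$-coordinates before re-expressing.

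\textbf{The main obstacle} I anticipate is the bookkeeping in relating the intermediate matrices $L_{1,i},L_{2,i}$ (and hence $L_{5,i},\dots,L_{11,i}$) to the Schur-type blocks arising from inverting $M_{1,i}$. The matrices $L_{1,i}=I_m-P_{2,i}C_2+P_{3,i}C_4$ and $L_{2,i}$ from \eqref{eq-l1} and \eqref{def-lr} encode precisely the ``solve for $n$ and $\bar Z$'' step that, on the LQ side, corresponds to inverting the diffusion-coupling block of $M_{1,i}$; matching these requires showing that the block $L_1$-inversions in the decoupling derivation coincide with the relevant sub-blocks of $M_{1,i}^{-1}$ after the change of variables. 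Concretely, I expect one must verify a matrix identity of the form $L_{5,i}=\big(M_{1,i}^{-1}\big)_{11}^{-1}$ (the $(1,1)$ block of the inverse) and analogous expressions for the off-diagonal pieces, which is where the algebra is densest. My strategy to tame this is to \emph{not} invert $M_{1,i}$ symbolically in full, but instead to verify the two claimed equalities by checking that multiplying the claimed right-hand side by $M_{1,i}$ reproduces $M_{2,i}$ (respectively $M_{3,i}{}_i\tilde{\varphi}+M_{4,i}{}_i\tilde{v}$); this turns two inversions into two multiplications and lets me use the defining equations \eqref{eq-u-infi}, \eqref{eq-nz} for $L_6,L_7,L_{10},L_{11},S_3,S_5$ directly.

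\textbf{Finally}, for the second identity I would use the companion transformation $\tilde{\varphi}_{1,i}=-P_{1,i}^{-1}\varphi_{1,i}$, $\tilde{\varphi}_{2,i}=\varphi_{2,i}-P_{2,i}P_{1,i}^{-1}\varphi_{1,i}$ (and the same for $\tilde{v}$) from Theorem \ref{th-p-ex}, substituting into $M_{1,i}^{-1}(M_{3,i}{}_i\tilde{\varphi}+M_{4,i}{}_i\tilde{v})$ and reconciling with the definitions of $S_{3,i}$ in \eqref{def-L1} and $S_{5,i}$ in \eqref{def-l8}. The appearance of $L_{7,i}P_{3,i}^{-1}\varphi_{2,i}$ and $L_{11,i}P_{3,i}^{-1}\varphi_{2,i}$ in the stated right-hand sides is exactly what one gets when the $\tilde{\varphi}_2$-term (which carries the $\xi$-terminal data) is fed through the same feedback gain found in the first identity, so the two parts of the lemma share almost all of their algebra and I would organize the proof to prove the homogeneous (gain) identity first and then reuse those block computations for the affine identity. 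Since the derivations \eqref{eq-Zbar}--\eqref{eq-phi2} and the completion-of-squares expansion preceding \eqref{eq-rec} both describe the \emph{same} optimal feedback $(\bar u,\bar Z)$, the identities are forced; the proof is purely a verification that the two parametrizations agree, so no new analytic input beyond the already-assumed invertibility of $M_{1,i}$, $L_{1,i}$, $L_{2,i}$, $L_{5,i}$ and $\tilde{P}_{3,i}$ is needed.
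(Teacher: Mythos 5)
Your proposal is correct and follows essentially the same route as the paper: the paper's Appendix 7.2 proof is exactly this direct block-algebra verification, using the $P_{j,i}\leftrightarrow\tilde{P}_{j,i}$ and $\varphi_{j,i}\leftrightarrow\tilde{\varphi}_{j,i}$ dictionaries from Theorem \ref{th-p-ex}, the explicit Schur-complement blocks $a_{11},a_{21},a_{22}$ of $M_{1,i}^{-1}$ (your anticipated identity $L_{5,i}=\bigl((M_{1,i}^{-1})_{11}\bigr)^{-1}$ is precisely the paper's relation (\ref{app-5})), the gain identities (\ref{eq-re1}) for $L_{6,i},L_{7,i},L_{10,i},L_{11,i}$, and a separate coefficient-matching lemma (Lemma \ref{le-re-s}) for the affine terms $S_{3,i},S_{5,i}$. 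Your only deviation—verifying the identities by multiplying through by $M_{1,i}$ rather than using its explicit block inverse—is an organizational convenience, not a different argument; just be aware that your closing remark that the identities are ``forced'' by uniqueness of the optimal feedback is heuristic and would itself need justification, so the algebraic verification you outline is what actually carries the proof.
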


The proof is in Appendix 7.2. This lemma will be used in the proof of Theorem
\ref{th-infi}.

\begin{remark}
\label{new-rem1}If $C_{4}>0$ and $D_{4}>0$, then it can be verified that
Assumption \ref{assm-p-i-ex} holds. If $C_{2}=0$ and $D_{4}>0$, then
$L_{1,i}(\cdot)^{-1}$ and $L_{2,i}(\cdot)^{-1}\,$in Theorem \ref{th-p-ex} exist.
\end{remark}

\subsection{Existence and uniqueness results}

In this subsection, we study the solvability of (\ref{eq-p1}), (\ref{eq-p3}),
(\ref{eq-p4}) by Theorem \ref{th-p-ex}. {\color{blue} }

\begin{lemma}
\label{le-com-rec}Suppose $\tilde{P}_{1}(\cdot)$ and $\tilde{P}_{2}(\cdot)$
are solutions to Riccati equation (\ref{eq-rec}) with terminal conditions
$\tilde{P}_{1}(T)\geq\tilde{P}_{2}(T)$, then $\tilde{P}_{1}(t)\geq\tilde
{P}_{1}(t)$ for $t\in\left[  0,T\right]  $.
\end{lemma}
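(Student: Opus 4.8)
The plan is to read the claimed inequality through the control-theoretic meaning of solutions of (\ref{eq-rec}): the conclusion should say $\tilde{P}_1(t)\geq\tilde{P}_2(t)$, and I would establish it by a monotonicity argument for the value functions of two auxiliary LQ problems that share all data except the terminal weight. First I would record, in homogeneous form, the completion-of-squares identity already derived just before (\ref{eq-rec}). Fix $t\in[0,T]$ and $x\in\mathbb{R}^{n+m}$, and for a terminal matrix $\Gamma$ consider the LQ problem with state dynamics (\ref{eq-pro-x}) started at $\tilde{X}(t)=x$, running cost $\frac12(\tilde{X}^{\intercal}\tilde{Q}\tilde{X}+\tilde{u}^{\intercal}\tilde{R}\tilde{u})$, and terminal cost $\frac12\tilde{X}(T)^{\intercal}\Gamma\tilde{X}(T)$; call its value $\hat{J}(t,x;\tilde{u})$. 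If $\tilde{P}(\cdot)$ solves (\ref{eq-rec}) with $\tilde{P}(T)=\Gamma$, then setting $\tilde{\varphi}\equiv0$, $\tilde{v}\equiv0$ (so $M_5\equiv0$) in that Itô expansion, integrating from $t$ to $T$ and taking expectations gives
\[
\hat{J}(t,x;\tilde{u})=\tfrac12 x^{\intercal}\tilde{P}(t)x+\tfrac12\mathbb{E}\int_t^T\bigl[\tilde{u}+M_1^{-1}M_2\tilde{X}\bigr]^{\intercal}M_1\bigl[\tilde{u}+M_1^{-1}M_2\tilde{X}\bigr]\,ds .
\]
The second line of (\ref{eq-rec}) guarantees $M_1=\tilde{R}+\tilde{D}^{\intercal}\tilde{P}\tilde{D}>0$, so the integral term is nonnegative and vanishes exactly at the admissible feedback $\tilde{u}=-M_1^{-1}M_2\tilde{X}$; hence $\tfrac12 x^{\intercal}\tilde{P}(t)x=\inf_{\tilde{u}}\hat{J}(t,x;\tilde{u})$, i.e. each solution of (\ref{eq-rec}) is the value-function matrix of the corresponding LQ problem.

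Next I would exploit that $\tilde{P}_1$ and $\tilde{P}_2$ correspond to LQ problems with \emph{identical} dynamics and running cost and differing only in their terminal matrices $\Gamma_1=\tilde{P}_1(T)$ and $\Gamma_2=\tilde{P}_2(T)$. For any admissible $\tilde{u}$ the state trajectory is then the same in both problems, so
\[
\hat{J}_1(t,x;\tilde{u})-\hat{J}_2(t,x;\tilde{u})=\tfrac12\mathbb{E}\bigl[\tilde{X}(T)^{\intercal}(\Gamma_1-\Gamma_2)\tilde{X}(T)\bigr]\geq0 ,
\]
since $\Gamma_1-\Gamma_2=\tilde{P}_1(T)-\tilde{P}_2(T)\geq0$ by hypothesis. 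Taking the infimum over $\tilde{u}$ on both sides and using that the infimum is monotone under a pointwise inequality of objectives yields $\tfrac12 x^{\intercal}\tilde{P}_1(t)x\geq\tfrac12 x^{\intercal}\tilde{P}_2(t)x$. As $x$ is arbitrary, this gives $\tilde{P}_1(t)\geq\tilde{P}_2(t)$ for every $t\in[0,T]$, which is the assertion.

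The step to be careful about is the well-posedness underlying the value-function representation, namely that each infimum is finite and attained at an admissible control: this is precisely where the positivity constraint $M_{1,j}=\tilde{R}+\tilde{D}^{\intercal}\tilde{P}_j\tilde{D}>0$, built into the notion of a solution of (\ref{eq-rec}), is used, since it turns each completion-of-squares remainder into a nonnegative functional minimized at the admissible feedback $\tilde{u}=-M_{1,j}^{-1}M_{2,j}\tilde{X}$ (admissibility of this feedback and the vanishing expectation of the stochastic integral follow from boundedness of the coefficients under Assumption \ref{assum-bound}). I would note that a purely ODE-based alternative is available—subtracting the two Riccati equations and showing that $\Delta=\tilde{P}_1-\tilde{P}_2$ solves a linear matrix equation with nonnegative forcing and terminal datum $\Delta(T)\geq0$—but it requires rewriting the quadratic difference $M_{2,1}^{\intercal}M_{1,1}^{-1}M_{2,1}-M_{2,2}^{\intercal}M_{1,2}^{-1}M_{2,2}$ in closed-loop form, and is less transparent than the control representation above.
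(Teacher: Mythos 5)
Your proof is correct and takes essentially the same route as the paper: both identify each solution of (\ref{eq-rec}) with the value function of the LQ problem (\ref{eq-pro-x})--(\ref{eq-pro-J}) associated with its terminal weight, and then compare the two value functions, which differ only through the terminal cost matrices $\tilde{P}_1(T)\geq\tilde{P}_2(T)$. The only difference is that the paper simply cites Theorem 6.1 of \cite{Yong-Zhou} for the representation $x^{\intercal}\tilde{P}_j(t)x$ of the value function, whereas you derive it self-containedly via the completion-of-squares identity with $\tilde{\varphi}\equiv0$, $\tilde{v}\equiv0$ (and you correctly read the lemma's conclusion as $\tilde{P}_1(t)\geq\tilde{P}_2(t)$, fixing the typo in the statement).
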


\begin{proof}
By Theorem 6.1 in \cite{Yong-Zhou}, the value function of the corresponding LQ
problem is $x^{\intercal}\tilde{P}_{1}(t)x$ (resp. $x^{\intercal}\tilde{P}%
_{2}(t)x$) for all $\left(  t,x\right)  \in\lbrack0,T]\times\mathbb{R}^{n}$.
The proof can be obtained from $\tilde{P}_{1}(T)\geq\tilde{P}_{2}(T)$.
\end{proof}

\begin{theorem}
\label{th-conver} Suppose that the same assumptions as Theorem \ref{th-p-ex}
hold and $(\tilde{R}(\cdot)+\tilde{D}(\cdot)^{\intercal}$ $_{i}\tilde{P}%
(\cdot)\tilde{D}(\cdot))^{-1}$ is bounded for each $i\geq i_{0}$. Then
$P_{3,i}(t)\geq P_{3,i+1}(t)\geq0$, and $P_{1,i+1}(t)\geq P_{1,i}(t)\geq0$ for
$i\geq i_{0}$. Moreover, suppose that $P_{1,i}(\cdot)$ has upper bound and
$\left\vert P_{2,i}(\cdot)\right\vert $, $L_{1,i}(\cdot)^{-1}$, $L_{2,i}%
(\cdot)^{-1}\,$and $L_{5,i}(\cdot)^{-1}$ are uniformly bounded for each $i\geq
i_{0}$. Then (\ref{eq-p1}), (\ref{eq-p3}), (\ref{eq-p4}) have a unique
solution $\left(  P_{1}(\cdot),P_{2}(\cdot),P_{3}(\cdot)\right)  $.
\end{theorem}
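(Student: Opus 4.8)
The plan is to establish the two assertions in turn: first the monotonicity of the families $\{P_{1,i}\}$ and $\{P_{3,i}\}$ in the index $i$, and then, using this monotonicity together with the stated uniform bounds, the convergence of $(P_{1,i},P_{2,i},P_{3,i})$ to a solution of (\ref{eq-p1}), (\ref{eq-p3}), (\ref{eq-p4}) and the uniqueness of that solution.

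For the monotonicity I would work through the correspondence (\ref{eq-re}) between $(P_{1,i},P_{2,i},P_{3,i})$ and the auxiliary Riccati solution ${}_{i}\tilde{P}(\cdot)$. Starting from the terminal data (\ref{eq-termi-p-t}), a direct computation gives
\[
{}_{i+1}\tilde{P}(T)-{}_{i}\tilde{P}(T)=\left(
\begin{array}[c]{c}
F^{\intercal}\\
-I_{m}
\end{array}
\right)\left(
\begin{array}[c]{cc}
F & -I_{m}
\end{array}
\right)\geq0,
\]
so that ${}_{i+1}\tilde{P}(T)\geq{}_{i}\tilde{P}(T)$. Applying the comparison result Lemma \ref{le-com-rec} to the Riccati equation (\ref{eq-rec}) (its hypotheses being secured by Assumption \ref{assm-p-i-ex} and the boundedness of $(\tilde{R}+\tilde{D}^{\intercal}\,{}_{i}\tilde{P}\tilde{D})^{-1}$) propagates this to ${}_{i+1}\tilde{P}(t)\geq{}_{i}\tilde{P}(t)$ for all $t\in[0,T]$. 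Reading off the lower-right block yields $\tilde{P}_{3,i+1}(t)\geq\tilde{P}_{3,i}(t)>0$, and since inversion is antitone on positive definite matrices, $P_{3,i}=\tilde{P}_{3,i}^{-1}\geq\tilde{P}_{3,i+1}^{-1}=P_{3,i+1}\geq0$ by (\ref{eq-re}). Because $P_{1,i}$ is exactly the Schur complement of the block $\tilde{P}_{3,i}$ in ${}_{i}\tilde{P}$, I would invoke the minimization characterization of the Schur complement, which makes it monotone in the full matrix ${}_{i}\tilde{P}$; this gives $P_{1,i+1}(t)\geq P_{1,i}(t)$, while positive definiteness of ${}_{i}\tilde{P}$ forces $P_{1,i}(t)\geq0$.

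For the existence part I would first note that (\ref{eq-p1}), (\ref{eq-p3}), (\ref{eq-p4}) constitute a closed ODE system in $(P_{1},P_{2},P_{3})$ alone: by (\ref{eq-l1}), (\ref{def-lr}), (\ref{def-L1}), (\ref{def-l8}) every coefficient $L_{1},L_{2},L_{5}$ and $L_{6},\ldots,L_{11}$ is a polynomial in $(P_{1},P_{2},P_{3})$ and in the inverses $L_{1}^{-1},L_{2}^{-1},L_{5}^{-1}$, with bounded building blocks by Assumption \ref{assum-bound}. The monotonicity just proved supplies the two-sided bounds $0\leq P_{3,i}(t)\leq P_{3,i_{0}}(t)$ and $0\leq P_{1,i}(t)\leq$ (its assumed upper bound); together with the assumed uniform boundedness of $|P_{2,i}|$, $L_{1,i}^{-1}$, $L_{2,i}^{-1}$, $L_{5,i}^{-1}$, this makes all $L_{6,i},\ldots,L_{11,i}$ uniformly bounded in $(t,i)$. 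Hence, by Theorem \ref{th-p-ex}, the families $(P_{1,i},P_{2,i},P_{3,i})$ solve the penalized ODEs with uniformly bounded right-hand sides, so they are uniformly bounded and uniformly Lipschitz, hence equicontinuous.

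I would then pass to the limit. Monotone convergence gives the pointwise limits $P_{3,i}\downarrow P_{3}$ and $P_{1,i}\uparrow P_{1}$; equicontinuity upgrades these to uniform convergence (Dini) and extracts a uniformly convergent subsequence $P_{2,i_{k}}\to P_{2}$ (Arzel\`a--Ascoli). The uniform bounds on $L_{1,i}^{-1},L_{2,i}^{-1},L_{5,i}^{-1}$ guarantee that the limiting $L_{1},L_{2},L_{5}$ remain invertible with bounded inverses, so every coefficient converges uniformly, and passing to the limit in the integrated (terminal-value) form of (\ref{eq-p1}), (\ref{eq-p3}), (\ref{eq-p4})---with $\tfrac{1}{i}I_{m}\to0$ in (\ref{eq-ter-i}) producing the correct terminal data $P_{1}(T)=G$, $P_{2}(T)=F$, $P_{3}(T)=0$---shows $(P_{1},P_{2},P_{3})$ solves the system. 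For uniqueness, on the open set where $L_{1},L_{2},L_{5}$ are invertible the right-hand side is a locally Lipschitz (rational) function of $(P_{1},P_{2},P_{3})$, so two solutions sharing the terminal data coincide by a backward Gronwall/Cauchy--Lipschitz argument; this uniqueness also forces the whole sequence $\{P_{2,i}\}$, not merely a subsequence, to converge to $P_{2}$. I expect the main obstacle to be precisely the control of the nonlinear inverses---ensuring that $L_{1},L_{2},L_{5}$ do not degenerate in the limit---which is exactly what the uniform-boundedness hypotheses are designed to supply and where the indefiniteness of the control weight precludes any automatic conclusion.
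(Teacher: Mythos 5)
Your proposal is correct, and its first half is essentially the paper's own argument: both compare the terminal data ${}_{i}\tilde{P}(T)$, propagate the order by the comparison result (Lemma \ref{le-com-rec}), and transfer it through the relations (\ref{eq-re}); your Schur-complement minimization characterization and the paper's block-inversion identity (the $(1,1)$ block of ${}_{i}\tilde{P}(t)^{-1}$ equals $P_{1,i}(t)^{-1}$) are two phrasings of the same fact. Where you genuinely diverge is in handling $P_{2,i}$ — the component with no monotonicity — and in concluding existence. The paper proves Proposition \ref{converge-P2i}: a Gronwall estimate on the consecutive differences $\Delta_{j,i}=P_{j,i+1}-P_{j,i}$, seeded by the terminal gap of order $\tfrac{1}{i(i+1)}$ in (\ref{eq-ter-i}), which makes the whole triple uniformly Cauchy with an explicit rate; the limit then solves (\ref{eq-p1}), (\ref{eq-p3}), (\ref{eq-p4}) by bounded convergence. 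You instead take a soft compactness route: uniform boundedness of the right-hand sides (exactly as you argue, all of $L_{6,i},\dots,L_{11,i}$ are controlled by the monotone bounds plus the assumed uniform bounds on $|P_{2,i}|$, $L_{1,i}^{-1}$, $L_{2,i}^{-1}$, $L_{5,i}^{-1}$) gives equicontinuity, Arzel\`a--Ascoli extracts a uniformly convergent subsequence of $P_{2,i}$, you pass to the limit along it, and full-sequence convergence is recovered a posteriori from uniqueness. Each approach buys something: the paper's contraction argument yields a quantitative rate and full-sequence convergence of $P_{2,i}$ without any appeal to uniqueness, while your route is more elementary and, importantly, supplies an explicit uniqueness argument (local Lipschitz continuity of the rational right-hand side on the open set where $L_{1},L_{2},L_{5}$ are invertible, plus backward Gronwall) — a point the theorem asserts but the paper's proof never actually establishes. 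Your observation that the uniform bounds on $L_{1,i}^{-1},L_{2,i}^{-1},L_{5,i}^{-1}$ force the limiting $L_{1},L_{2},L_{5}$ to stay invertible is exactly the nondegeneracy needed in both arguments.
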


\begin{proof}
It can be verified that
\[%
\begin{array}
[c]{rl}%
_{i+1}\tilde{P}(T) & =\left(
\begin{array}
[c]{ccc}%
G+(i+1)F^{\intercal}F &  & -(i+1)F^{\intercal}\\
-(i+1)F &  & (i+1)I_{m}%
\end{array}
\right) \\
& \geq\text{ }_{i}\tilde{P}(T)=\left(
\begin{array}
[c]{ccc}%
G+iF^{\intercal}F &  & -iF^{\intercal}\\
-iF &  & iI_{m}%
\end{array}
\right)  .
\end{array}
\]
By Lemma \ref{le-com-rec}, we have $_{i+1}\tilde{P}(t)\geq$ $_{i}\tilde{P}(t)$
which yields that $\tilde{P}_{3,i+1}(t)\geq\tilde{P}_{3,i}(t)$ and
$_{i+1}\tilde{P}(t)^{-1}\leq$ $_{i}\tilde{P}(t)^{-1}$. Moreover, note that
\[
\left(
\begin{array}
[c]{ccc}%
\tilde{P}_{1,i}(t) &  & \tilde{P}_{2,i}(t)^{\intercal}\\
\tilde{P}_{2,i}(t) &  & \tilde{P}_{3,i}(t)
\end{array}
\right)  ^{-1}=\left(
\begin{array}
[c]{cc}%
\left(  \tilde{P}_{1,i}(t)-\tilde{P}_{2,i}(t)^{\intercal}\tilde{P}%
_{3,i}(t)^{-1}\tilde{P}_{2,i}(t)\right)  ^{-1} & ...\\
... & ...
\end{array}
\right)  .
\]
By the relationship (\ref{eq-re}), we obtain that $P_{3,i+1}(t)\leq
P_{3,i}(t)$ and $P_{1,i+1}(t)\geq P_{1,i}(t)$.

Thus, $\{P_{3,i}(t)\}_{i\geq i_{0}}$ (resp. $\{P_{1,i}(t)\}_{i\geq i_{0}}$) is
a bounded deceasing (resp. increasing) sequence in $C\left(  \left[
0,T\right]  ;\mathbb{S}_{+}^{m}\right)  $ (resp. $C\left(  \left[  0,T\right]
;\mathbb{S}_{+}^{n}\right)  $ ) and therefore has a limit. The convergence of
$\{P_{2,i}(t)\}_{i\geq i_{0}}$ can be obtained by the following Proposition
\ref{converge-P2i}. Denote by $\left(  P_{1}(\cdot),P_{2}(\cdot),P_{3}%
(\cdot)\right)  $ the limit of $\{\left(  P_{1,i}(\cdot),P_{2,i}%
(\cdot),P_{3,i}(\cdot)\right)  \}_{i\geq i_{0}}$. By the bounded convergence
theorem, one can obtain that $\left(  P_{1}(\cdot),P_{2}(\cdot),P_{3}%
(\cdot)\right)  $ is the solution to (\ref{eq-p1}), (\ref{eq-p3}),
(\ref{eq-p4}). This completes the proof.
\end{proof}

\begin{corollary}
\label{Cor-equation Q}Suppose that Assumptions \ref{assum-bound},
\ref{assum-pos} and \ref{assum-pos-d4} hold. Moreover, suppose that
$P_{1,i}(\cdot)$ has upper bound and $\left\vert P_{2,i}(\cdot)\right\vert $,
$L_{1,i}(\cdot)^{-1}$ and $L_{2,i}(\cdot)^{-1}$ are uniformly bounded for each
$i\geq1$. Then the equation {(\ref{eq-p-til}) has a unique solution.}
\end{corollary}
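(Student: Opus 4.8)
The plan is to derive the corollary from Theorem~\ref{th-conver} and the identification in Remark~\ref{re-d4}, the point being that the positivity assumption~\ref{assum-pos-d4} forces precisely the three existence/boundedness hypotheses of Theorem~\ref{th-conver} that are absent from the present statement. First I would verify Assumption~\ref{assm-p-i-ex}: since $D_4>0$ and $C_4\geq0$ give $\tilde{R}+\tilde{D}^{\intercal}\tilde{D}>0$, the remark following Assumption~\ref{assm-p-i-ex} (through Theorem~4.1 of \cite{Chen-Z}) provides a positive definite solution ${}_i\tilde{P}(\cdot)$ of (\ref{eq-rec})--(\ref{eq-termi-p-t}) for every $i\geq i_0=1$. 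Hence Theorem~\ref{th-p-ex} applies and $(P_{1,i},P_{2,i},P_{3,i})$, $({}_i\varphi,{}_iv)$ are well defined and solve (\ref{eq-p1}), (\ref{eq-p3}), (\ref{eq-p4}), (\ref{eq-phi1}), (\ref{eq-phi2}).

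Next I would supply the two uniform bounds required by Theorem~\ref{th-conver} but not assumed here, namely boundedness of $(\tilde{R}+\tilde{D}^{\intercal}\,{}_i\tilde{P}\tilde{D})^{-1}=M_{1,i}^{-1}$ and of $L_{5,i}^{-1}$. For the former I use the monotonicity proved inside Theorem~\ref{th-conver}: from ${}_{i+1}\tilde{P}(t)\geq{}_i\tilde{P}(t)$ and congruence by $\tilde{D}$ one gets $M_{1,i+1}(t)\geq M_{1,i}(t)\geq M_{1,i_0}(t)$, and since $M_{1,i_0}(\cdot)$ is continuous and positive definite on $[0,T]$ (by the constraint in (\ref{eq-rec})) it is bounded below, so $M_{1,i}(t)^{-1}\leq M_{1,i_0}(t)^{-1}$ is uniformly bounded. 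For $L_{5,i}^{-1}$ the key observation is that $L_{5,i}$ is the effective weight on the genuine control $u$ obtained by eliminating the $Z$-component of the joint control $(u,Z)$; concretely $L_{5,i}^{-1}$ coincides with the top-left $k\times k$ block of $M_{1,i}^{-1}$ (the Schur-complement identity matching the algebra of (\ref{eq-Zbar})--(\ref{def-L1}) with (\ref{def-m1})). Boundedness of $M_{1,i}^{-1}$ therefore transfers to $L_{5,i}^{-1}$. Together with the directly assumed bounds on $P_{1,i}$, $|P_{2,i}|$, $L_{1,i}^{-1}$, $L_{2,i}^{-1}$, all hypotheses of Theorem~\ref{th-conver} hold, and it yields a unique $(P_1,P_2,P_3)$ solving (\ref{eq-p1}), (\ref{eq-p3}), (\ref{eq-p4}).

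Finally I would translate this into (\ref{eq-p-til}) via Remark~\ref{re-d4}. Under $D_4>0$, $C_4\geq0$ the decoupling (\ref{eq-Y-m}) is exactly $\tilde{Y}=Q\tilde{X}+\varphi$ with
\[
Q(t)=\begin{pmatrix} P_1(t) & P_2(t)^{\intercal}\\ P_2(t) & -P_3(t)\end{pmatrix},
\]
so I define $Q$ by this formula and check that the block decomposition of the matrix ODE (\ref{eq-p-til}) reproduces (\ref{eq-p1}) in the $(1,1)$ block, (\ref{eq-p2})/(\ref{eq-p3}) off the diagonal and (\ref{eq-p4}) in the $(2,2)$ block, the terminal value $Q(T)=\tilde{F}$ matching $(P_1(T),P_2(T),P_3(T))=(G,F,0)$. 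The uniform bounds on $L_{1,i}^{-1}$, $L_{2,i}^{-1}$ pass to the limit and, after matching $I_{n+m}-Q\tilde{C}_2$ with the block-triangular form built from $L_1,L_2$, guarantee that $(I_{n+m}-Q\tilde{C}_2)^{-1}$ exists and is bounded; hence $K(\cdot)$ in (\ref{eq-p-til}) is well defined and $Q$ is a genuine $C([0,T])$ solution. Uniqueness is then immediate, since (\ref{eq-p-til}) is a Riccati-type matrix ODE with locally Lipschitz right-hand side wherever $(I_{n+m}-Q\tilde{C}_2)^{-1}$ exists, so the backward solution from $Q(T)=\tilde{F}$ is unique. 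The step I expect to cause the most trouble is the Schur-complement identification of $L_{5,i}^{-1}$ with a block of $M_{1,i}^{-1}$: it requires reconciling the two parametrizations of the control elimination---the simultaneous $(u,Z)$-minimization behind (\ref{def-m1}) and the sequential elimination of $n,\bar{Z},u$ in (\ref{eq-Zbar})--(\ref{def-L1})---and is where the bookkeeping is least transparent.
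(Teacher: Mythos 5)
Your proposal is correct and follows essentially the paper's own route: verify Assumption \ref{assm-p-i-ex} from $D_{4}>0$, $C_{4}\geq0$ (via Theorem 4.1 of \cite{Chen-Z}, as in the remark following that assumption), supply the remaining boundedness hypotheses of Theorem \ref{th-conver}, and conclude through Remark \ref{re-d4}. The only divergence is the uniform bound on $L_{5,i}(\cdot)^{-1}$, which the paper dispatches in one line --- since $D_{2}^{\intercal}L_{2,i}^{-1}S_{1,i}D_{2}\geq0$ one has $L_{5,i}\geq D_{4}$, hence $L_{5,i}^{-1}\leq D_{4}^{-1}$ --- whereas you reach the same conclusion by identifying $L_{5,i}^{-1}$ with the top-left block $a_{11}(\cdot)$ of $M_{1,i}(\cdot)^{-1}$ (the paper's identity (\ref{app-5})) together with the monotonicity of $_{i}\tilde{P}(\cdot)$ in $i$ from Lemma \ref{le-com-rec}; both arguments are valid, the paper's being shorter.
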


\begin{proof}
By Remark \ref{new-rem1}, Assumption \ref{assm-p-i-ex} holds. Since $D_{4}>0$,
it is easy to verify that $L_{5,i}(\cdot)^{-1}\leq D_{4}^{-1}$ for each
$i\geq1$. By {Remark \ref{re-d4} and Theorem \ref{th-conver}, }then the
equation {(\ref{eq-p-til}) has a unique solution.}
\end{proof}

\begin{proposition}
\label{converge-P2i}Suppose that all assumptions in Theorem \ref{th-conver}
hold. Then for $i\geq i_{0}$, we have%
\[%
\begin{array}
[c]{c}%
\left\vert P_{1}(t)-P_{1,i}(t)\right\vert +\left\vert P_{2}(t)-P_{2,i}%
(t)\right\vert +\left\vert P_{3}(t)-P_{3,i}(t)\right\vert \leq Ci^{-2}.
\end{array}
\]
where $\left(  P_{1}(\cdot),P_{2}(\cdot),P_{3}(\cdot)\right)  $ is the limit
of $\{P_{1,i}(\cdot)$, $P_{2,i}(\cdot),$ $P_{3,i}(\cdot)\}_{i\geq i_{0}}$ and
$C$ is a constant independent of $i$.
\end{proposition}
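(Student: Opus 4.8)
The plan is to derive a closed linear ODE system for the differences of the approximations and then read off the rate from the order of their terminal data via Gronwall's inequality. The cleanest route is to compare two consecutive indices: I would subtract the equations (\ref{eq-p1}), (\ref{eq-p3}), (\ref{eq-p4}) written for $(P_{1,i+1},P_{2,i+1},P_{3,i+1})$ from the same equations written for $(P_{1,i},P_{2,i},P_{3,i})$, and set $\Delta_{j,i}(\cdot):=P_{j,i}(\cdot)-P_{j,i+1}(\cdot)$ for $j=1,2,3$. Every coefficient block $L_{1}(\cdot),\dots,L_{11}(\cdot)$ and $S_{1}(\cdot),S_{2}(\cdot)$ appearing in the right-hand sides is a rational expression in $(P_1,P_2,P_3)$ with denominators $L_1,L_2,L_5$, so each block difference $L_{k,i}-L_{k,i+1}$ expands, via the elementary identities for differences of products and of inverses, into a sum of terms each carrying at least one factor $\Delta_{j,i}$ while the remaining factors stay uniformly bounded. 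The uniform bounds needed are exactly those postulated in Theorem \ref{th-conver}, namely the upper bound on $P_{1,i}$ and the uniform bounds on $|P_{2,i}|$, $L_{1,i}^{-1}$, $L_{2,i}^{-1}$, $L_{5,i}^{-1}$, together with Assumption \ref{assum-bound}. Hence the subtracted system takes the form $\dot\Delta_{j,i}(t)=\sum_{k}\Theta_{jk}(t)\Delta_{k,i}(t)$ with matrices $\Theta_{jk}(\cdot)$ bounded uniformly in $i$.

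The decisive step is to record the terminal data. Because $P_{1,i}(T)=P_{1,i+1}(T)=G$ and $P_{2,i}(T)=P_{2,i+1}(T)=F$, one has $\Delta_{1,i}(T)=\Delta_{2,i}(T)=0$, while $P_{3,i}(T)=\tfrac1iI_m$ and $P_{3,i+1}(T)=\tfrac1{i+1}I_m$ give $\Delta_{3,i}(T)=\big(\tfrac1i-\tfrac1{i+1}\big)I_m=\tfrac1{i(i+1)}I_m$, which is $O(i^{-2})$. Since the only nonzero terminal datum is already of order $i^{-2}$ and the coefficients $\Theta_{jk}$ are uniformly bounded, Gronwall's inequality applied to the linear system on $[t,T]$ yields $\sup_{0\le s\le T}\big(|\Delta_{1,i}(s)|+|\Delta_{2,i}(s)|+|\Delta_{3,i}(s)|\big)\le C\,i^{-2}$ with $C$ independent of $i$. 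This consecutive-difference bound is summable, so $\{(P_{1,i},P_{2,i},P_{3,i})\}_{i\ge i_0}$ is Cauchy in $C([0,T])$; in particular it supplies the convergence of the non-monotone sequence $\{P_{2,i}\}$ invoked in Theorem \ref{th-conver}, and the common limit is identified with $(P_1,P_2,P_3)$. Running the same Gronwall computation with the limiting triple in place of the $(i+1)$-th (now legitimate, the limit being known to solve (\ref{eq-p1}), (\ref{eq-p3}), (\ref{eq-p4}) with bounded coefficients) produces the asserted estimate for $|P_{j}(t)-P_{j,i}(t)|$.

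The main obstacle I expect is the first step: establishing that the rational coefficient blocks are genuinely Lipschitz in $(P_1,P_2,P_3)$ with constants uniform in $i$. This reduces to controlling the differences of the inverse factors through $X^{-1}-Y^{-1}=X^{-1}(Y-X)Y^{-1}$, which converts a bound on $L_{k,i}-L_{k,i+1}$, hence on the $\Delta_{j,i}$, into a bound on $L_{k,i}^{-1}-L_{k,i+1}^{-1}$ at the cost of the two uniform inverse bounds for $L_{1,\cdot}^{-1}$, $L_{2,\cdot}^{-1}$ (and $L_{5,\cdot}^{-1}$). It is precisely here that the monotonicity $P_{3,i}\ge P_{3,i+1}\ge0$ and $0\le P_{1,i}\le P_{1,i+1}$ from Theorem \ref{th-conver}, together with the postulated uniform bounds, guarantees that these constants do not degenerate as $i\to\infty$, keeping the Gronwall constant $C$ independent of $i$. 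Once this uniform Lipschitz structure is secured, the rate follows immediately from the $O(i^{-2})$ terminal datum.
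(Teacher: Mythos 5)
Your core argument --- setting $\Delta_{j,i}=P_{j,i}-P_{j,i+1}$, expanding the differences of the rational coefficient blocks via $X^{-1}-Y^{-1}=X^{-1}(Y-X)Y^{-1}$ under the uniform bounds assumed in Theorem \ref{th-conver}, recording $\Delta_{1,i}(T)=\Delta_{2,i}(T)=0$ and $\Delta_{3,i}(T)=\frac{1}{i(i+1)}I_{m}$, and applying Gronwall's inequality backward from $T$ --- is exactly the paper's proof. The paper does precisely this and nothing more: it obtains $\left\vert \Delta_{1,i}(t)\right\vert +\left\vert \Delta_{2,i}(t)\right\vert +\left\vert \Delta_{3,i}(t)\right\vert \leq Ci^{-2}$ with $C=me^{C^{\prime}T}$, i.e.\ a bound on \emph{consecutive} differences, and stops there.

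The genuine gap is in your final step. ``Running the same Gronwall computation with the limiting triple in place of the $(i+1)$-th'' cannot produce the asserted $Ci^{-2}$ estimate: against the limit, the terminal discrepancy is no longer $\frac{1}{i(i+1)}I_{m}$ but $P_{3,i}(T)-P_{3}(T)=\frac{1}{i}I_{m}$ (since $P_{3}(T)=0$), which is of size $i^{-1}$, so the identical computation yields only $Ci^{-1}$. Telescoping your consecutive bounds fares no better, since $\sum_{k\geq i}\frac{C}{k(k+1)}=\frac{C}{i}$. In fact no argument can close this gap, because the estimate stated relative to the limit fails outright at $t=T$: there $\left\vert P_{3}(T)-P_{3,i}(T)\right\vert =\frac{1}{i}\left\vert I_{m}\right\vert$ is exactly of order $i^{-1}$, contradicting a bound $\leq Ci^{-2}$ for large $i$. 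The proposition is therefore only coherent --- and is only actually proved in the paper --- as the consecutive-difference estimate that your Gronwall step establishes; to that extent your proposal reproduces the paper's proof, but your closing sentence claiming the $i^{-2}$ rate with respect to the limit is an error, one which the paper's statement shares while its proof silently avoids it by never passing to the limit.
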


\begin{proof}
Set $\Delta_{1,i}(t)=P_{1,i+1}(t)-P_{1,i}(t)$, $\Delta_{2,i}(t)=P_{2,i+1}%
(t)-P_{2,i}(t)$, $\Delta_{3,i}(t)=P_{3,i+1}(t)-P_{3,i}(t)$. By (\ref{eq-p1}),
(\ref{eq-p3}), (\ref{eq-p4}) and the boundedness assumptions, we have%
\begin{equation}%
\begin{array}
[c]{l}%
\left\vert \Delta_{1,i}(t)\right\vert +\left\vert \Delta_{2,i}(t)\right\vert
+\left\vert \Delta_{3,i}(t)\right\vert \\
\leq\frac{m}{i(i+1)}+C^{\prime}\int_{t}^{T}\left(  \left\vert \Delta
_{1,i}(s)\right\vert +\left\vert \Delta_{2,i}(s)\right\vert +\left\vert
\Delta_{3,i}(s)\right\vert \right)  ds,
\end{array}
\label{delta-123}%
\end{equation}
where $C^{\prime}$ is a constant independent of $i$. Then, by Gronwall's
inequality we have
\[
\left\vert \Delta_{1,i}(t)\right\vert +\left\vert \Delta_{2,i}(t)\right\vert
+\left\vert \Delta_{3,i}(t)\right\vert \leq Ci^{-2},
\]
where $C=me^{C^{\prime}T}.$
\end{proof}

\begin{remark}
Since $\{P_{1,i}(t)\}_{i\geq i_{0}}$ is increasing and $\{P_{3,i}(t)\}_{i\geq
i_{0}}$ is decreasing, the sequence $\{_{i}P(t)\}_{i\geq i_{0}}$ is not
monotonic which is different from the indefinite stochastic LQ problem in
\cite{Chen-LZ}. With the help of the solutions $\{_{i}\tilde{P}(t)\}_{i\geq
i_{0}}$ to the auxiliary Riccati equations, we study the components of
$_{i}P(t)$ and prove the existence of the solutions to (\ref{eq-p1}),
(\ref{eq-p3}), (\ref{eq-p4}).
\end{remark}

Now we give an example to show that there exists a unique solution to
(\ref{eq-p1}), (\ref{eq-p3}), (\ref{eq-p4}).

\begin{example}
Consider a special case of problem (\ref{state-lq})-(\ref{cost-lq}) in which
the controlled system is governed by a partially coupled FBSDE. Suppose that
$n=m=1$, $B_{1}(t)=C_{1}(t)=B_{2}(t)=C_{2}(t)=0$, $D_{2}(t)^{2}\geq\delta>0$,
$C_{4}(t)\geq\delta>0$ and $D_{4}(t)\geq\delta>0$. Due to $C_{2}\equiv0$,
$L_{1,i}(t)=I_{m}+P_{3,i}(t)C_{4}(t)$ and $L_{2,i}(t)\equiv I_{n}$ are
invertible and bounded. It is easy to verify the other assumptions in Theorem
\ref{th-conver} except that $P_{1,i}(\cdot)$ has upper bound and $\left\vert
P_{2,i}(\cdot)\right\vert $ is bounded. Note that $\left(  P_{1,i}%
(\cdot),P_{2,i}(\cdot),P_{3,i}(\cdot)\right)  $ satisfies the following
equations:%
\[%
\begin{array}
[c]{rl}%
dP_{1,i}(t)= & -\left\{  \left(  2A_{1}(t)+A_{2}(t)^{2}\right)  P_{1,i}%
(t)+A_{4}(t)\right. \\
& +\left[  B_{4}(t)+A_{2}(t)^{2}C_{4}(t)(1+P_{3,i}(t)C_{4}(t))^{-1}\right]
P_{2,i}(t)^{2}\\
& -\frac{1}{\Delta}\left[  \left(  D_{1}(t)+A_{2}(t)D_{2}(t)\right)
P_{1,i}(t)\right. \\
& \left.  \left.  +A_{2}(t)D_{2}(t)C_{4}(t)P_{2,i}(t)^{2}(1+P_{3,i}%
(t)C_{4}(t))^{-1}\right]  ^{2}\right\}  dt,
\end{array}
\]%
\[%
\begin{array}
[c]{rl}%
dP_{2,i}(t)= & -\left\{  P_{2,i}(t)\left[  A_{1}(t)-B_{4}(t)P_{3,i}%
(t)+B_{3}(t)+C_{3}(t)A_{2}(t)(1+P_{3,i}(t)C_{4}(t))^{-1}\right]  \right. \\
& +A_{3}(t)-\frac{1}{\Delta}\left[  \left(  D_{1}(t)+D_{2}(t)C_{3}%
(t)(1+P_{3}(t)C_{4}(t))^{-1}\right)  \right. \\
& \cdot D_{2}(t)A_{2}(t)C_{4}(t)(1+P_{3}(t)C_{4}(t))^{-1}P_{2,i}(t)^{3}\\
& +D_{2}(t)D_{3}(t)A_{2}(t)C_{4}(t)(1+P_{3}(t)C_{4}(t))^{-1}P_{2,i}%
(t)^{2}+\left(  D_{1}(t)+D_{2}(t)A_{2}(t)\right)  P_{1,i}(t)\\
& \cdot\left(  D_{1}(t)+D_{2}(t)C_{3}(t)(1+P_{3,i}(t)C_{4}(t))^{-1}\right)
P_{2,i}(t)\\
& \left.  \left.  +\left(  D_{1}(t)+D_{2}(t)A_{2}(t)\right)  D_{3}%
(t)P_{1,i}(t)\right]  \right\}  dt,
\end{array}
\]%
\[%
\begin{array}
[c]{rl}%
dP_{3,i}(t)= & -\left\{  -B_{4}(t)P_{3,i}(t)^{2}+\left[  2B_{3}(t)+C_{3}%
(t)^{2}(1+P_{3,i}(t)C_{4}(t))^{-1}\right]  P_{3,i}(t)\right. \\
& \left.  +\frac{1}{\Delta}\left[  P_{2,i}(t)D_{1}(t)+D_{3}(t)+C_{3}%
(t)D_{2}(t)P_{2,i}(t)(1+P_{3,i}(t)C_{4}(t))^{-1}\right]  ^{2}\right\}  dt,
\end{array}
\]
where
\[
\Delta=D_{4}(t)+D_{2}(t)^{2}P_{1,i}(t)+C_{4}(t)D_{2}(t)^{2}(1+P_{3,i}%
(t)C_{4}(t))^{-1}P_{2,i}(t)^{2}.
\]
{By Theorem \ref{th-conver}, $P_{3,i}(\cdot)$ is bounded. }Then one can check
that
\[
\left\vert P_{2,i}(t)\right\vert \leq\left\vert F\right\vert +C\int_{0}%
^{T}\left(  \left\vert P_{2,i}(s)\right\vert +1\right)  ds,
\]
where $C$ is a constant independent of $i$. By Gronwall's inequality,
$\left\vert P_{2,i}(\cdot)\right\vert $ is bounded. Because
\[
0\leq P_{1,i}(t)\leq G+C\int_{0}^{T}\left(  P_{1,i}(s)+1\right)  ds,
\]
where $C$ is a constant independent of $i$, we deduce that $P_{1,i}(\cdot)$
has a upper bound by Gronwall's inequality. Thus, (\ref{eq-p1}),
(\ref{eq-p3}), (\ref{eq-p4}) have a unique solution $\left(  P_{1}%
(\cdot),P_{2}(\cdot),P_{3}(\cdot)\right)  $ by Theorem \ref{th-conver}.
\end{example}

\section{Feedback optimal control for FBLQ problem\label{section-feedback}}

In this section, we prove the existence of optimal control without the
positiveness of $C_{4}(\cdot)$ and $D_{4}(\cdot)$. We first give the following lemma.

\begin{lemma}
\label{le-gamma}Suppose all assumptions in Theorem \ref{th-conver} hold. Then
$\left\{  \mathbb{E}\int_{0}^{T}|M_{5,i}(t)|dt\right\}  _{i\geq i_{0}}$ is
uniformly bounded, where $M_{5,i}(\cdot)$ is defined by replacing $\tilde
{P}(\cdot)$ with $_{i}\tilde{P}(\cdot)$ in (\ref{def-m1}).
\end{lemma}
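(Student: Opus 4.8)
The plan is to exploit the completion-of-squares structure that produced $M_5$ in the first place, rather than to estimate its constituent pieces one at a time (each of which is singular as $i\to\infty$). First I would record a pointwise identity for $M_{5,i}$. Applying It\^{o}'s formula to ${}_{i}\tilde{\varphi}(t)^{\intercal}{}_{i}\tilde{P}(t){}_{i}\tilde{\varphi}(t)$ along the BSDE (\ref{eq-phi-til}) and using that ${}_{i}\tilde{P}$ is deterministic (so only the $\tilde\gamma$- and $\tilde v$-terms survive), the $dt$-drift $\mu_{i}(\cdot)$ of $d({}_{i}\tilde{\varphi}^{\intercal}{}_{i}\tilde{P}{}_{i}\tilde{\varphi})$ equals $-2\,{}_i\tilde\gamma^{\intercal}{}_i\tilde{P}{}_i\tilde\varphi+{}_i\tilde\varphi^{\intercal}\,(\text{time-derivative of }{}_i\tilde{P})\,{}_i\tilde\varphi+{}_i\tilde v^{\intercal}{}_i\tilde{P}{}_i\tilde v$. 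Comparing this with the definition (\ref{def-m1}) and using the symmetry of ${}_{i}\tilde{P}$ gives the pointwise identity
\[
M_{5,i}(t)=\mu_{i}(t)-\left(M_{3,i}(t){}_{i}\tilde{\varphi}(t)+M_{4,i}(t){}_{i}\tilde{v}(t)\right)^{\intercal}M_{1,i}(t)^{-1}\left(M_{3,i}(t){}_{i}\tilde{\varphi}(t)+M_{4,i}(t){}_{i}\tilde{v}(t)\right).
\]

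Next I would pass to the variables $(\varphi_{1,i},\varphi_{2,i},v_{1,i},v_{2,i})$, which are the genuinely bounded objects. Using the substitutions of Theorem \ref{th-p-ex} (namely (\ref{eq-re}) together with $\varphi_{1,i}=-P_{1,i}\tilde{\varphi}_{1,i}$, $\varphi_{2,i}=\tilde{\varphi}_{2,i}-P_{2,i}\tilde{\varphi}_{1,i}$ and the analogous relations for $v$), a direct computation collapses the quadratic form to
\[
{}_{i}\tilde{\varphi}^{\intercal}{}_{i}\tilde{P}{}_{i}\tilde{\varphi}=\varphi_{1,i}^{\intercal}P_{1,i}^{-1}\varphi_{1,i}+\varphi_{2,i}^{\intercal}P_{3,i}^{-1}\varphi_{2,i},
\]
while Lemma \ref{le-con-equ} rewrites $M_{1,i}^{-1}(M_{3,i}{}_{i}\tilde{\varphi}+M_{4,i}{}_{i}\tilde{v})$ through $\varphi_{2,i}$, $S_{3,i}$ and $S_{5,i}$. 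Under the hypotheses of Theorem \ref{th-conver} the coefficients $P_{1,i},P_{2,i},P_{3,i},L_{1,i}^{-1},L_{2,i}^{-1},L_{5,i}^{-1}$ and the $S_{\cdot,i}$ are uniformly bounded, so (\ref{eq-phi1})--(\ref{eq-phi2}) is a linear BSDE system with uniformly bounded Lipschitz drivers and terminal data $(0,\xi)$; a standard energy estimate then yields $\mathbb{E}[\sup_{t}(|\varphi_{1,i}|^{2}+|\varphi_{2,i}|^{2})]+\mathbb{E}\int_{0}^{T}(|v_{1,i}|^{2}+|v_{2,i}|^{2})\,dt\le C$ with $C$ independent of $i$.

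The main obstacle is the behaviour near $t=T$: since $P_{3,i}(T)^{-1}=iI_{m}$, the factor $P_{3,i}^{-1}$ — and with it $\mu_{i}$ and the quadratic term taken separately — is of order $i$ on a boundary layer of width $O(1/i)$, so a naive term-by-term estimate diverges (the factor $P_{1,i}^{-1}$ is the milder issue, kept under control near $T$ by $G>0$). The resolution is precisely the completion-of-squares cancellation: $M_{5,i}$ is the difference of these two singular quantities, and substituting the Riccati equation (\ref{eq-rec}) (solved for the time-derivative of ${}_{i}\tilde{P}$) and the $\tilde\gamma$-drift formula into $\mu_{i}$ and collecting terms, the contributions proportional to $P_{3,i}^{-1}\varphi_{2,i}$ coming from $\mu_{i}$ match those coming from $(M_{3,i}{}_{i}\tilde{\varphi}+M_{4,i}{}_{i}\tilde{v})^{\intercal}M_{1,i}^{-1}(\cdots)$ and cancel. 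After this cancellation $M_{5,i}$ is controlled by a quadratic form in $(\varphi_{1,i},\varphi_{2,i},v_{1,i},v_{2,i})$ whose coefficients are integrable in $t$ uniformly in $i$ (any residual singularity being confined to the $O(1/i)$ layer, hence contributing only $O(1)$ to the time integral). Combined with the uniform BSDE bounds above, this gives $\mathbb{E}\int_{0}^{T}|M_{5,i}|\,dt\le C$ uniformly in $i\ge i_{0}$.

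As an independent check on sign and magnitude, integrating the pointwise identity and using the terminal-cost identity ${}_{i}\tilde{\varphi}(T)^{\intercal}{}_{i}\tilde{P}(T){}_{i}\tilde{\varphi}(T)=i|\xi|^{2}$ one recovers $\mathbb{E}\int_{0}^{T}M_{5,i}\,dt=2V_{i}-(\tilde{X}(0)-{}_{i}\tilde{\varphi}(0))^{\intercal}{}_{i}\tilde{P}(0)(\tilde{X}(0)-{}_{i}\tilde{\varphi}(0))$, with $V_{i}$ the optimal value of the penalized problem (\ref{obj-pen}); since $V_{i}$ is nondecreasing in $i$ and dominated by the cost of any control satisfying $Y(T)=FX(T)+\xi$ while the initial term converges, the \emph{signed} integral is uniformly bounded, consistent with the stronger bound on $\mathbb{E}\int_{0}^{T}|M_{5,i}|\,dt$ established above. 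The delicate point throughout is thus the exact cancellation of the $P_{3,i}^{-1}$-singular terms, which is what upgrades the easy signed bound to the absolute-value bound claimed.
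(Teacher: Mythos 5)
Your proposal is correct and is essentially the paper's own argument: the proof in Appendix 7.3 likewise rewrites $M_{5,i}$ in the variables $(\varphi_{1,i},\varphi_{2,i},v_{1,i},v_{2,i})$ using the relations of Theorem \ref{th-p-ex} together with Lemma \ref{le-con-equ} (including the collapse ${}_{i}\tilde{\varphi}^{\intercal}\,{}_{i}\tilde{P}\,{}_{i}\tilde{\varphi}=\varphi_{1,i}^{\intercal}P_{1,i}^{-1}\varphi_{1,i}+\varphi_{2,i}^{\intercal}P_{3,i}^{-1}\varphi_{2,i}$), then carries out exactly the cancellation of the $P_{3,i}(t)^{-1}$-singular terms you identify as the delicate point, via the two verified identities (\ref{eq-022301}) and (\ref{eq-022302}), and finally concludes from the uniform boundedness of $a_{11}$, the $\beta_{j,i}$, and of $\mathbb{E}\int_{0}^{T}(|\lambda_{1,i}|^{2}+|\varphi_{1,i}|^{2}+|\varphi_{2,i}|^{2}+|v_{1,i}|^{2}+|v_{2,i}|^{2})\,dt$. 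The only differences are presentational: you package the definition (\ref{def-m1}) as the It\^{o} drift of ${}_{i}\tilde{\varphi}^{\intercal}\,{}_{i}\tilde{P}\,{}_{i}\tilde{\varphi}$ and append a value-function consistency check for the signed integral, whereas the paper performs the cancellation by direct algebraic computation.
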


The proof is in Appendix 7.3.

\begin{theorem}
\label{th-infi} Suppose Assumption \ref{assum-1} and all assumptions in
Theorem \ref{th-conver} hold, and $P_{3}(t)=\underset{i\rightarrow\infty
}{\lim}P_{3,i}(t)>0$, for $t\in\lbrack0,T)$. Then there exists an optimal
control $\bar{u}(\cdot)$ for the FBLQ problem (\ref{state-lq})-(\ref{cost-lq}%
). {Furthermore, any optimal control $\bar{u}(\cdot)$ satisfies}
\begin{equation}%
\begin{array}
[c]{rl}%
\bar{u}(t)= & \left[  L_{6}(t)+L_{7}(t)P_{3}(t)^{-1}P_{2}(t)\right]  \bar
{X}(t)\\
& -L_{7}(t)P_{3}(t)^{-1}\bar{Y}(t)+L_{7}(t)P_{3}(t)^{-1}\varphi_{2}%
(t)+S_{3}(t),
\end{array}
\label{eq-0113-2}%
\end{equation}
where $P_{1}(t)=\underset{i\rightarrow\infty}{\lim}P_{1,i}(t)$, $P_{2}%
(t)=\underset{i\rightarrow\infty}{\lim}P_{2,i}(t)$, and $L_{6}(t)$, $L_{7}%
(t)$, $S_{3}(t)$ are defined in (\ref{def-L1}).
\end{theorem}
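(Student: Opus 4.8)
The plan is to realize the optimal control as the $i\to\infty$ limit of the optimal controls of the penalized auxiliary problems (\ref{obj-pen}), and to prove optimality by a completion-of-squares identity carried out at each level $i$ and then passed to the limit. First I would fix $i\ge i_0$ and treat the forward LQ problem with cost $J_i$ in (\ref{obj-pen}). By Assumption \ref{assm-p-i-ex} the Riccati equation (\ref{eq-rec}) has a positive definite solution $\,{}_i\tilde P(\cdot)$, so $M_{1,i}=\tilde R+\tilde D^\intercal{}_i\tilde P\tilde D>0$, and the completion-of-squares computation preceding (\ref{eq-rec}) gives, for every admissible $(u,Z)$ with forward state $\tilde X=(X^\intercal,Y^\intercal)^\intercal$,
\[
2J_i(u,Z)=\big(\tilde X(0)-{}_i\tilde\varphi(0)\big)^\intercal{}_i\tilde P(0)\big(\tilde X(0)-{}_i\tilde\varphi(0)\big)+\mathbb{E}\!\int_0^T\!\big|\tilde u-\Theta_i\big|_{M_{1,i}}^2\,dt+\mathbb{E}\!\int_0^T\! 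M_{5,i}\,dt,
\]
where $\Theta_i=-M_{1,i}^{-1}\big(M_{2,i}\tilde X-M_{3,i}{}_i\tilde\varphi-M_{4,i}{}_i\tilde v\big)$ is the feedback (\ref{eq-con-th}) and $|\cdot|^2_{M_{1,i}}$ denotes the $M_{1,i}$-weighted square. Since $M_{1,i}>0$ the unique minimizer is $\tilde u=\Theta_i$; expressing its first (control) row through Lemma \ref{le-con-equ} and the identities (\ref{eq-re}) recasts the level-$i$ optimizer $\bar u_i$ exactly in the feedback form (\ref{eq-0113-2}) with all objects carrying the subscript $i$ (here $P_{3,i}=\tilde P_{3,i}^{-1}>0$ is invertible, which legitimizes the inversion at finite $i$).

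For the original problem any admissible $u(\cdot)$ satisfies $Y(T)=FX(T)+\xi$, so its penalty vanishes and $2J(u)=2J_i(u,Z)+\langle HY(0),Y(0)\rangle$. Combined with the displayed identity this gives
\[
2J(u)=q_i(Y(0))+\mathbb{E}\!\int_0^T\!\big|\tilde u-\Theta_i\big|_{M_{1,i}}^2\,dt+\mathbb{E}\!\int_0^T\! M_{5,i}\,dt,
\]
where $q_i$ is the quadratic in the free initial value $Y(0)$ built from the first term together with $\langle HY(0),Y(0)\rangle$. As $M_{1,i}>0$ this yields $2J(u)\ge\min_{y}q_i(y)+\mathbb{E}\int_0^T M_{5,i}\,dt$, with equality along $\tilde u=\Theta_i$ and the minimizing $Y(0)$ (which reproduces the initial condition $h(0)=HY(0)$). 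I would then let $i\to\infty$: by Proposition \ref{converge-P2i} the coefficients $P_{k,i}\to P_k$ at rate $i^{-2}$, so $\Theta_i$ and the closed-loop state converge to the candidate constructed from $(P_1,P_2,P_3)$; Lemma \ref{le-gamma} bounds $\mathbb{E}\int_0^T|M_{5,i}|\,dt$ uniformly; and since $\,{}_i\tilde P(0)$ is uniformly bounded (via (\ref{eq-re}) and Theorem \ref{th-conver}), $\min J_i$ stays in a fixed interval, forcing the penalty $i\,\mathbb{E}|Y_i(T)-FX_i(T)-\xi|^2$ to remain bounded and hence $\mathbb{E}|Y_i(T)-FX_i(T)-\xi|^2\to0$. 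Thus the limiting closed-loop state satisfies the terminal constraint, the limit $\bar u$ is admissible for the FBLQ problem, and it attains the limiting lower bound, proving existence.

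For the second assertion, let $\bar u(\cdot)$ be any optimal control and $(\bar X,\bar Y,\bar Z,h,m,n)$ the associated solution of (\ref{eq-ham}), so that (\ref{eq-con-ne}) holds (the maximum principle is a necessary condition regardless of the sign of $D_4$). Applying It\^o's formula to $P_1\bar X+P_2^\intercal h+\varphi_1$ and to $P_2\bar X-P_3h+\varphi_2$ and using that $(P_1,P_2,P_3)$ solve (\ref{eq-p1}), (\ref{eq-p3}), (\ref{eq-p4}) and $(\varphi_1,\varphi_2)$ solve (\ref{eq-phi1}), (\ref{eq-phi2}), one checks that these processes obey the same equations and the same boundary data as $m$ and $\bar Y$ (note $P_1(T)=G,\ P_2(T)=F,\ P_3(T)=0,\ \varphi_1(T)=0,\ \varphi_2(T)=\xi$); by the uniqueness in Assumption \ref{assum-1} the decoupling (\ref{eq-Y-m}) holds along the optimal trajectory. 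Since $P_3(t)>0$ on $[0,T)$, the second relation in (\ref{eq-Y-m}) can be solved for $h(t)=P_3(t)^{-1}\big(P_2(t)\bar X(t)+\varphi_2(t)-\bar Y(t)\big)$; inserting this into (\ref{eq-u-infi}) (which is (\ref{eq-con-ne}) rewritten after eliminating $m,n,\bar Z$) produces precisely (\ref{eq-0113-2}). As its right-hand side depends only on $(\bar X,\bar Y)$ and the data, the feedback law is identical for every optimal control.

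The main obstacle is the passage $i\to\infty$. The terminal weight $\,{}_i\tilde P(T)$ and the penalty both grow like $i$, while the candidate feedback is built from the limit $(P_1,P_2,P_3)$ rather than from $(P_{1,i},P_{2,i},P_{3,i})$; consequently the candidate's deviation $|\bar{\tilde u}-\Theta_i|^2_{M_{1,i}}$ from the level-$i$ optimizer is nonzero and must be shown to integrate to a negligible quantity. Controlling it requires the quadratic rate $|P_k-P_{k,i}|\le Ci^{-2}$ of Proposition \ref{converge-P2i} to defeat the $O(i)$ growth, together with the uniform $L^1$ bound on $M_{5,i}$ from Lemma \ref{le-gamma} and the uniform boundedness of $\,{}_i\tilde P(0)$; these same estimates are exactly what force the terminal constraint to hold in the limit and thereby deliver an admissible optimal control.
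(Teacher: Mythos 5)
Your broad strategy (completion of squares with the level-$i$ auxiliary Riccati solutions, a lower bound over admissible controls via Fatou, and identification of the minimizer) is the same as the paper's, and your lower-bound step is essentially the paper's Step 2: for FBLQ-admissible controls the penalty term vanishes identically because $Y(T)=FX(T)+\xi$, exactly as in (\ref{eq-j-ub})--(\ref{eq-128-1}). But your two deviations from the paper both contain genuine gaps, and the more serious one is your proof of the second assertion. You invoke the stochastic maximum principle as a necessary condition and then claim the decoupling (\ref{eq-Y-m}) holds along an arbitrary optimal trajectory ``by the uniqueness in Assumption \ref{assum-1}''. This does not work in the indefinite setting: $D_4(t)$ need not be invertible, so (\ref{eq-con-ne}) cannot be solved for $\bar{u}$, the Hamiltonian system (\ref{eq-ham}) cannot be closed in $(\bar{X},\bar{Y},\bar{Z},h,m,n)$ alone, and your It\^{o}-verification of the decoupled expressions cannot even be set up; moreover, Assumption \ref{assum-1} asserts uniqueness of (\ref{state-lq}) and (\ref{eq-hmn}) \emph{for a given control}, not uniqueness of solutions of the fully coupled system (\ref{eq-ham}), which is what your argument would require (the elimination of $n$ and $\bar{Z}$ leading to (\ref{eq-u-infi}) is itself derived \emph{from} the ansatz (\ref{eq-Y-m}), so the reasoning is circular). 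The paper's mechanism is designed precisely to avoid this: since $J(\check{u})=J(\bar{u})$ and the quadratic term in the completion-of-squares identity is nonnegative ($M_{1,i}>0$), that term must vanish, giving $(\check{u},\check{Z})^{\intercal}=-M_{1}^{-1}\left(M_{2}(\check{X}^{\intercal},\check{Y}^{\intercal})^{\intercal}-M_{3}\tilde{\varphi}-M_{4}\tilde{v}\right)$ directly, and Lemma \ref{le-con-equ} converts this to (\ref{eq-0113-2}) with no appeal to the maximum principle at all.

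Your existence argument also has unresolved holes. (i) You deduce boundedness of the penalty $i\,\mathbb{E}|Y_{i}(T)-FX_{i}(T)-\xi|^{2}$ from boundedness of $\min J_{i}$; but $C_4$ and $D_4$ are indefinite, so boundedness of the sum does not bound the nonnegative penalty unless the non-penalty part is bounded below along the level-$i$ optima, which is exactly what is in question. (A correct route: along the level-$i$ optimum the decoupling gives $Y_{i}(T)-FX_{i}(T)-\xi=-\frac{1}{i}h_{i}(T)$ since $P_{2,i}(T)=F$, $P_{3,i}(T)=\frac{1}{i}I_{m}$, $\varphi_{2,i}(T)=\xi$, so the penalty is $\frac{1}{2i}\mathbb{E}|h_{i}(T)|^{2}$.) (ii) You flag but never resolve the $O(i)$ versus $O(i^{-2})$ battle: your candidate is built from the limit $(P_{1},P_{2},P_{3})$ while the identity is at level $i$, so the deviation term is weighted by $M_{1,i}$, which grows like $i$ near $t=T$, and controlling the product needs convergence rates not only for $P_{k,i}\rightarrow P_{k}$ (Proposition \ref{converge-P2i}) but also for $({}_{i}\varphi,{}_{i}v)\rightarrow(\varphi,v)$, which are established nowhere. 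You also never identify the limit of the penalized closed-loop states with the solution of the constrained FBSDE (\ref{state-lq}) under $\bar{u}$, which is what $J(\bar{u})$ is defined through. The paper sidesteps all three issues at once: it constructs the candidate directly from the limit via (\ref{eq-0113-1})--(\ref{eq-xhyz}), verifies it solves (\ref{eq-ham}) (so the terminal constraint holds identically), and computes $J(\bar{u})$ with the limit $\tilde{P}$ on $[0,T-\varepsilon]$, where the deviation term is exactly zero, handling the blow-up of $P_{3}(\cdot)^{-1}$ at $T$ only through the $\varepsilon\rightarrow0$ limit.
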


\begin{proof}
By Theorem \ref{th-conver}, $\left(  P_{1}(\cdot),P_{2}(\cdot),P_{3}%
(\cdot)\right)  $ solves the equations (\ref{eq-p1}), (\ref{eq-p3}),
(\ref{eq-p4}). Then there exists a unique solution $(\left(  \varphi_{1}%
(\cdot)^{\intercal},\varphi_{2}(\cdot)^{\intercal}\right)  ^{\intercal
},\left(  v_{1}(\cdot)^{\intercal},v_{2}(\cdot)^{\intercal}\right)
^{\intercal})$ to the BSDE (\ref{eq-phi1}) and (\ref{eq-phi2}). Set
\[%
\begin{array}
[c]{l}%
N_{1}(t)=A_{1}(t)+B_{1}(t)P_{2}(t)+C_{1}(t)L_{10}(t)+D_{1}(t)L_{6}%
(t),\;N_{2}(t)=-B_{1}(t)P_{3}(t)+C_{1}(t)L_{11}(t)+D_{1}(t)L_{7}(t),\\
N_{3}(t)=B_{1}(t)\varphi_{2}(t)+C_{1}(t)S_{5}(t)+D_{1}(t)S_{3}(t),\;N_{4}%
(t)=A_{2}(t)+B_{2}(t)P_{2}(t)+C_{2}(t)L_{10}(t)+D_{2}(t)L_{6}(t),\\
N_{5}(t)=-B_{2}(t)P_{3}(t)+C_{2}(t)L_{11}(t)+D_{2}(t)L_{7}(t),\;N_{6}%
(t)=-B_{2}(t)\varphi_{2}(t)+C_{2}(t)S_{5}(t)+D_{2}(t)S_{3}(t),\\
N_{7}(t)=B_{1}(t)^{\intercal}P_{1}(t)+B_{2}(t)^{\intercal}L_{8}(t)+B_{4}%
(t)^{\intercal}P_{2}(t),\;N_{8}(t)=B_{3}(t)^{\intercal}+B_{1}(t)^{\intercal
}P_{2}(t)^{\intercal}+B_{2}(t)^{\intercal}L_{9}(t)-B_{4}(t)^{\intercal}%
P_{3}(t),\\
N_{9}(t)=B_{1}(t)^{\intercal}\varphi_{1}(t)+B_{2}(t)^{\intercal}S_{4}%
(t)+B_{4}(t)^{\intercal}\varphi_{2}(t),\;N_{10}(t)=C_{1}(t)^{\intercal}%
P_{1}(t)+C_{2}(t)^{\intercal}L_{8}(t)+C_{4}(t)^{\intercal}L_{10}(t),\\
N_{11}(t)=C_{3}(t)^{\intercal}+C_{1}(t)^{\intercal}P_{2}(t)^{\intercal}%
+C_{2}(t)^{\intercal}L_{9}(t)+C_{4}(t)^{\intercal}L_{11}(t),\;N_{12}%
(t)=C_{1}(t)^{\intercal}\varphi_{1}(t)+C_{2}(t)^{\intercal}S_{4}%
(t)+C_{4}(t)^{\intercal}S_{5}(t).
\end{array}
\]

Consider the following linear SDE for $\left(  X^{\ast}(\cdot),h^{\ast}%
(\cdot)\right)  $:
\begin{equation}
\left\{
\begin{array}
[c]{rl}%
dX^{\ast}(t)= & [N_{1}(t)X^{\ast}(t)+N_{2}(t)h^{\ast}(t)+N_{3}(t)]dt\\
& +[N_{4}(t)X^{\ast}(t)+N_{5}(t)h^{\ast}(t)+N_{6}(t)]dB(t),\\
dh^{\ast}(t)= & [N_{7}(t)X^{\ast}(t)+N_{8}(t)h^{\ast}(t)+N_{9}(t)]dt\\
& +[N_{10}(t)X^{\ast}(t)+N_{11}(t)h^{\ast}(t)+N_{12}(t)]dB(t),\\
X^{\ast}(0)= & x_{0},\text{ }h^{\ast}(0)=\left(  I_{m}+HP_{3}(0)\right)
^{-1}H\left(  P_{2}(0)x_{0}+\varphi_{2}(0)\right)  .
\end{array}
\right.  \label{eq-0113-1}%
\end{equation}
Since (\ref{eq-0113-1}) has bounded coefficients, it has a unique solution
$\left(  X^{\ast}(\cdot)^{\intercal},h^{\ast}(\cdot)^{\intercal}\right)
^{\intercal}\in$ $L_{\mathbb{F}}^{2}(\Omega;C([0,T],\mathbb{R}^{n+m}))$. Set
\begin{equation}%
\begin{array}
[c]{rl}%
\bar{u}(t)= & L_{6}(t)X^{\ast}(t)+L_{7}(t)h^{\ast}(t)+S_{3}(t)
\end{array}
\label{eq-u-indef}%
\end{equation}
which is an admissible control. It can be verified that%
\begin{equation}%
\begin{array}
[c]{cl}%
\bar{X}(t)= & X^{\ast}(t),\text{ }h(t)=h^{\ast}(t),\\
\bar{Y}(t)= & P_{2}(t)\bar{X}(t)-P_{3}(t)h(t)+\varphi_{2}(t),\text{ }\bar
{Z}(t)=L_{10}(t)\bar{X}(t)+L_{11}(t)h(t)+S_{5}(t),\\
m(t)= & P_{1}(t)\bar{X}(t)+P_{2}(t)^{\intercal}h(t)+\varphi_{1}(t),\text{
}n(t)=L_{8}(t)\bar{X}(t)+L_{9}(t)h(t)+S_{4}(t)
\end{array}
\label{eq-xhyz}%
\end{equation}
solves the Hamiltonian system (\ref{eq-ham}). Now we prove that $\bar{u}%
(\cdot)$ is an optimal control in two steps.

\textbf{Step 1:} For $t\in\lbrack0,T)$, set
\[%
\begin{array}
[c]{rl}%
\tilde{P}(t)= & \left(
\begin{array}
[c]{cc}%
P_{1}(t)+P_{2}(t)^{\intercal}P_{3}(t)^{-1}P_{2}(t) & -P_{2}(t)^{\intercal
}P_{3}(t)^{-1}\\
-P_{3}(t)^{-1}P_{2}(t) & P_{3}(t)^{-1}%
\end{array}
\right)  .
\end{array}
\]
For any given $\varepsilon>0$, by Theorem \ref{th-p-ex}, $\tilde{P}(\cdot)$
solves the equation (\ref{eq-rec}) on $[0,T-\varepsilon]$. By the
completion-of-squares technique, we have%
\begin{equation}%
\begin{array}
[c]{rl}
& J(\bar{u}(\cdot))\\
= & \frac{1}{2}[\left(  \left(  x_{0}-\tilde{\varphi}_{1}(0)\right)
^{\intercal},\left(  \bar{Y}(0)-\tilde{\varphi}_{2}(0)\right)  ^{\intercal
}\right)  \tilde{P}(0)\left(  \left(  x_{0}-\tilde{\varphi}_{1}(0)\right)
^{\intercal},\left(  \bar{Y}(0)-\tilde{\varphi}_{2}(0)\right)  ^{\intercal
}\right)  ^{\intercal}+\bar{Y}(0)^{\intercal}H\bar{Y}(0)]\\
& +\frac{1}{2}\mathbb{E}\left[  \bar{X}(T)^{\intercal}G\bar{X}(T)-\binom
{\bar{X}(T-\varepsilon)-\tilde{\varphi}_{1}(T-\varepsilon)}{\bar
{Y}(T-\varepsilon)-\tilde{\varphi}_{2}(T-\varepsilon)}^{\intercal}\tilde
{P}(T-\varepsilon)\binom{\bar{X}(T-\varepsilon)-\tilde{\varphi}_{1}%
(T-\varepsilon)}{\bar{Y}(T-\varepsilon)-\tilde{\varphi}_{2}(T-\varepsilon
)}\right] \\
& +\frac{1}{2}\mathbb{E}\int_{0}^{T-\varepsilon}M_{5}(t)dt\\
& +\frac{1}{2}\mathbb{E\displaystyle}\int_{0}^{T-\varepsilon}\left\{  \left[
\binom{\bar{u}(t)}{\bar{Z}(t)}+M_{1}(t)^{-1}\left(  M_{2}(t)\binom{\bar{X}%
(t)}{\bar{Y}(t)}-M_{3}(t)\tilde{\varphi}(t)-M_{4}(t)\tilde{v}(t)\right)
\right]  ^{\intercal}\right. \\
& \left.  \cdot M_{1}(t)\left[  \binom{\bar{u}(t)}{\bar{Z}(t)}+M_{1}%
(t)^{-1}\left(  M_{2}(t)\binom{\bar{X}(t)}{\bar{Y}(t)}-M_{3}(t)\tilde{\varphi
}(t)-M_{4}(t)\tilde{v}(t)\right)  \right]  \right\}  dt\\
& +\frac{1}{2}\mathbb{E[}\int_{T-\varepsilon}^{T}(\left\langle A_{4}(t)\bar
{X}(t),\bar{X}(t)\right\rangle +\left\langle B_{4}(t)\bar{Y}(t),\bar
{Y}(t)\right\rangle +\left\langle C_{4}(t)\bar{Z}(t),\bar{Z}(t)\right\rangle
+\left\langle D_{4}(t)\bar{u}(t),\bar{u}(t)\right\rangle )dt]\\
= & \left(  I\right)  +\left(  II\right)  +\left(  III\right)  +\left(
IV\right)  +\left(  V\right)  .
\end{array}
\label{eq-j-ub}%
\end{equation}
The part $(I)$ is simplified as follows.
\[%
\begin{array}
[c]{l}%
\left(  \left(  x_{0}-\tilde{\varphi}_{1}(0)\right)  ^{\intercal},\left(
\bar{Y}(0)-\tilde{\varphi}_{2}(0)\right)  ^{\intercal}\right)  \tilde
{P}(0)\left(  \left(  x_{0}-\tilde{\varphi}_{1}(0)\right)  ^{\intercal
},\left(  \bar{Y}(0)-\tilde{\varphi}_{2}(0)\right)  ^{\intercal}\right)
^{\intercal}+\bar{Y}(0)^{\intercal}H\bar{Y}(0)\\
=R_{1}(\bar{Y}(0))^{\intercal}\left(  P_{3}(0)^{-1}+H\right)  R_{1}(\bar
{Y}(0))+R_{2},
\end{array}
\]
where%
\begin{equation}%
\begin{array}
[c]{rl}%
R_{1}(y)= & y-\left(  I_{m}+P_{3}(0)H\right)  ^{-1}\left(  P_{2}%
(0)x_{0}+\varphi_{2}(0)\right)  ,\\
R_{2}= & \left(  x_{0}+P_{1}(0)^{-1}\varphi_{1}(0)\right)  ^{\intercal}%
P_{1}(0)\left(  x_{0}+P_{1}(0)^{-1}\varphi_{1}(0)\right) \\
& +\left[  P_{2}(0)x_{0}+\varphi_{2}(0)\right]  ^{\intercal}\left(
I_{m}+HP_{3}(0)\right)  ^{-1}H\left[  P_{2}(0)x_{0}+\varphi_{2}(0)\right]  .
\end{array}
\label{eq-r12}%
\end{equation}
One can check that $I_{m}-P_{3}(0)\left(  I_{m}+HP_{3}(0)\right)
^{-1}H-\left(  I_{m}+P_{3}(0)H\right)  ^{-1}=0$ which implies $R_{1}(\bar
{Y}(0))=0$.

Then, we prove the part $(II)$ converges to $0$ as $\varepsilon\rightarrow0$.
Noting that $\bar{Y}(\cdot)-P_{2}(\cdot)\bar{X}(\cdot)-\varphi_{2}%
(\cdot)=-P_{3}(\cdot)h(\cdot)$, we have
\[%
\begin{array}
[c]{l}%
\;\bar{X}(T)^{\intercal}G\bar{X}(T)-\binom{\bar{X}(T-\varepsilon
)-\tilde{\varphi}_{1}(T-\varepsilon)}{\bar{Y}(T-\varepsilon)-\tilde{\varphi
}_{2}(T-\varepsilon)}^{\intercal}\tilde{P}(T-\varepsilon)\binom{\bar
{X}(T-\varepsilon)-\tilde{\varphi}_{1}(T-\varepsilon)}{\bar{Y}(T-\varepsilon
)-\tilde{\varphi}_{2}(T-\varepsilon)}\\
=\bar{X}(T)^{\intercal}G\bar{X}(T)-\left(  \bar{X}(T-\varepsilon
)+P_{1}(T-\varepsilon)^{-1}\varphi_{1}(T-\varepsilon)\right)  ^{\intercal
}P_{1}(T-\varepsilon)\\
\ \ \cdot\left(  \bar{X}(T-\varepsilon)+P_{1}(T-\varepsilon)^{-1}\varphi
_{1}(T-\varepsilon)\right)  +h(T-\varepsilon)^{\intercal}P_{3}(T-\varepsilon
)h(T-\varepsilon)\\
\overset{L^{1}}{\rightarrow}0\text{ (as }\varepsilon\rightarrow0\text{).}%
\end{array}
\]
The part $(V)$ converges to $0$ as $\varepsilon\rightarrow0$ due to the
integrability of $\bar{X}(\cdot)$, $\bar{Y}(\cdot)$ and $\bar{Z}(\cdot)$. By
Lemma \ref{le-con-equ}, (\ref{eq-0113-2}) and (\ref{eq-xhyz}), we deduce that
the part $(IV)$ equals to $0$.

Finally, by Lemma \ref{le-gamma} and letting $\varepsilon\rightarrow0$ on both
sides of (\ref{eq-j-ub}), we have
\begin{equation}%
\begin{array}
[c]{rl}%
J(\bar{u}(\cdot))= & \frac{1}{2}R_{2}+\frac{1}{2}\mathbb{E}\int_{0}^{T}%
M_{5}(t)dt.
\end{array}
\label{eq-j-op}%
\end{equation}
\textbf{Step 2.} We first give a lower bound for the cost functional by the
completion-of-squares technique. For an admissible control $u(\cdot)$, let
$\left(  X(\cdot),Y(\cdot),Z(\cdot)\right)  $ be the corresponding state
process. Set $\tilde{X}(t)=\left(  X(t)^{\intercal},Y(t)^{\intercal}\right)
^{\intercal}$. Applying It\^{o}'s formula to
\[
\left(  \tilde{X}(t)-\text{ }_{i}\tilde{\varphi}(t)\right)  ^{\intercal}\text{
}_{i}\tilde{P}(t)\left(  \tilde{X}(t)-\text{ }_{i}\tilde{\varphi}(t)\right)
\]
and taking expectations, we have%
\[%
\begin{array}
[c]{l}%
\mathbb{E}\left[  X(T)^{\intercal}GX(T)\right]  +Y(0)^{\intercal}%
HY(0)=R_{1,i}(Y(0))^{\intercal}\left(  P_{3,i}(0)^{-1}+H\right)
R_{1,i}(Y(0))+R_{2,i}\\
\text{ \ }+\mathbb{E}\left[  \int_{0}^{T}d\left(  \tilde{X}(t)-\text{ }%
_{i}\tilde{\varphi}(t)\right)  ^{\intercal}\text{ }_{i}\tilde{P}(t)\left(
\tilde{X}(t)-\text{ }_{i}\tilde{\varphi}(t)\right)  \right]  ,
\end{array}
\]
where $R_{1,i}(y)$ and $R_{2,i}$ are defined by replacing $P(0)$ with
$_{i}P(0)$ in (\ref{eq-r12}). By the completion-of-squares technique,
\[%
\begin{array}
[c]{l}%
J(u(\cdot))\\
=\frac{1}{2}\left[  R_{1,i}(Y(0))^{\intercal}\left(  P_{3,i}(0)^{-1}+H\right)
R_{1,i}(Y(0))+R_{2,i}\right]  +\frac{1}{2}\mathbb{E}\int_{0}^{T}M_{5,i}(t)dt\\
\text{ \ }+\frac{1}{2}\mathbb{E}\displaystyle\int_{0}^{T}\left\{  \left[
\left(  u(t)^{\intercal},Z(t)^{\intercal}\right)  ^{\intercal}+M_{1,i}%
(t)^{-1}\left(  M_{2,i}(t)\tilde{X}(t)-M_{3,i}(t)\tilde{\varphi}%
(t)-M_{4,i}(t)\tilde{v}(t)\right)  \right]  ^{\intercal}\right. \\
\text{ \ }\left.  M_{1,i}(t)\left[  \left(  u(t)^{\intercal},Z(t)^{\intercal
}\right)  ^{\intercal}+M_{1,i}(t)^{-1}\left(  M_{2,i}(t)\tilde{X}%
(t)-M_{3,i}(t)\tilde{\varphi}(t)-M_{4,i}(t)\tilde{v}(t)\right)  \right]
\right\}  dt.
\end{array}
\]
Note that $M_{1,i}(t)>0$. Letting $i\rightarrow\infty$ and appealing to
Fatou's lemma and Lemma \ref{le-gamma}, we have
\begin{equation}%
\begin{array}
[c]{l}%
J(u(\cdot))\\
\geq J(\bar{u}(\cdot))+\frac{1}{2}R_{1,i}(Y(0))^{\intercal}\left(
P_{3,i}(0)^{-1}+H\right)  R_{1,i}(Y(0))\\
\text{ \ }+\frac{1}{2}\mathbb{E}\displaystyle\int_{0}^{T}\left\{  \left[
\left(  u(t)^{\intercal},Z(t)^{\intercal}\right)  ^{\intercal}+M_{1}%
(t)^{-1}\left(  M_{2}(t)\tilde{X}(t)-M_{3}(t)\tilde{\varphi}(t)-M_{4}%
(t)\tilde{v}(t)\right)  \right]  ^{\intercal}\right. \\
\text{ \ \ }\left.  M_{1}(t)\left[  \left(  u(t)^{\intercal},Z(t)^{\intercal
}\right)  ^{\intercal}+M_{1}(t)^{-1}\left(  M_{2}(t)\tilde{X}(t)-M_{3}%
(t)\tilde{\varphi}(t)-M_{4}(t)\tilde{v}(t)\right)  \right]  \right\}  dt\\
\geq J(\bar{u}(\cdot)).
\end{array}
\label{eq-128-1}%
\end{equation}
Since $\bar{u}(\cdot)$ achieves the lower bound, it is clear that $\bar
{u}(\cdot)$ is optimal.

For any other optimal control $\check{u}(\cdot)$, by (\ref{eq-128-1}) and
$J(\check{u}(\cdot))=J(\bar{u}(\cdot))$, we have%
\[
\left(  \check{u}(t)^{\intercal},\check{Z}(t)^{\intercal}\right)  ^{\intercal
}+M_{1}(t)^{-1}\left(  M_{2}(t)\left(  \check{X}(t)^{\intercal},\check
{Y}(t)^{\intercal}\right)  ^{\intercal}-M_{3}(t)\tilde{\varphi}(t)-M_{4}%
(t)\tilde{v}(t)\right)  =0.
\]
By Lemma \ref{le-con-equ}, we obtain (\ref{eq-0113-2}). This completes the proof.
\end{proof}

In the following we solve a special case of the FBLQ\ problem in which
$D_{4}<0$ and $C_{4}<0$.

\begin{example}
Suppose that all variables are $1$-dimensional. For the FBLQ\ problem
(\ref{state-lq})-(\ref{cost-lq}), suppose that $A_{3}(t)=B_{1}(t)=C_{1}%
(t)=B_{2}(t)=C_{2}(t)=F=\xi=0$ and $D_{1}(t)+D_{2}(t)A_{2}(t)=0$. Then the
solutions to (\ref{eq-p1}), (\ref{eq-p3}) and (\ref{eq-p4}) are $P_{1,i}%
(t)=Ge^{\int_{t}^{T}\left(  2A_{1}(s)+A_{2}(s)^{2}\right)  ds}+\int_{t}%
^{T}A_{4}(s)e^{\int_{t}^{s}\left(  2A_{1}(r)+A_{2}(r)^{2}\right)  dr}ds$,
$P_{2,i}(t)\equiv0$ and $P_{3,i}(\cdot)$ satisfies%
\[
\left\{
\begin{array}
[c]{rl}%
dP_{3,i}(t)= & -\left\{  -B_{4}(t)P_{3,i}(t)^{2}+\left[  2B_{3}(t)+C_{3}%
(t)^{2}(1+P_{3,i}(t)C_{4}(t))^{-1}\right]  P_{3,i}(t)\right. \\
& \left.  +\frac{D_{3}(t)^{2}}{D_{4}(t)+D_{2}(t)^{2}P_{1,i}(t)}\right\}  dt,\\
P_{3,i}(T)= & i^{-1}.
\end{array}
\right.
\]
Suppose that $D_{4}<0$, $C_{4}<0$, $D_{4}(t)+D_{2}(t)^{2}P_{1,i}(t)\geq
\delta>0$, $D_{3}(t)^{2}>0$ and $1+\check{P}_{3,i}(t)C_{4}(t)\geq\delta>0$
where
\[
\left\{
\begin{array}
[c]{rl}%
d\breve{P}_{3,i}(t)= & -\left\{  \left[  2B_{3}(t)+C_{3}(t)^{2}\delta
^{-1}\right]  \breve{P}_{3,i}(t)+\frac{D_{3}(t)^{2}}{D_{4}(t)+D_{2}%
(t)^{2}P_{1,i}(t)}\right\}  dt,\\
\check{P}_{3,i}(T)= & i^{-1}.
\end{array}
\right.
\]
By Comparison theorem we have $P_{3,i}(t)\leq\breve{P}_{3,i}(t)$ which leads
to $1+P_{3,i}(t)C_{4}(t)\geq\delta$. Then, by Theorem \ref{th-conver}
$(P_{1}(\cdot)$, $P_{2}(\cdot)$, $P_{3}(\cdot))$ has a unique solution.
Moreover,
\[
P_{3}(t)=\int_{t}^{T}\frac{D_{3}(s)^{2}}{D_{4}(s)+D_{2}(s)^{2}P_{1}(s)}%
e^{\int_{t}^{s}\left(  2B_{3}(r)+C_{3}(r)^{2}(1+P_{3}(r)C_{4}(r))^{-1}%
-B_{4}(r)P_{3}(r)\right)  dr}ds.
\]
It is obvious that $P_{3}(t)>0$ for $t<T$. Thus, by Theorem \ref{th-infi} the
optimal control is
\begin{equation}%
\begin{array}
[c]{rl}%
\bar{u}(t)= & -\left(  D_{4}(t)+D_{2}(t)^{2}P_{1}(t)\right)  ^{-1}\cdot\left[
\left(  D_{1}(t)+A_{2}(t)D_{2}(t)\right)  P_{1}(t)\bar{X}(t)-D_{3}%
(t)P_{3}(t)^{-1}\bar{Y}(t)\right]  .
\end{array}
\label{optimal control-example}%
\end{equation}

\end{example}

\begin{remark}
Although the forward-backward stochastic control system in the above example
is completely decoupled, in order to obtain the optimal control $\bar{u}%
(\cdot)$ in (\ref{optimal control-example}) we still need to solve a fully
coupled FBSDE.
\end{remark}

\section{Some special cases}

{In this section}, we illustrate our results for the indefinite stochastic LQ,
BLQ and deterministic FBLQ problems.

\subsection{Indefinite stochastic LQ problem}

If $A_{3}(\cdot)=D_{3}(\cdot)=B_{i}(\cdot)=C_{i}(\cdot)=F=H=\xi=0,$ $i=2,3,4$,
then the FBLQ problem (\ref{state-lq})-(\ref{cost-lq}) degenerates to the
following indefinite stochastic LQ problem as in \cite{Chen-LZ}: minimizing
the following cost functional
\[
J(u(\cdot))=\frac{1}{2}\mathbb{E}\left[  \int_{0}^{T}\left(  \left\langle
A_{4}(t)X(t),X(t)\right\rangle +\left\langle D_{4}(t)u(t),u(t)\right\rangle
\right)  dt+\left\langle GX(T),X(T)\right\rangle \right]
\]
subject to%
\[
\left\{
\begin{array}
[c]{rl}%
dX(t)= & [A_{1}(t)X(t)+D_{1}(t)u(t)]dt+[A_{2}(t)X(t)+D_{2}(t)u(t)]dB(t),\\
X(0)= & x_{0}.
\end{array}
\right.
\]
By Theorem \ref{th-infi}, the optimal control is%
\[
\bar{u}(t)=-\left(  D_{4}(t)+D_{2}(t)^{\intercal}P_{1}(t)D_{2}(t)\right)
^{-1}\left(  D_{1}(t)^{\intercal}P_{1}(t)+D_{2}(t)^{\intercal}P_{1}%
(t)A_{2}(t)\right)  \bar{X}(t),
\]
where
\[
\left\{
\begin{array}
[c]{l}%
\dot{P}_{1}(t)+A_{1}(t)^{\intercal}P_{1}(t)+P_{1}(t)A_{1}(t)+A_{4}%
(t)+A_{2}(t)^{\intercal}P_{1}(t)A_{2}(t)\\
-(D_{1}(t)^{\intercal}P_{1}(t)+D_{2}(t)^{\intercal}P_{1}(t)A_{2}%
(t))^{\intercal}\left(  D_{4}(t)+D_{2}(t)^{\intercal}P_{1}(t)D_{2}(t)\right)
^{-1}\\
\ \ \cdot(D_{1}(t)^{\intercal}P_{1}(t)+D_{2}(t)^{\intercal}P_{1}%
(t)A_{2}(t))=0,\\
P_{1}(T)=G,\\
D_{4}(t)+D_{2}(t)^{\intercal}P_{1}(t)D_{2}(t)>0.
\end{array}
\right.
\]
The state feedback representation of the optimal control and the Riccati
equation for $P_{1}(\cdot)$ are just the corresponding ones in Theorem 3.2 in
Chen, Li and Zhou \cite{Chen-LZ}.

\subsection{BLQ problem}

If $A_{i}(\cdot)=B_{i}(\cdot)=C_{i}(\cdot)=D_{i}(\cdot)=F=G=A_{3}(\cdot
)=A_{4}(\cdot)=0,$ $i=1,2$ and $D_{4}(\cdot)>0$, then the problem
(\ref{state-lq})-(\ref{cost-lq}) degenerates to the following BLQ problem as
in \cite{Lim-Z}: minimizing the following cost functional
\[%
\begin{array}
[c]{rl}%
J(u(\cdot))= & \frac{1}{2}\mathbb{E}\left[  \int_{0}^{T}\left(  \left\langle
B_{4}(t)Y(t),Y(t)\right\rangle +\left\langle C_{4}(t)Z(t),Z(t)\right\rangle
+\left\langle D_{4}(t)u(t),u(t)\right\rangle \right)  dt\right] \\
& +\frac{1}{2}\left\langle HY(0),Y(0)\right\rangle ,
\end{array}
\]
subject to%
\[
\left\{
\begin{array}
[c]{rl}%
dY(t)= & -[B_{3}(t)Y(t)+C_{3}(t)Z(t)+D_{3}(t)u(t)]+Z(t)dB(t),\\
Y(T)= & \xi.
\end{array}
\right.
\]
By Theorem \ref{th-opti-cont}, the optimal control is
\[
\bar{u}(t)=-D_{4}(t)^{-1}D_{3}(t)^{\intercal}h(t),
\]
and the following relation holds:%
\[
\bar{Y}(t)=-Q_{4}(t)h(t)-\varphi_{2}(t),
\]
where
\[
\left\{
\begin{array}
[c]{l}%
dQ_{4}(t)\\
=-\left\{  Q_{4}(t)B_{3}(t)^{\intercal}+B_{3}(t)Q_{4}(t)-Q_{4}(t)B_{4}%
(t)Q_{4}(t)-D_{3}(t)D_{4}(t)^{-1}D_{3}(t)^{\intercal}\right. \\
\left.  +C_{3}(t)(I_{m}+Q_{4}(t)C_{4}(t))^{-1}Q_{4}(t)C_{3}(t)^{\intercal
}\right\}  dt,\\
Q_{4}(T)=0,
\end{array}
\right.
\]%
\[
\left\{
\begin{array}
[c]{l}%
d\varphi_{2}(t)\\
=-\left\{  Q_{4}(t)B_{3}(t)\varphi_{2}(t)+B_{3}(t)\varphi_{2}(t)+C_{3}%
(t)(I_{m}+P_{3}(t)C_{4}(t))^{-1}v_{2}(t)\right\}  dt\\
\ \ +v_{2}(t)dB(t),\\
\varphi_{2}(T)=\xi.
\end{array}
\right.
\]
The equation for $Q_{4}(\cdot)$ is just the Riccati equation (3.4) in Lim and
Zhou \cite{Lim-Z}. And the optimal control is consistent with the one in
Theorem 3.3 in \cite{Lim-Z}.

\begin{remark}
It is worth pointing out that our results in this paper can be also applied to
the indefinite BLQ problem.
\end{remark}

\subsection{Deterministic FBLQ problem}

If $C_{1}(\cdot)=A_{2}(\cdot)=B_{2}(\cdot)=C_{2}(\cdot)=D_{2}(\cdot
)=C_{3}(\cdot)=C_{4}(\cdot)=\xi=0$ and $D_{4}(\cdot)>0$, then the problem
(\ref{state-lq})-(\ref{cost-lq}) degenerates to a deterministic FBLQ problem.
For this case, (\ref{eq-p1}), (\ref{eq-p3}), (\ref{eq-p4}) become
\begin{equation}
\left\{
\begin{array}
[c]{l}%
\dot{P}_{1}(t)+P_{1}(t)A_{1}(t)+A_{1}(t)^{\intercal}P_{1}(t)+P_{1}%
(t)B_{1}(t)P_{2}(t)+P_{2}(t)^{\intercal}B_{1}(t)^{\intercal}P_{1}(t)\\
-P_{1}(t)D_{1}(t)D_{4}(t)^{-1}D_{1}(t)^{\intercal}P_{1}(t)+P_{2}%
(t)^{\intercal}B_{4}(t)P_{2}(t)+A_{4}(t)=0,\\
\dot{P}_{2}(t)+P_{2}(t)A_{1}(t)+B_{3}(t)P_{2}(t)-P_{3}(t)B_{4}(t)P_{2}(t)\\
-P_{2}(t)D_{1}(t)D_{4}(t)^{-1}D_{1}(t)^{\intercal}P_{1}(t)+P_{2}%
(t)B_{1}(t)P_{2}(t)-P_{3}(t)B_{1}(t)^{\intercal}P_{1}(t)\\
-D_{3}(t)D_{4}(t)^{-1}D_{1}(t)^{\intercal}P_{1}(t)+A_{3}(t)=0,\\
\dot{P}_{3}(t)+P_{3}(t)B_{3}(t)^{\intercal}+B_{3}(t)P_{3}(t)+P_{2}%
(t)B_{1}(t)P_{3}(t)+P_{3}(t)B_{1}(t)^{\intercal}P_{2}(t)^{\intercal}\\
-P_{3}(t)B_{4}(t)P_{3}(t)+(P_{2}(t)D_{1}(t)+D_{3}(t))D_{4}(t)^{-1}%
(P_{2}(t)D_{1}(t)+D_{3}(t))^{\intercal}=0,\\
P_{1}(T)=G\text{, }P_{2}(T)=F\text{, }P_{3}(T)=0.
\end{array}
\right.  \label{eq-p-de}%
\end{equation}
By Theorems \ref{th-opti-cont} and \ref{th-infi}, we obtain the following proposition.

\begin{proposition}
Suppose that Assumptions \ref{assum-1}, \ref{assum-bound}, \ref{assum-pos} and
\ref{assum-pos-d4} hold. If (\ref{eq-p-de}) has a solution $\left(
P_{1}(\cdot),\text{ }P_{2}(\cdot),\text{ }P_{3}(\cdot)\right)  \in C\left(
\left[  0,T\right]  ;\mathbb{S}^{n}\right.  $ $\times\mathbb{R}^{m\times
n}\left.  \times\mathbb{S}^{m}\right)  $ such that $P_{3}(t)>0$ for $t<T$,
then the above deterministic FBLQ problem has a unique optimal control
\[%
\begin{array}
[c]{l}%
\bar{u}(t)\\
=-D_{4}(t)^{-1}\left\{  \left[  D_{1}(t)^{\intercal}\left(  P_{1}%
(t)+P_{2}(t)^{\intercal}P_{3}(t)^{-1}P_{2}(t)\right)  +D_{3}(t)^{\intercal
}P_{3}(t)^{-1}P_{2}(t)\right]  \bar{X}(t)\right. \\
\ \ \left.  -\left(  D_{1}(t)^{\intercal}P_{2}(t)^{\intercal}+D_{3}%
(t)^{\intercal}\right)  P_{3}(t)^{-1}\bar{Y}(t)\right\}  .
\end{array}
\]

\end{proposition}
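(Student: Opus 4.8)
The plan is to recognize this deterministic FBLQ problem as the special case of the positive‑definite theory of Subsection \ref{sub-def} and to read off the control from Corollary \ref{cor-1} (equivalently, from the specialization of Theorem \ref{th-infi}). Since $D_{4}(\cdot)>0$ and $C_{4}(\cdot)=0\geq0$, Remark \ref{re-d4} applies and tells us that the two decouplings coincide through $Q_{1}=P_{1}$, $Q_{2}=P_{2}^{\intercal}$, $Q_{3}=P_{2}$, $Q_{4}=P_{3}$, i.e.
\[
Q(t)=\begin{pmatrix} P_{1}(t) & P_{2}(t)^{\intercal}\\ P_{2}(t) & -P_{3}(t)\end{pmatrix}.
\]
First I would substitute the vanishing coefficients $C_{1}=A_{2}=B_{2}=C_{2}=D_{2}=C_{3}=C_{4}=0$ into the definitions of $\tilde{A}_{2},\tilde{B}_{2},\tilde{C}_{2}$ and check that all three vanish. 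Consequently $(I_{n+m}-Q\tilde{C}_{2})^{-1}=I_{n+m}\in L^{\infty}$, the gain $K(\cdot)\equiv0$, and the auxiliary blocks degenerate to $J_{i}=0$, $I_{1}=I_{n}$, $I_{4}=I_{m}$, $I_{2}=I_{3}=0$. With these reductions the matrix ODE (\ref{eq-p-til}) for $Q$ collapses exactly to the block system (\ref{eq-p-de}); verifying this block‑by‑block identification is the principal bookkeeping, but it is routine once the auxiliary matrices are seen to vanish. In particular, the assumed solution of (\ref{eq-p-de}) furnishes a solution $Q(\cdot)\in C([0,T];\mathbb{R}^{(n+m)\times(n+m)})$ of (\ref{eq-p-til}).

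Next I would confirm the hypotheses of Theorem \ref{th-opti-cont}. The regularity $\left(P_{1},P_{2},P_{3}\right)\in C([0,T];\mathbb{S}^{n}\times\mathbb{R}^{m\times n}\times\mathbb{S}^{m})$ gives $Q(\cdot)\in C([0,T];\mathbb{R}^{(n+m)\times(n+m)})$, and $(I_{n+m}-Q\tilde{C}_{2})^{-1}=I_{n+m}$ is trivially in $L^{\infty}$. For the invertibility of $I_{m}+HQ_{4}(0)=I_{m}+HP_{3}(0)$, I would use that $H\in\mathbb{S}_{+}^{m}$ and $P_{3}(0)>0$: conjugating by $P_{3}(0)^{1/2}$ shows $HP_{3}(0)$ is similar to the nonnegative definite matrix $P_{3}(0)^{1/2}HP_{3}(0)^{1/2}$, so its eigenvalues are nonnegative and $I_{m}+HP_{3}(0)$ has eigenvalues $\geq1$, hence is invertible. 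Theorem \ref{th-opti-cont} then yields existence and uniqueness of the optimal control.

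Finally I would specialize the feedback law. Since $\xi=0$ and the remaining data enter homogeneously, the linear BSDEs (\ref{eq-phi1}), (\ref{eq-phi2}) have zero terminal data and no free forcing term, so their unique solution is $\varphi_{1}\equiv\varphi_{2}\equiv0$ and $v_{1}\equiv v_{2}\equiv0$; this removes every inhomogeneous term from (\ref{eq-op-con}). Because $P_{3}=Q_{4}>0$ on $[0,T)$, $Q_{4}$ is invertible there, so Corollary \ref{cor-1}(i) gives $h(t)=P_{3}(t)^{-1}P_{2}(t)\bar{X}(t)-P_{3}(t)^{-1}\bar{Y}(t)$ and the state‑feedback form of $\bar{u}$. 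Substituting $Q_{1}=P_{1}$, $Q_{2}=P_{2}^{\intercal}$, $D_{2}=0$, $K_{i}=0$ and collecting the $\bar{X}$ and $\bar{Y}$ coefficients reproduces the claimed expression. Equivalently, evaluating (\ref{def-L1}) with $D_{2}=0$ gives $L_{5}=D_{4}$, $L_{6}=-D_{4}^{-1}D_{1}^{\intercal}P_{1}$ and $L_{7}=-D_{4}^{-1}(D_{1}^{\intercal}P_{2}^{\intercal}+D_{3}^{\intercal})$, and formula (\ref{eq-0113-2}) of Theorem \ref{th-infi} then collapses (with $\varphi_{2}=S_{3}=0$) to the same control.

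The main obstacle I anticipate is purely the algebraic verification that the fully general equations (\ref{eq-p1}), (\ref{eq-p3}), (\ref{eq-p4}) — built from the intricate matrices $L_{1},\dots,L_{11}$ and $S_{1},\dots,S_{5}$ — degenerate cleanly to (\ref{eq-p-de}). Once one checks the key collapses $L_{1}=I_{m}$, $L_{2}=I_{n}$, $L_{5}=D_{4}$, $K\equiv0$ and $J\equiv0$ under the vanishing‑coefficient hypotheses, every remaining identity is a direct substitution and the proposition follows from Theorems \ref{th-opti-cont} and \ref{th-infi}.
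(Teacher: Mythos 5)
Your proposal is correct and follows essentially the same route as the paper, which derives this proposition by specializing Theorems \ref{th-opti-cont} and \ref{th-infi} to the vanishing coefficients; you simply spell out the routine verifications the paper leaves implicit (that $\tilde{A}_2=\tilde{B}_2=\tilde{C}_2=\tilde{C}_1=\tilde{C}_3=0$ collapses (\ref{eq-p-til}) to (\ref{eq-p-de}), that $I_m+HP_3(0)$ is invertible, that $\varphi\equiv0$, $v\equiv0$ when $\xi=0$, and the resulting feedback form via Corollary \ref{cor-1} and (\ref{eq-0113-2})).
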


For $1$-dimensional case ($n=m=1$), if $B_{4}(\cdot)=0$, $B_{1}(\cdot)<0$ and
$F$ is large enough such that $P_{2,i}(\cdot)$ is non-negative and bounded,
then (\ref{eq-p-de}) has a unique solution. The reason is that when
$B_{4}(\cdot)=0$, $B_{1}(\cdot)<0$ and $P_{2,i}(\cdot)\geq0$, we have
\[
P_{1,i}(t)\leq Ge^{\int_{t}^{T}2A_{1}(s)ds}+\int_{t}^{T}A_{4}(s)e^{\int%
_{t}^{s}2A_{1}(r)dr}ds.
\]
Thus, $P_{1,i}(\cdot)$ is bounded and we obtain the desired result due to
Theorem \ref{th-conver}.

\section{Appendix}

This appendix is devoted to proofs of Theorem \ref{th-p-ex}, Lemma
\ref{le-con-equ} and Lemma \ref{le-gamma}. Before giving the proofs, let's
give some notations.

Set
\begin{equation}
\left(  \tilde{R}(t)+\tilde{D}(t)^{\intercal}{}_{i}\tilde{P}(t)\tilde
{D}(t)\right)  ^{-1}=\left(
\begin{array}
[c]{cc}%
a_{11}(t) & a_{21}(t)^{\intercal}\\
a_{21}(t) & a_{22}(t)
\end{array}
\right)  , \label{app-def1}%
\end{equation}%
\begin{equation}
\left(  \tilde{B}(t)^{\intercal}{}_{i}\tilde{P}(t)+\tilde{D}(t)^{\intercal}%
{}_{i}\tilde{P}(t)\tilde{C}(t)\right)  ^{\intercal}=\left(
\begin{array}
[c]{cc}%
a(t) & b(t)\\
c(t) & d(t)
\end{array}
\right)  \label{app-def2}%
\end{equation}
where
\begin{equation}%
\begin{array}
[c]{rl}%
a(t)= & \tilde{P}_{1,i}(t)D_{1}(t)-\tilde{P}_{2,i}(t)^{\intercal}%
D_{3}(t)+A_{2}(t)^{\intercal}\tilde{P}_{1,i}(t)D_{2}(t),\\
b(t)= & \tilde{P}_{1,i}(t)C_{1}(t)-\tilde{P}_{2,i}(t)^{\intercal}%
C_{3}(t)+A_{2}(t)^{\intercal}\tilde{P}_{1,i}(t)C_{2}(t)+A_{2}(t)^{\intercal
}\tilde{P}_{2,i}(t)^{\intercal},\\
c(t)= & \tilde{P}_{2,i}(t)D_{1}(t)-\tilde{P}_{3,i}(t)D_{3}(t)+B_{2}%
(t)^{\intercal}\tilde{P}_{1,i}(t)D_{2}(t),\\
d(t)= & \tilde{P}_{2,i}(t)C_{1}(t)-\tilde{P}_{3,i}(t)C_{3}(t)+B_{2}%
(t)^{\intercal}\tilde{P}_{1,i}(t)C_{2}(t)+B_{2}(t)^{\intercal}\tilde{P}%
_{2,i}(t)^{\intercal},\\
a_{11}(t)= & \left\{  D_{4}(t)+D_{2}(t)^{\intercal}\tilde{P}_{1,i}%
(t)D_{2}(t)\right. \\
& \left.  -D_{2}(t)^{\intercal}\left(  \tilde{P}_{1,i}(t)C_{2}(t)+\tilde
{P}_{2,i}(t)^{\intercal}\right)  D(t)^{-1}\left(  C_{2}(t)^{\intercal}%
\tilde{P}_{1,i}(t)+\tilde{P}_{2,i}(t)\right)  D_{2}(t)\right\}  ^{-1},\\
a_{21}(t)= & -D(t)^{-1}\left(  C_{2}(t)^{\intercal}\tilde{P}_{1,i}%
(t)+\tilde{P}_{2,i}(t)\right)  D_{2}(t)a_{11}(t),\\
a_{22}(t)= & D(t)^{-1}+D(t)^{-1}\left(  C_{2}(t)^{\intercal}\tilde{P}%
_{1,i}(t)+\tilde{P}_{2,i}(t)\right)  D_{2}(t)a_{11}(t)D_{2}(t)^{\intercal
}\left(  \tilde{P}_{1,i}(t)C_{2}(t)+\tilde{P}_{2,i}(t)^{\intercal}\right)
D(t)^{-1},\\
D(t)= & C_{4}(t)+\left(  C_{2}(t)^{\intercal}\tilde{P}_{1,i}(t)+\tilde
{P}_{2,i}(t)\right)  C_{2}(t)+C_{2}(t)^{\intercal}\tilde{P}_{2,i}%
(t)^{\intercal}+\tilde{P}_{3,i}(t).
\end{array}
\label{def-nota-1}%
\end{equation}
And set%
\begin{equation}%
\begin{array}
[c]{c}%
\tilde{B}(t)^{\intercal}{}_{i}\tilde{P}(t)=\left(
\begin{array}
[c]{cc}%
\tilde{a}(t) & \tilde{b}(t)\\
\tilde{c}(t) & \tilde{d}(t)
\end{array}
\right)  ,\\
\tilde{D}(t)^{\intercal}{}_{i}\tilde{P}(t)=\left(
\begin{array}
[c]{cc}%
\bar{a}(t) & \bar{b}(t)\\
\bar{c}(t) & \bar{d}(t)
\end{array}
\right)  ,
\end{array}
\label{def-nota-2}%
\end{equation}
where
\begin{equation}%
\begin{array}
[c]{rlll}%
\tilde{a}(t)= & D_{1}(t)^{\intercal}\tilde{P}_{1,i}(t)-D_{3}(t)^{\intercal
}\tilde{P}_{2,i}(t), & \bar{a}(t)= & D_{2}(t)^{\intercal}\tilde{P}_{1,i}(t),\\
\tilde{b}(t)= & D_{1}(t)^{\intercal}\tilde{P}_{2,i}(t)^{\intercal}%
-D_{3}(t)^{\intercal}\tilde{P}_{3,i}(t), & \bar{b}(t)= & D_{2}(t)^{\intercal
}\tilde{P}_{2,i}(t)^{\intercal},\\
\tilde{c}(t)= & C_{1}(t)^{\intercal}\tilde{P}_{1,i}(t)-C_{3}(t)^{\intercal
}\tilde{P}_{2,i}(t), & \bar{c}(t)= & C_{2}(t)^{\intercal}\tilde{P}%
_{1,i}(t)+\tilde{P}_{2,i}(t),\\
\tilde{d}(t)= & C_{1}(t)^{\intercal}\tilde{P}_{2,i}(t)^{\intercal}%
-C_{3}(t)^{\intercal}\tilde{P}_{3,i}(t)^{\intercal}, & \bar{d}(t)= &
C_{2}(t)^{\intercal}\tilde{P}_{2,i}(t)^{\intercal}.
\end{array}
\label{def-nota-app}%
\end{equation}

\subsection{Proof of Theorem \ref{th-p-ex}}

Before we prove Theorem \ref{th-p-ex}, we list the following relations which
can be verified directly:%
\begin{equation}%
\begin{array}
[c]{rl}%
P_{3,i}(t)C_{2}(t)^{\intercal}\left(  C_{2}(t)^{\intercal}\tilde{P}%
_{1,i}(t)+\tilde{P}_{2,i}(t)\right)  ^{\intercal}= & P_{3,i}(t)D(t)-\left(
I_{m}-P_{2,i}(t)C_{2}(t)+P_{3,i}(t)C_{4}(t)\right)  ,
\end{array}
\label{app-1}%
\end{equation}%
\begin{equation}%
\begin{array}
[c]{rl}%
\left(  C_{2}(t)^{\intercal}\tilde{P}_{1,i}(t)+\tilde{P}_{2,i}(t)\right)
^{\intercal}D(t)^{-1}= & L_{2,i}(t)^{-1}\left(  C_{2}(t)^{\intercal}\tilde
{P}_{1,i}(t)+\tilde{P}_{2,i}(t)\right)  ^{\intercal}\\
& \cdot\left(  I_{m}-P_{2,i}(t)C_{2}(t)+P_{3,i}(t)C_{4}\right)  ^{-1}%
P_{3,i}(t),
\end{array}
\label{app-2}%
\end{equation}%
\begin{equation}%
\begin{array}
[c]{rl}%
P_{3,i}(t)d(t)= & -(P_{2,i}(t)C_{1}(t)+C_{3}(t))+P_{3,i}(t)B_{2}%
(t)^{\intercal}\left(  C_{2}(t)^{\intercal}\tilde{P}_{1,i}(t)+\tilde{P}%
_{2,i}(t)\right)  ^{\intercal},
\end{array}
\label{app-3}%
\end{equation}%
\begin{equation}%
\begin{array}
[c]{rl}%
L_{2,i}(t)^{-1}S_{1,i}(t)= & \tilde{P}_{1,i}(t)-\left(  C_{2}(t)^{\intercal
}\tilde{P}_{1,i}(t)+\tilde{P}_{2,i}(t)\right)  ^{\intercal}D(t)^{-1}\left(
C_{2}(t)^{\intercal}\tilde{P}_{1,i}(t)+\tilde{P}_{2,i}(t)\right)  ,
\end{array}
\label{app-4}%
\end{equation}%
\begin{equation}%
\begin{array}
[c]{rl}%
a_{11}(t)= & \left(  D_{4}(t)+D_{2}(t)^{\intercal}L_{2,i}(t)^{-1}%
S_{1,i}(t)D_{2}(t)\right)  ^{-1},
\end{array}
\label{app-5}%
\end{equation}%
\begin{equation}%
\begin{array}
[c]{rl}%
L_{2,i}(t)^{-1}L_{3,i}(t)= & \left[  \tilde{P}_{1,i}(t)-\left(  C_{2}%
(t)^{\intercal}\tilde{P}_{1,i}(t)+\tilde{P}_{2,i}(t)\right)  ^{\intercal
}D(t)^{-1}\left(  C_{2}(t)^{\intercal}\tilde{P}_{1,i}(t)+\tilde{P}%
_{2,i}(t)\right)  \right] \\
& \cdot\left(  A_{2}(t)+B_{2}(t)P_{2,i}(t)\right)  -\left(  C_{2}%
(t)^{\intercal}\tilde{P}_{1,i}(t)+\tilde{P}_{2,i}(t)\right)  ^{\intercal
}D(t)^{-1}C_{1}(t)^{\intercal}P_{1,i}(t),
\end{array}
\label{app-6}%
\end{equation}%
\begin{equation}%
\begin{array}
[c]{rl}%
L_{2,i}(t)^{-1}L_{4,i}(t)= & \left(  C_{2}(t)^{\intercal}\tilde{P}%
_{1,i}(t)+\tilde{P}_{2,i}(t)\right)  ^{\intercal}D(t)^{-1}d(t)^{\intercal
}\tilde{P}_{3,i}(t)^{-1}-\tilde{P}_{1,i}(t)B_{2}(t)\tilde{P}_{3,i}(t)^{-1},
\end{array}
\label{app-7}%
\end{equation}%
\begin{equation}%
\begin{array}
[c]{rl}%
b(t)^{\intercal}+d(t)^{\intercal}P_{2,i}(t)= & \left(  C_{2}(t)^{\intercal
}\tilde{P}_{1,i}(t)+\tilde{P}_{2,i}(t)\right)  \left(  A_{2}(t)+B_{2}%
(t)P_{2,i}(t)\right)  +C_{1}(t)^{\intercal}P_{1,i}(t),
\end{array}
\label{app-8}%
\end{equation}%
\begin{equation}%
\begin{array}
[c]{rl}%
a(t)^{\intercal}+c(t)^{\intercal}P_{2,i}(t)= & D_{1}(t)^{\intercal}%
P_{1,i}(t)+D_{2}(t)^{\intercal}\tilde{P}_{1,i}(t)\left(  A_{2}(t)+B_{2}%
(t)P_{2,i}(t)\right)  .
\end{array}
\label{app-9}%
\end{equation}

\textbf{Proof of Theorem \ref{th-p-ex}:} The proof is divided into five steps.
The first three steps we verify the relationship between $_{i}P(\cdot)$ and
$_{i}\tilde{P}(\cdot)$, that is,%
\[%
\begin{array}
[c]{rl}%
P_{1,i}(t)= & \tilde{P}_{1,i}(t)-\tilde{P}_{2,i}(t)^{\intercal}\tilde{P}%
_{3,i}(t)^{-1}\tilde{P}_{2,i}(t),\\
P_{2,i}(t)= & -\tilde{P}_{3,i}(t)^{-1}\tilde{P}_{2,i}(t),\\
P_{3,i}(t)= & \tilde{P}_{3,i}(t)^{-1}.
\end{array}
\]
or equivalently
\begin{equation}%
\begin{array}
[c]{rl}%
\tilde{P}_{1,i}(t)= & P_{1,i}(t)+P_{2,i}(t)^{\intercal}P_{3,i}(t)^{-1}%
P_{2,i}(t),\\
\tilde{P}_{2,i}(t)= & -P_{3,i}(t)^{-1}P_{2,i}(t),\\
\tilde{P}_{3,i}(t)= & P_{3,i}(t)^{-1}.
\end{array}
\label{re-appen}%
\end{equation}

Recall that the equations satisfied by $\tilde{P}_{i}(\cdot)$ are
\begin{equation}
\left\{
\begin{array}
[c]{l}%
\dot{\tilde{P}}_{1,i}(t)+\tilde{P}_{1,i}(t)A_{1}-\tilde{P}_{2,i}%
(t)^{\intercal}A_{3}(t)+A_{1}(t)^{\intercal}\tilde{P}_{1,i}(t)-A_{3}%
(t)^{\intercal}\tilde{P}_{2,i}(t)^{\intercal}+A_{2}(t)^{\intercal}\tilde
{P}_{1,i}(t)A_{2}(t)+A_{4}(t)\\
-\left[  a(t)a_{11}(t)a(t)^{\intercal}+b(t)a_{21}(t)a(t)^{\intercal
}+a(t)a_{21}(t)^{\intercal}b(t)^{\intercal}+b(t)a_{22}(t)b(t)^{\intercal
}\right]  =0,\\
\tilde{P}_{1,i}(T)=G,
\end{array}
\right.  \label{eq-p1-til}%
\end{equation}%
\begin{equation}
\left\{
\begin{array}
[c]{l}%
\dot{\tilde{P}}_{2,i}(t)+\tilde{P}_{2,i}(t)A_{1}-\tilde{P}_{3,i}%
(t)A_{3}(t)+B_{1}(t)^{\intercal}\tilde{P}_{1,i}(t)-B_{3}(t)^{\intercal}%
\tilde{P}_{2,i}(t)+B_{2}(t)^{\intercal}\tilde{P}_{1,i}(t)A_{2}(t)\\
-\left[  c(t)a_{11}(t)a(t)^{\intercal}+d(t)a_{21}(t)a(t)^{\intercal
}+c(t)a_{21}(t)^{\intercal}b(t)^{\intercal}+d(t)a_{22}(t)b(t)^{\intercal
}\right]  =0,\\
\tilde{P}_{2,i}(T)=F,
\end{array}
\right.  \label{eq-p2-til}%
\end{equation}%
\begin{equation}
\left\{
\begin{array}
[c]{l}%
\dot{\tilde{P}}_{3,i}(t)+\tilde{P}_{2,i}(t)B_{1}-\tilde{P}_{3,i}%
(t)B_{3}(t)+B_{1}(t)^{\intercal}\tilde{P}_{2,i}(t)^{\intercal}-B_{3}%
(t)^{\intercal}\tilde{P}_{3,i}(t)+B_{2}(t)^{\intercal}\tilde{P}_{1,i}%
(t)B_{2}(t)+B_{4}(t)\\
-\left[  c(t)a_{11}(t)c(t)^{\intercal}+d(t)a_{21}(t)c(t)^{\intercal
}+c(t)a_{21}(t)^{\intercal}d(t)^{\intercal}+d(t)a_{22}(t)d(t)^{\intercal
}\right]  =0,\\
\tilde{P}_{3,i}(T)=0.
\end{array}
\right.  \label{eq-p3-til}%
\end{equation}

\textbf{Step 1:} In this step we verify $P_{3,i}(\cdot)=\tilde{P}_{3,i}%
(\cdot)^{-1}$.

$\tilde{P}_{3,i}(\cdot)^{-1}$ satisfies the following equation:
\begin{equation}%
\begin{array}
[c]{l}%
\left(  \tilde{P}_{3,i}(t)^{-1}\right)  ^{^{\prime}}-\tilde{P}_{3,i}%
(t)^{-1}\{\tilde{P}_{2,i}(t)B_{1}-\tilde{P}_{3,i}(t)B_{3}(t)+B_{1}%
(t)^{\intercal}\tilde{P}_{2,i}(t)^{\intercal}\\
-B_{3}(t)^{\intercal}\tilde{P}_{3,i}(t)+B_{2}(t)^{\intercal}\tilde{P}%
_{1,i}(t)B_{2}(t)+B_{4}(t)\\
-\left[  c(t)a_{11}(t)c(t)^{\intercal}+d(t)a_{21}(t)c(t)^{\intercal
}+c(t)a_{21}(t)^{\intercal}d(t)^{\intercal}+d(t)a_{22}(t)d(t)^{\intercal
}\right]  \}\tilde{P}_{3,i}(t)^{-1}=0.
\end{array}
\label{eq-p3-til1}%
\end{equation}
Note that $_{i}P(\cdot)$ and $P(\cdot)$ are governed by the same equations
except the terminal conditions. Putting the relation (\ref{re-appen}) into
(\ref{eq-p3-til1}) and comparing with (\ref{eq-p4}), we need to verify
\begin{equation}%
\begin{array}
[c]{l}%
-P_{3,i}(t)B_{2}(t)^{\intercal}\left(  P_{1,i}(t)+P_{2,i}(t)^{\intercal
}P_{3,i}(t)^{-1}P_{2,i}(t)\right)  B_{2}(t)P_{3,i}(t)\\
+P_{3,i}(t)\left[  c(t)a_{11}(t)c(t)^{\intercal}+d(t)a_{21}(t)c(t)^{\intercal
}+c(t)a_{21}(t)^{\intercal}d(t)^{\intercal}+d(t)a_{22}(t)d(t)^{\intercal
}\right]  P_{3,i}(t)\\
=-\left[  \left(  P_{2,i}(t)C_{1}(t)+C_{3}(t)\right)  L_{11,i}(t)-P_{3,i}%
(t)B_{2}(t)^{\intercal}L_{9,i}(t)+\left(  P_{2,i}(t)D_{1}(t)+D_{3}(t)\right)
L_{7,i}(t)\right]  .
\end{array}
\label{appendix-compare-p3}%
\end{equation}
We compare the coefficients of $a_{11}(\cdot)$ and the remainder terms on both
sides of the above equation. The coefficient of $a_{11}(\cdot)$ on the left
hand side (LHS) is%
\[
P_{3,i}(t)c(t)-P_{3,i}(t)d(t)D(t)^{-1}\left(  C_{2}(t)^{\intercal}\tilde
{P}_{1,i}(t)+\tilde{P}_{2,i}(t)\right)  D_{2}(t),
\]
and the one on the right hand side (RHS) is%
\[%
\begin{array}
[c]{l}%
\lbrack\left(  P_{2,i}(t)C_{1}(t)+C_{3}(t)\right)  D(t)^{-1}(C_{2}%
(t)^{\intercal}\tilde{P}_{1,i}(t)+\tilde{P}_{2,i}(t))D_{2}(t)\\
+P_{2,i}(t)D_{1}(t)+D_{3}(t)-P_{3,i}(t)B_{2}(t)^{\intercal}L_{2,i}%
(t)^{-1}S_{1,i}(t)D_{2}(t)].
\end{array}
\]
The remainder terms on the LHS is
\[
-P_{3,i}(t)B_{2}(t)^{\intercal}\left(  P_{1,i}(t)+P_{2,i}(t)^{\intercal
}P_{3,i}(t)^{-1}P_{2,i}(t)\right)  B_{2}(t)P_{3,i}(t)+P_{3,i}(t)d(t)D(t)^{-1}%
d(t)P_{3,i}(t),
\]
and the ones on the RHS is%
\[%
\begin{array}
[c]{l}%
\left[  \left(  P_{2,i}(t)C_{1}(t)+C_{3}(t)\right)  \left(  I_{m}%
-P_{2,i}(t)C_{2}(t)+P_{3,i}(t)C_{4}(t)\right)  ^{-1}P_{3,i}(t)C_{2}%
(t)^{\intercal}+P_{3,i}(t)B_{2}(t)^{\intercal}\right]  L_{2,i}(t)^{-1}%
L_{4,i}(t)\\
+\left(  P_{2,i}(t)C_{1}(t)+C_{3}(t)\right)  \left(  I_{m}-P_{2,i}%
(t)C_{2}(t)+P_{3,i}(t)C_{4}(t)\right)  ^{-1}\\
\cdot\left(  P_{2,i}(t)B_{2}(t)P_{3,i}(t)+P_{3,i}(t)C_{3}(t)^{\intercal
}+P_{3,i}(t)C_{1}(t)^{\intercal}P_{2,i}(t)^{\intercal}\right)  .
\end{array}
\]
By relations (\ref{app-3}), (\ref{app-4}), (\ref{app-7}) and (\ref{re-appen}),
we obtain that (\ref{appendix-compare-p3}) holds.

\textbf{Step 2}: In this step we verify $P_{2,i}(t)=-\tilde{P}_{3,i}%
(t)^{-1}\tilde{P}_{2,i}(t)$.

$-\tilde{P}_{3,i}(\cdot)^{-1}\tilde{P}_{2,i}(\cdot)$ satisfies the following
equation:
\begin{equation}%
\begin{array}
[c]{l}%
\left(  -\tilde{P}_{3,i}(t)^{-1}\tilde{P}_{2,i}(\cdot)\right)  ^{^{\prime}%
}-P_{3,i}(t)\{\tilde{P}_{2,i}(t)A_{1}-\tilde{P}_{3,i}(t)A_{3}(t)+B_{1}%
(t)^{\intercal}\tilde{P}_{1,i}(t)\\
-B_{3}(t)^{\intercal}\tilde{P}_{2,i}(t)+B_{2}(t)^{\intercal}\tilde{P}%
_{1,i}(t)A_{2}(t)\\
-\left[  c(t)a_{11}(t)a(t)^{\intercal}+d(t)a_{21}(t)a(t)^{\intercal
}+c(t)a_{21}(t)^{\intercal}b(t)^{\intercal}+d(t)a_{22}(t)b(t)^{\intercal
}\right]  \}\\
-\{P_{2,i}(t)B_{1}(t)P_{3,i}(t)-P_{2,i}(t)C_{1}(t)L_{11,i}(t)-P_{2,i}%
(t)D_{1}(t)L_{7,i}(t)\\
+P_{3,i}(t)B_{3}(t)^{\intercal}+P_{3,i}(t)B_{1}(t)^{\intercal}P_{2,i}%
(t)^{\intercal}+P_{3,i}(t)B_{2}(t)^{\intercal}L_{9,i}(t)-P_{3,i}%
(t)B_{4}(t)P_{3,i}(t)\\
+B_{3}(t)P_{3,i}(t)-C_{3}(t)L_{11,i}(t)-D_{3}(t)L_{7,i}(t)\}\tilde{P}%
_{2,i}(t)=0.
\end{array}
\label{eq-p32-til}%
\end{equation}
Putting the relation (\ref{re-appen}) into (\ref{eq-p32-til}) and comparing
with (\ref{eq-p3}), we only need to verify%
\begin{equation}%
\begin{array}
[c]{l}%
-P_{3,i}(t)\{B_{1}(t)^{\intercal}P_{2,i}(t)^{\intercal}P_{3,i}(t)^{-1}%
P_{2,i}(t)-B_{3}(t)^{\intercal}\tilde{P}_{2,i}(t)+B_{2}(t)^{\intercal}%
\tilde{P}_{1,i}(t)A_{2}(t)\\
-\left[  c(t)a_{11}(t)a(t)^{\intercal}+d(t)a_{21}(t)a(t)^{\intercal
}+c(t)a_{21}(t)^{\intercal}b(t)^{\intercal}+d(t)a_{22}(t)b(t)^{\intercal
}\right]  \}\\
-\{-P_{2,i}(t)C_{1}(t)L_{11,i}(t)-P_{2,i}(t)D_{1}(t)L_{7,i}(t)+P_{3,i}%
(t)B_{3}(t)^{\intercal}+P_{3,i}(t)B_{1}(t)^{\intercal}P_{2,i}(t)^{\intercal}\\
+P_{3,i}(t)B_{2}(t)^{\intercal}L_{9,i}(t)-C_{3}(t)L_{11,i}(t)-D_{3}%
(t)L_{7,i}(t)\}\tilde{P}_{2,i}(t)\\
=\left[  \left(  P_{2,i}(t)C_{1}(t)+C_{3}(t)\right)  L_{10,i}(t)-P_{3,i}%
(t)B_{2}(t)^{\intercal}L_{8,i}(t)+\left(  P_{2,i}(t)D_{1}(t)+D_{3}(t)\right)
L_{6,i}(t)\right]  .
\end{array}
\label{appendix-compare-p2}%
\end{equation}
The coefficient of $a_{11}(\cdot)$ on the LHS is
\[%
\begin{array}
[c]{l}%
P_{3,i}(t)[c(t)-d(t)D(t)^{-1}(C_{2}(t)^{\intercal}\tilde{P}_{1,i}(t)+\tilde
{P}_{2,i}(t))D_{2}(t)]a_{11}(t)\\
\lbrack a(t)^{\intercal}-D_{2}(t)^{\intercal}\left(  \tilde{P}_{1,i}%
(t)C_{2}(t)+\tilde{P}_{2,i}(t)^{\intercal}\right)  D(t)^{-1}b(t)^{\intercal
}]\\
+[\left(  P_{2,i}(t)C_{1}(t)+C_{3}(t)\right)  D(t)^{-1}(C_{2}(t)^{\intercal
}\tilde{P}_{1,i}(t)+\tilde{P}_{2,i}(t))D_{2}(t)-P_{2,i}(t)D_{1}(t)-D_{3}(t)\\
+P_{3,i}(t)B_{2}(t)^{\intercal}L_{2,i}(t)^{-1}S_{1,i}(t)D_{2}(t)]a_{11}%
(t)[c(t)^{\intercal}-D_{2}(t)^{\intercal}\left(  \tilde{P}_{1,i}%
(t)C_{2}(t)+\tilde{P}_{2,i}(t)^{\intercal}\right)  D(t)^{-1}d(t)^{\intercal
}]P_{2,i}(t),
\end{array}
\]
and the one on the RHS is
\[%
\begin{array}
[c]{l}%
\lbrack\left(  P_{2,i}(t)C_{1}(t)+C_{3}(t)\right)  D(t)^{-1}(C_{2}%
(t)^{\intercal}\tilde{P}_{1,i}(t)+\tilde{P}_{2,i}(t))D_{2}(t)-P_{2,i}%
(t)D_{1}(t)-D_{3}(t)\\
+P_{3,i}(t)B_{2}(t)^{\intercal}L_{2,i}(t)^{-1}S_{1,i}(t)D_{2}(t)]a_{11}%
(t)[a(t)^{\intercal}+c(t)^{\intercal}P_{2,i}(t)\\
-D_{2}(t)^{\intercal}\left(  \tilde{P}_{1,i}(t)C_{2}(t)+\tilde{P}%
_{2,i}(t)^{\intercal}\right)  D(t)^{-1}(b(t)^{\intercal}+d(t)^{\intercal
}P_{2,i}(t)].
\end{array}
\]
By calculation, we need to prove
\[%
\begin{array}
[c]{l}%
P_{3,i}(t)[c(t)-d(t)D(t)^{-1}(C_{2}(t)^{\intercal}\tilde{P}_{1,i}(t)+\tilde
{P}_{2,i}(t))D_{2}(t)]\\
=[\left(  P_{2,i}(t)C_{1}(t)+C_{3}(t)\right)  D(t)^{-1}(C_{2}(t)^{\intercal
}\tilde{P}_{1,i}(t)+\tilde{P}_{2,i}(t))D_{2}(t)\\
\text{ \ }+P_{2,i}(t)D_{1}(t)+D_{3}(t)-P_{3,i}(t)B_{2}(t)^{\intercal}%
L_{2,i}(t)^{-1}S_{1,i}(t)D_{2}(t)],
\end{array}
\]
which has already been verified in Step 1. The remainder terms on the LHS is%
\[%
\begin{array}
[c]{l}%
-P_{3,i}(t)B_{2}(t)^{\intercal}\tilde{P}_{1,i}(t)A_{2}(t)+P_{3,i}%
(t)d(t)D(t)^{-1}b(t)^{\intercal}\\
+\left(  P_{2,i}(t)C_{1}(t)+C_{3}(t)\right)  D(t)^{-1}d(t)P_{3,i}(t)\tilde
{P}_{2,i}(t)-P_{3,i}(t)B_{2}(t)^{\intercal}L_{2,i}(t)^{-1}L_{4,i}(t)\tilde
{P}_{2,i}(t)
\end{array}
\]
and the ones on the RHS is%
\[
-\left(  P_{2,i}(t)C_{1}(t)+C_{3}(t)\right)  D(t)^{-1}\left(  b(t)^{\intercal
}+d(t)^{\intercal}P_{2,i}(t)\right)  -P_{3,i}(t)B_{2}(t)^{\intercal}%
L_{2,i}(t)^{-1}L_{3,i}(t).
\]
By relations (\ref{app-3}), (\ref{app-6}), (\ref{app-7}), (\ref{app-8}) and
(\ref{re-appen}), we obtain that (\ref{appendix-compare-p2}) holds.

\textbf{Step 3}: In this step we verify $P_{1,i}(t)=\tilde{P}_{1,i}%
(t)-\tilde{P}_{2,i}(t)^{\intercal}\tilde{P}_{3,i}(t)^{-1}\tilde{P}_{2,i}(t)$.

Since $P_{2,i}(t)=-\tilde{P}_{3,i}(t)^{-1}\tilde{P}_{2,i}(t)$, $P_{3,i}%
(t)=\tilde{P}_{3,i}(t)^{-1}$, we have
\[
\tilde{P}_{1,i}(t)-\tilde{P}_{2,i}(t)^{\intercal}\tilde{P}_{3,i}(t)^{-1}%
\tilde{P}_{2,i}(t)=\tilde{P}_{1,i}(t)+\tilde{P}_{2,i}(t)^{\intercal}%
P_{2,i}(t).
\]
Deriving on both sides of the above equation,
\begin{equation}%
\begin{array}
[c]{l}%
\left(  \tilde{P}_{1,i}(t)-\tilde{P}_{2,i}(t)^{\intercal}\tilde{P}%
_{3,i}(t)^{-1}\tilde{P}_{2,i}(t)\right)  ^{^{\prime}}+\tilde{P}_{1,i}%
(t)A_{1}-\tilde{P}_{2,i}(t)^{\intercal}A_{3}(t)+A_{1}(t)^{\intercal}\tilde
{P}_{1,i}(t)\\
-A_{3}(t)^{\intercal}\tilde{P}_{2,i}(t)^{\intercal}+A_{2}(t)^{\intercal}%
\tilde{P}_{1,i}(t)A_{2}(t)+A_{4}(t)\\
-\left[  a(t)a_{11}(t)a(t)^{\intercal}+b(t)a_{21}(t)a(t)^{\intercal
}+a(t)a_{21}(t)^{\intercal}b(t)^{\intercal}+b(t)a_{22}(t)b(t)^{\intercal
}\right] \\
+\{\tilde{P}_{2,i}(t)A_{1}-\tilde{P}_{3,i}(t)A_{3}(t)+B_{1}(t)^{\intercal
}\tilde{P}_{1,i}(t)-B_{3}(t)^{\intercal}\tilde{P}_{2,i}(t)+B_{2}%
(t)^{\intercal}\tilde{P}_{1,i}(t)A_{2}(t)\\
-\left[  c(t)a_{11}(t)a(t)^{\intercal}+d(t)a_{21}(t)a(t)^{\intercal
}+c(t)a_{21}(t)^{\intercal}b(t)^{\intercal}+d(t)a_{22}(t)b(t)^{\intercal
}\right]  \}^{\intercal}P_{2,i}(t)\\
+\tilde{P}_{2,i}(t)^{\intercal}\{P_{2,i}(t)A_{1}(t)+P_{2,i}(t)B_{1}%
(t)P_{2,i}(t)+P_{2,i}(t)C_{1}(t)L_{10,i}(t)+P_{2,i}(t)D_{1}(t)L_{6,i}(t)\\
-P_{3,i}(t)B_{1}(t)^{\intercal}P_{1,i}(t)-P_{3,i}(t)B_{2}(t)^{\intercal
}L_{8,i}(t)-P_{3,i}(t)B_{4}(t)P_{2,i}(t)\\
+A_{3}(t)+B_{3}(t)P_{2,i}(t)+C_{3}(t)L_{10,i}(t)+D_{3}(t)L_{6,i}(t)\}=0.
\end{array}
\label{eq-p321-til}%
\end{equation}
Putting the relation (\ref{re-appen}) into (\ref{eq-p321-til}) and comparing
with (\ref{eq-p1}), we need to verify%
\[%
\begin{array}
[c]{l}%
P_{2,i}(t)^{\intercal}P_{3,i}(t)^{-1}P_{2,i}(t)A_{1}(t)+A_{1}(t)^{\intercal
}P_{2,i}(t)^{\intercal}P_{3,i}(t)^{-1}P_{2,i}(t)-\tilde{P}_{2,i}%
(t)^{\intercal}A_{3}(t)-A_{3}(t)^{\intercal}\tilde{P}_{2,i}(t)\\
+A_{2}(t)^{\intercal}\tilde{P}_{1,i}(t)A_{2}(t)-\left[  a(t)a_{11}%
(t)a(t)^{\intercal}+b(t)a_{21}(t)a(t)^{\intercal}+a(t)a_{21}(t)^{\intercal
}b(t)^{\intercal}+b(t)a_{22}(t)b(t)^{\intercal}\right] \\
+\{\tilde{P}_{2,i}(t)A_{1}-\tilde{P}_{3,i}(t)A_{3}(t)+B_{1}(t)^{\intercal
}P_{2,i}(t)^{\intercal}P_{3,i}(t)^{-1}P_{2,i}(t)-B_{3}(t)^{\intercal}\tilde
{P}_{2,i}(t)+B_{2}(t)^{\intercal}\tilde{P}_{1,i}(t)A_{2}(t)\\
-\left[  c(t)a_{11}(t)a(t)^{\intercal}+d(t)a_{21}(t)a(t)^{\intercal
}+c(t)a_{21}(t)^{\intercal}b(t)^{\intercal}+d(t)a_{22}(t)b(t)^{\intercal
}\right]  \}^{\intercal}P_{2,i}(t)\\
+\tilde{P}_{2,i}(t)^{\intercal}\{P_{2,i}(t)A_{1}(t)+P_{2,i}(t)B_{1}%
(t)P_{2,i}(t)+P_{2,i}(t)C_{1}(t)L_{10,i}(t)+P_{2,i}(t)D_{1}(t)L_{6,i}(t)\\
-P_{3,i}(t)B_{2}(t)^{\intercal}L_{8,i}(t)+A_{3}(t)+B_{3}(t)P_{2,i}%
(t)+C_{3}(t)L_{10,i}(t)+D_{3}(t)L_{6,i}(t)\}\\
=P_{1,i}(t)C_{1}(t)L_{10,i}(t)+P_{1,i}(t)D_{1}(t)L_{6,i}(t)+(A_{2}%
(t)+B_{2}(t)P_{2,i}(t))^{\intercal}L_{8,i}(t),
\end{array}
\]
that is
\[%
\begin{array}
[c]{l}%
A_{2}(t)^{\intercal}\left(  P_{1,i}(t)+P_{2,i}(t)^{\intercal}P_{3,i}%
(t)^{-1}P_{2,i}(t)\right)  (A_{2}(t)+B_{2}(t)P_{2,i}(t))\\
-\tilde{P}_{2,i}(t)^{\intercal}\left(  P_{2,i}(t)C_{1}(t)+C_{3}(t)\right)
D(t)^{-1}\left(  b(t)^{\intercal}+d(t)^{\intercal}P_{2,i}(t)\right) \\
-\tilde{P}_{2,i}(t)^{\intercal}P_{3,i}(t)B_{2}(t)^{\intercal}L_{2,i}%
(t)^{-1}L_{3,i}(t)\\
-\left[  a(t)a_{11}(t)a(t)^{\intercal}+b(t)a_{21}(t)a(t)^{\intercal
}+a(t)a_{21}(t)^{\intercal}b(t)^{\intercal}+b(t)a_{22}(t)b(t)^{\intercal
}\right] \\
-\left[  c(t)a_{11}(t)a(t)^{\intercal}+d(t)a_{21}(t)a(t)^{\intercal
}+c(t)a_{21}(t)^{\intercal}b(t)^{\intercal}+d(t)a_{22}(t)b(t)^{\intercal
}\right]  ^{\intercal}P_{2,i}(t)\\
-\tilde{P}_{2,i}(t)^{\intercal}\{\left(  P_{2,i}(t)C_{1}(t)+C_{3}(t)\right)
[a_{21}(t)\left(  a(t)^{\intercal}+c(t)^{\intercal}P_{2,i}(t)\right)
+D(t)^{-1}\left(  C_{2}(t)^{\intercal}\tilde{P}_{1,i}(t)+\tilde{P}%
_{2,i}(t)\right) \\
\cdot D_{2}(t)a_{11}(t)D_{2}(t)^{\intercal}\left(  \tilde{P}_{1,i}%
(t)C_{2}(t)+\tilde{P}_{2,i}(t)^{\intercal}\right)  D(t)^{-1}\left(
b(t)^{\intercal}+d(t)^{\intercal}P_{2,i}(t)\right)  ]\}\\
-\tilde{P}_{2,i}(t)^{\intercal}\{\left(  P_{2,i}(t)D_{1}(t)+D_{3}%
(t)-P_{3,i}(t)B_{2}(t)^{\intercal}L_{2,i}(t)^{-1}S_{1}(t)D_{2}(t)\right) \\
\cdot\left[  a_{11}(t)\left(  a(t)^{\intercal}+c(t)^{\intercal}P_{2,i}%
(t)\right)  +a_{21}(t)^{\intercal}\left(  b(t)^{\intercal}+d(t)^{\intercal
}P_{2,i}(t)\right)  \right]  \}\\
=-\left\{  P_{1,i}(t)C_{1}(t)D(t)^{-1}\left(  C_{2}(t)^{\intercal}\tilde
{P}_{1,i}(t)+\tilde{P}_{2,i}(t)\right)  D_{2}(t)-P_{1,i}(t)D_{1}(t)\right. \\
\left.  -(A_{2}(t)+B_{2}(t)P_{2,i}(t))^{\intercal}L_{2,i}(t)^{-1}S_{1}%
(t)D_{2}(t)\displaystyle\right\} \\
a_{11}(t)[D_{2}(t)^{\intercal}\left(  C_{2}(t)^{\intercal}\tilde{P}%
_{1,i}(t)+\tilde{P}_{2,i}(t)\right)  ^{\intercal}D(t)^{-1}\left(
b(t)^{\intercal}+d(t)^{\intercal}P_{2,i}(t)\right)  -a(t)^{\intercal
}-c(t)^{\intercal}P_{2,i}(t)]\\
+P_{1,i}(t)C_{1}(t)D(t)^{-1}\left(  b(t)^{\intercal}+d(t)^{\intercal}%
P_{2,i}(t)\right)  +(A_{2}(t)+B_{2}(t)P_{2,i}(t))^{\intercal}L_{2,i}%
(t)^{-1}L_{3,i}(t).
\end{array}
\]
The coefficient of $a_{11}(\cdot)$ on the LHS is
\[%
\begin{array}
[c]{l}%
-\left\{  b(t)D(t)^{-1}\left(  C_{2}(t)^{\intercal}\tilde{P}_{1,i}%
(t)+\tilde{P}_{2,i}(t)\right)  D_{2}(t)-a(t)+\tilde{P}_{2,i}(t)^{\intercal
}\right. \\
\lbrack\left(  P_{2,i}(t)C_{1}(t)+C_{3}(t)\right)  D(t)^{-1}\left(
C_{2}(t)^{\intercal}\tilde{P}_{1,i}(t)+\tilde{P}_{2,i}(t)\right)
D_{2}(t)-P_{2,i}(t)D_{1}(t)\\
\left.  -D_{3}(t)+P_{3,i}(t)B_{2}(t)^{\intercal}L_{2,i}(t)^{-1}S_{1,i}%
(t)D_{2}(t)]\displaystyle\right\} \\
a_{11}(t)[D_{2}(t)^{\intercal}\left(  C_{2}(t)^{\intercal}\tilde{P}%
_{1,i}(t)+\tilde{P}_{2,i}(t)\right)  ^{\intercal}D(t)^{-1}\left(
b(t)^{\intercal}+d(t)^{\intercal}P_{2,i}(t)\right)  -a(t)^{\intercal
}-c(t)^{\intercal}P_{2,i}(t)],
\end{array}
\]
and the one on the RHS is
\[%
\begin{array}
[c]{l}%
-\displaystyle\left\{  P_{1,i}(t)C_{1}(t)D(t)^{-1}\left(  C_{2}(t)^{\intercal
}\tilde{P}_{1,i}(t)+\tilde{P}_{2,i}(t)\right)  D_{2}(t)-P_{1,i}(t)D_{1}%
(t)\right. \\
\left.  -(A_{2}(t)+B_{2}(t)P_{2,i}(t))^{\intercal}L_{2,i}(t)^{-1}%
S_{1,i}(t)D_{2}(t)\displaystyle\right\} \\
a_{11}(t)[D_{2}(t)^{\intercal}\left(  C_{2}(t)^{\intercal}\tilde{P}%
_{1,i}(t)+\tilde{P}_{2,i}(t)\right)  ^{\intercal}D(t)^{-1}\left(
b(t)^{\intercal}+d(t)^{\intercal}P_{2,i}(t)\right)  -a(t)^{\intercal
}-c(t)^{\intercal}P_{2,i}(t)].
\end{array}
\]
By (\ref{app-4}), (\ref{app-8}), (\ref{app-9}), we derive%
\[%
\begin{array}
[c]{l}%
b(t)D(t)^{-1}\left(  C_{2}(t)^{\intercal}\tilde{P}_{1,i}(t)+\tilde{P}%
_{2,i}(t)\right)  D_{2}(t)-a(t)+\tilde{P}_{2,i}(t)[\left(  P_{2,i}%
(t)C_{1}(t)+C_{3}(t)\right)  D(t)^{-1}\\
\cdot\left(  C_{2}(t)^{\intercal}\tilde{P}_{1,i}(t)+\tilde{P}_{2,i}(t)\right)
D_{2}(t)-P_{2,i}(t)D_{1}(t)-D_{3}(t)+P_{3,i}(t)B_{2}(t)^{\intercal}%
L_{2,i}(t)^{-1}S_{1,i}(t)D_{2}(t)]\\
=P_{1,i}(t)C_{1}(t)D(t)^{-1}\left(  C_{2}(t)^{\intercal}\tilde{P}%
_{1,i}(t)+\tilde{P}_{2,i}(t)\right)  D_{2}(t)-P_{1,i}(t)D_{1}(t)\\
\text{ \ \ }-(A_{2}(t)+B_{2}(t)P_{2,i}(t))^{\intercal}L_{2,i}(t)^{-1}%
S_{1,i}(t)D_{2}(t)\\
=[D_{2}(t)^{\intercal}\left(  C_{2}(t)^{\intercal}\tilde{P}_{1,i}(t)+\tilde
{P}_{2,i}(t)\right)  ^{\intercal}D(t)^{-1}\left(  b(t)^{\intercal
}+d(t)^{\intercal}P_{2,i}(t)\right)  -a(t)^{\intercal}-c(t)^{\intercal}%
P_{2,i}(t)]^{\intercal}.
\end{array}
\]
The remainder terms on the LHS is
\[%
\begin{array}
[c]{l}%
A_{2}(t)^{\intercal}\left(  P_{1,i}(t)+P_{2,i}(t)^{\intercal}P_{3,i}%
(t)^{-1}P_{2,i}(t)\right)  (A_{2}(t)+B_{2}(t)P_{2,i}(t))+P_{2,i}%
(t)^{\intercal}B_{2}(t)^{\intercal}L_{2,i}(t)^{-1}L_{3,i}(t)\\
-\left[  b(t)+\tilde{P}_{2,i}(t)\left(  P_{2,i}(t)C_{1}(t)+C_{3}(t)\right)
\right]  D(t)^{-1}\left(  b(t)^{\intercal}+d(t)^{\intercal}P_{2,i}(t)\right)
,
\end{array}
\]
and the ones on the RHS is
\[
-P_{1,i}(t)C_{1}(t)D(t)^{-1}\left(  b(t)^{\intercal}+d(t)^{\intercal}%
P_{2,i}(t)\right)  +(A_{2}(t)+B_{2}(t)P_{2,i}(t))^{\intercal}L_{2,i}%
(t)^{-1}L_{3,i}(t).
\]
By the definition of $b(t)$ and
\[%
\begin{array}
[c]{rc}%
A_{2}(t)^{\intercal}L_{2,i}(t)^{-1}L_{3,i}(t)= & A_{2}(t)^{\intercal}\left(
P_{1,i}(t)+P_{2,i}(t)^{\intercal}P_{3,i}(t)^{-1}P_{2,i}(t)\right)
(A_{2}(t)+B_{2}(t)P_{2,i}(t))\\
& -A_{2}(t)^{\intercal}\left(  C_{2}(t)^{\intercal}\tilde{P}_{1,i}%
(t)+\tilde{P}_{2,i}(t)\right)  ^{\intercal}D(t)^{-1}\left(  b(t)^{\intercal
}+d(t)^{\intercal}P_{2,i}(t)\right)  ,
\end{array}
\]
the remainder terms on both sides are consistent.

In the following two steps, we verify the relationship between $_{i}%
\varphi(\cdot)$ and $_{i}\tilde{\varphi}(\cdot)$, that is,
\[%
\begin{array}
[c]{cc}%
\tilde{\varphi}_{1,i}(t)=-P_{1,i}(t)^{-1}\varphi_{1,i}(t), & \tilde{\varphi
}_{2,i}(t)=\varphi_{2,i}(t)-P_{2,i}(t)P_{1,i}(t)^{-1}\varphi_{1,i}(t),\\
\tilde{v}_{1,i}(t)=-P_{1,i}(t)^{-1}v_{1,i}(t), & \tilde{v}_{2,i}%
(t)=v_{2,i}(t)-P_{2,i}(t)P_{1,i}(t)^{-1}v_{1,i}(t).
\end{array}
\]
Since $d_{i}\tilde{\varphi}(t)=-$ $_{i}\tilde{\gamma}(t)dt+$ $_{i}\tilde
{v}(t)dB(t)$, the above relations are equivalent to
\[
\tilde{\gamma}_{1,i}(t)=-\left\{  P_{1,i}(t)^{-1}\dot{P}_{1,i}(t)P_{1,i}%
(t)\varphi_{1,i}(t)+P_{1,i}(t)^{-1}\gamma_{1,i}(t)\right\}  ,
\]
and
\[%
\begin{array}
[c]{rl}%
\tilde{\gamma}_{2,i}(t)= & \gamma_{2,i}(t)+\dot{P}_{2,i}(t)P_{1,i}%
(t)^{-1}\varphi_{1,i}(t)-P_{2,i}(t)P_{1,i}(t)^{-1}\dot{P}_{1,i}(t)P_{1,i}%
(t)\varphi_{1,i}(t)-P_{2,i}(t)P_{1,i}(t)^{-1}\gamma_{1,i}(t)\\
= & \gamma_{2,i}(t)+\dot{P}_{2,i}(t)P_{1,i}(t)^{-1}\varphi_{1,i}%
(t)+P_{2,i}(t)\tilde{\gamma}_{1,i}(t).
\end{array}
\]
In the completion-of-squares technique, $_{i}\tilde{\gamma}(t)$ satisfies
\[%
\begin{array}
[c]{l}%
\left(  \tilde{B}(t)^{\intercal}\text{ }_{i}\tilde{P}(t)+\tilde{D}%
(t)^{\intercal}\text{ }_{i}\tilde{P}(t)\tilde{C}(t)\right)  ^{\intercal
}(\tilde{R}(t)+\tilde{D}(t)^{\intercal}\text{ }_{i}\tilde{P}(t)\tilde
{D}(t))^{-1}\left(  \tilde{B}(t)^{\intercal}\text{ }_{i}\tilde{P}(t)\text{
}_{i}\tilde{\varphi}(t)-\tilde{D}(t)^{\intercal}\text{ }_{i}\tilde{P}(t)\text{
}_{i}\tilde{v}(t)\right) \\
\text{ \ \ \ }-\tilde{A}(t)^{\intercal}\text{ }_{i}\tilde{P}(t)\text{ }%
_{i}\tilde{\varphi}(t)-\text{ }_{i}\dot{\tilde{P}}(t)\text{ }_{i}%
\tilde{\varphi}(t)-\text{ }_{i}\tilde{P}(t)\text{ }_{i}\tilde{\gamma
}(t)-\tilde{C}(t)^{\intercal}\text{ }_{i}\tilde{P}(t)\text{ }_{i}\tilde
{v}(t)=0.
\end{array}
\]
Then
\[
\text{ }_{i}\tilde{P}(t)\text{ }_{i}\tilde{\gamma}(t)=\left(
\begin{array}
[c]{c}%
-\left[  \dot{P}_{1,i}(t)P_{1,i}(t)^{-1}\varphi_{1,i}(t)+\gamma_{1,i}%
(t)\right]  +P_{2,i}(t)^{\intercal}\left(  \gamma_{2,i}(t)+\dot{P}%
_{2,i}(t)P_{1,i}(t)^{-1}\varphi_{1,i}(t)\right) \\
\tilde{P}_{3,i}(t)\left(  \gamma_{2,i}(t)+\dot{P}_{2,i}(t)P_{1,i}%
(t)^{-1}\varphi_{1,i}(t)\right)
\end{array}
\right)  ,
\]
and we need to verify the following two equalities:%
\begin{equation}%
\begin{array}
[c]{l}%
-\left[  \left(  a(t)a_{11}(t)+b(t)a_{21}(t)\right)  c(t)^{\intercal}+\left(
a(t)a_{21}(t)^{\intercal}+b(t)a_{22}(t)\right)  d(t)^{\intercal}\right]
\varphi_{2,i}(t)-a(t)S_{3,i}(t)-b(t)S_{5,i}(t)\\
+A_{1}(t)^{\intercal}\left(  \tilde{P}_{1,i}(t)\tilde{\varphi}_{1,i}%
(t)+\tilde{P}_{2,i}(t)^{\intercal}\tilde{\varphi}_{2,i}(t)\right)
-A_{3}(t)^{\intercal}\left(  \tilde{P}_{2,i}(t)\tilde{\varphi}_{1,i}%
(t)+\tilde{P}_{3,i}(t)\tilde{\varphi}_{2,i}(t)\right)  +\dot{\tilde{P}}%
_{1,i}(t)\tilde{\varphi}_{1,i}(t)\\
+\dot{\tilde{P}}_{2,i}(t)\tilde{\varphi}_{2,i}(t)+A_{2}(t)^{\intercal}\left(
\tilde{P}_{1,i}(t)\tilde{v}_{1,i}(t)+\tilde{P}_{2,i}(t)^{\intercal}\tilde
{v}_{2,i}(t)\right) \\
=\left(  \tilde{P}_{2,i}(t)^{\intercal}\dot{P}_{2,i}(t)P_{1,i}(t)^{-1}-\dot
{P}_{1,i}(t)P_{1,i}(t)^{-1}\right)  \varphi_{1,i}(t)+\left(  \tilde{P}%
_{2,i}(t)^{\intercal}P_{2,i}(t)-P_{1,i}(t)\right) \\
\cdot\left[  B_{1}(t)\varphi_{2,i}(t)+C_{1}(t)S_{5,i}(t)+D_{1}(t)S_{3,i}%
(t)\right] \\
+\tilde{P}_{2,i}(t)^{\intercal}\left[  B_{3}(t)\varphi_{2,i}(t)+C_{3}%
(t)S_{5,i}(t)+D_{3}(t)S_{3,i}(t)\right]  -\left(  A_{1}(t)^{\intercal}%
\tilde{\varphi}_{1,i}(t)+A_{1}(t)^{\intercal}S_{4,i}(t)\right)
\end{array}
\label{eq-app-124-1}%
\end{equation}
and%
\begin{equation}%
\begin{array}
[c]{l}%
-\left[  \left(  c(t)a_{11}(t)+d(t)a_{21}(t)\right)  c(t)^{\intercal}+\left(
c(t)a_{21}(t)^{\intercal}+d(t)a_{22}(t)\right)  d(t)^{\intercal}\right]
\varphi_{2}(t)-c(t)S_{3,i}(t)-d(t)S_{5,i}(t)\\
+B_{1}(t)^{\intercal}\left(  \tilde{P}_{1,i}(t)\tilde{\varphi}_{1,i}%
(t)+\tilde{P}_{2,i}(t)^{\intercal}\tilde{\varphi}_{2,i}(t)\right)
-B_{3}(t)^{\intercal}\left(  \tilde{P}_{2,i}(t)\tilde{\varphi}_{1,i}%
(t)+\tilde{P}_{3,i}(t)\tilde{\varphi}_{2,i}(t)\right)  +\dot{\tilde{P}}%
_{2,i}(t)\tilde{\varphi}_{1,i}(t)\\
+\dot{\tilde{P}}_{3,i}(t)\tilde{\varphi}_{2,i}(t)+B_{2}(t)^{\intercal}\left(
\tilde{P}_{1,i}(t)\tilde{v}_{1,i}(t)+\tilde{P}_{2,i}(t)^{\intercal}\tilde
{v}_{2,i}(t)\right) \\
=\tilde{P}_{3,i}(t)\left\{  P_{2,i}(t)\left[  B_{1}(t)\varphi_{2,i}%
(t)+C_{1}(t)S_{5,i}(t)+D_{1}(t)S_{3,i}(t)\right]  \right. \\
-P_{3,i}(t)\left[  B_{1}(t)^{\intercal}\varphi_{1,i}(t)+B_{2}(t)^{\intercal
}S_{4,i}(t)+B_{4}(t)\varphi_{2,i}(t)\right] \\
\left.  +B_{3}(t)\varphi_{2,i}(t)+C_{3}(t)S_{5,i}(t)+D_{3}(t)S_{3,i}%
(t)\right\}  +\tilde{P}_{3,i}(t)\dot{P}_{2,i}(t)P_{1,i}(t)^{-1}\varphi
_{1,i}(t).
\end{array}
\label{eq-app-124-2}%
\end{equation}

\textbf{Step 4: }Verification of (\ref{eq-app-124-2}):

The equation (\ref{eq-app-124-2}) can be simplified to
\[%
\begin{array}
[c]{l}%
-\left[  \left(  c(t)a_{11}(t)+d(t)a_{21}(t)\right)  c(t)^{\intercal}+\left(
c(t)a_{21}(t)^{\intercal}+d(t)a_{22}(t)\right)  d(t)^{\intercal}\right]
\varphi_{2,i}(t)\\
+B_{1}(t)^{\intercal}\left(  \tilde{P}_{1,i}(t)\tilde{\varphi}_{1,i}%
(t)+\tilde{P}_{2,i}(t)^{\intercal}\tilde{\varphi}_{2,i}(t)\right)
-B_{3}(t)^{\intercal}\left(  \tilde{P}_{2,i}(t)\tilde{\varphi}_{1,i}%
(t)+\tilde{P}_{3,i}(t)\tilde{\varphi}_{2,i}(t)\right)  +\dot{\tilde{P}}%
_{2,i}(t)\tilde{\varphi}_{1,i}(t)\\
+\dot{\tilde{P}}_{3,i}(t)\tilde{\varphi}_{,i2}(t)+B_{2}(t)^{\intercal}\left(
\tilde{P}_{1,i}(t)\tilde{v}_{1,i}(t)+\tilde{P}_{2,i}(t)^{\intercal}\tilde
{v}_{2,i}(t)\right) \\
=\tilde{P}_{3,i}(t)\left\{  P_{2,i}(t)B_{1}(t)\varphi_{2,i}(t)-P_{3,i}%
(t)\left[  B_{1}(t)^{\intercal}\varphi_{1,i}(t)+B_{4}(t)\varphi_{2,i}%
(t)\right]  +B_{3}(t)\varphi_{2,i}(t)\right\} \\
+\tilde{P}_{3,i}(t)\dot{P}_{2,i}(t)P_{1,i}(t)^{-1}\varphi_{1,i}(t)-B_{2}%
(t)^{\intercal}P_{1,i}(t)B_{2}(t)\varphi_{2,i}(t)-B_{2}(t)^{\intercal}%
P_{2,i}(t)^{\intercal}C_{1}(t)^{\intercal}\varphi_{1,i}(t)-B_{2}%
(t)^{\intercal}v_{1,i}(t)\\
-B_{2}(t)^{\intercal}P_{2,i}(t)^{\intercal}P_{3,i}(t)^{-1}\left[
P_{2,i}(t)B_{2}(t)\varphi_{2,i}(t)+v_{2,i}(t)-P_{3,i}(t)C_{1}(t)^{\intercal
}\varphi_{1,i}(t)\right]  .
\end{array}
\]
Then we compare the coefficients of $\varphi_{1,i}(\cdot)$, $\varphi
_{2,i}(\cdot)$, $v_{1,i}(\cdot)$ and $v_{2,i}(\cdot)$ on both sides of the
above equation. The coefficient of $\varphi_{1,i}(\cdot)$ on the LHS is%
\[%
\begin{array}
[c]{l}%
-B_{1}(t)^{\intercal}\left(  \tilde{P}_{1,i}(t)P_{1,i}(t)^{-1}+\tilde{P}%
_{2,i}(t)^{\intercal}P_{2,i}(t)P_{1,i}(t)^{-1}\right)  +B_{3}(t)^{\intercal
}\left(  \tilde{P}_{2,i}(t)P_{1,i}(t)^{-1}+\tilde{P}_{3,i}(t)P_{2,i}%
(t)P_{1,i}(t)^{-1}\right) \\
-\dot{\tilde{P}}_{2,i}(t)P_{1,i}(t)^{-1}+\dot{\tilde{P}}_{3,i}(t)P_{2,i}%
(t)P_{1,i}(t)^{-1}%
\end{array}
\]
and the one on the RHS is%
\[
-B_{1}(t)^{\intercal}+\tilde{P}_{3,i}(t)\dot{P}_{2,i}(t)P_{1,i}(t)^{-1}%
-B_{2}(t)^{\intercal}P_{2,i}(t)^{\intercal}C_{1}(t)^{\intercal}+B_{2}%
(t)^{\intercal}P_{2,i}(t)^{\intercal}P_{3,i}(t)^{-1}P_{3,i}(t)C_{1}%
(t)^{\intercal}.
\]
The coefficient of $\varphi_{2,i}(\cdot)$ on the LHS is%
\[%
\begin{array}
[c]{l}%
-\left[  \left(  c(t)a_{11}(t)+d(t)a_{21}(t)\right)  c(t)^{\intercal}+\left(
c(t)a_{21}(t)^{\intercal}+d(t)a_{22}(t)\right)  d(t)^{\intercal}\right] \\
+B_{1}(t)^{\intercal}\tilde{P}_{2,i}(t)^{\intercal}-B_{3}(t)^{\intercal}%
\tilde{P}_{3,i}(t)+\dot{\tilde{P}}_{3,i}(t)
\end{array}
\]
and the one on the RHS is%
\[
\tilde{P}_{3,i}(t)P_{2,i}(t)B_{1}(t)-B_{4}(t)+\tilde{P}_{3,i}(t)B_{3}%
(t)-B_{2}(t)^{\intercal}P_{1,i}(t)B_{2}(t)-B_{2}(t)^{\intercal}P_{2,i}%
(t)^{\intercal}P_{3,i}(t)^{-1}P_{2,i}(t)B_{2}(t).
\]
The coefficient of $v_{1,i}(\cdot)$ on the LHS is
\[
-B_{2}(t)^{\intercal}\left(  \tilde{P}_{1,i}(t)P_{1,i}(t)^{-1}+\tilde{P}%
_{2,i}(t)^{\intercal}P_{2,i}(t)P_{1,i}(t)^{-1}\right)
\]
and the one on the RHS is $-B_{2}(t)^{\intercal}.$ The coefficient of
$v_{2,i}(\cdot)$ on the LHS is $B_{2}(t)^{\intercal}\tilde{P}_{2,i}%
(t)^{\intercal}$ and the one on the RHS is $-B_{2}(t)^{\intercal}%
P_{2,i}(t)^{\intercal}P_{3,i}(t)^{-1}.$ By the notations in (\ref{def-nota-1})
and (\ref{re-appen}), we obtain (\ref{eq-app-124-2}) holds.

\textbf{Step 5: }Verification of (\ref{eq-app-124-1}):

The equation (\ref{eq-app-124-1}) can be simplified to
\[%
\begin{array}
[c]{l}%
-\left[  \left(  a(t)a_{11}(t)+b(t)a_{21}(t)\right)  c(t)^{\intercal}+\left(
a(t)a_{21}(t)^{\intercal}+b(t)a_{22}(t)\right)  d(t)^{\intercal}\right]
\varphi_{2,i}(t)\\
+A_{1}(t)^{\intercal}\left(  \tilde{P}_{1,i}(t)\tilde{\varphi}_{1,i}%
(t)+\tilde{P}_{2,i}(t)^{\intercal}\tilde{\varphi}_{2,i}(t)\right)
-A_{3}(t)^{\intercal}\left(  \tilde{P}_{2,i}(t)\tilde{\varphi}_{1,i}%
(t)+\tilde{P}_{3,i}(t)\tilde{\varphi}_{2,i}(t)\right)  +\dot{\tilde{P}}%
_{1,i}(t)\tilde{\varphi}_{1,i}(t)\\
+\dot{\tilde{P}}_{2,i}(t)^{\intercal}\tilde{\varphi}_{2,i}(t)+A_{2}%
(t)^{\intercal}\left(  \tilde{P}_{1,i}(t)\tilde{v}_{1,i}(t)+\tilde{P}%
_{2,i}(t)^{\intercal}\tilde{v}_{2,i}(t)\right) \\
=\left(  \tilde{P}_{2,i}(t)^{\intercal}\dot{P}_{2,i}(t)P_{1,i}(t)^{-1}-\dot
{P}_{1,i}(t)P_{1,i}(t)^{-1}\right)  \varphi_{1,i}(t)+\left(  \tilde{P}%
_{2,i}(t)^{\intercal}P_{2,i}(t)-P_{1,i}(t)\right)  B_{1}(t)\varphi_{2,i}(t)\\
+\tilde{P}_{2,i}(t)^{\intercal}B_{3}(t)\varphi_{2,i}(t)-A_{1}(t)^{\intercal
}\tilde{\varphi}_{1,i}(t)+A_{2}(t)^{\intercal}\left\{  \left(  -P_{2,i}%
(t)^{\intercal}P_{3,i}(t)^{-1}\right)  \right. \\
\cdot\left(  P_{2,i}(t)B_{2}(t)\varphi_{2,i}(t)-P_{3,i}(t)C_{1}(t)^{\intercal
}\varphi_{1,i}(t)+v_{2,i}(t)\right)  \left.  -P_{1,i}(t)B_{2}(t)\varphi
_{2,i}(t)-P_{2,i}(t)^{\intercal}C_{1}(t)^{\intercal}\varphi_{1,i}%
(t)-v_{1,i}(t)\right\}  .
\end{array}
\]
By comparing the coefficients of $\varphi_{1,i}(\cdot)$, $\varphi_{2,i}%
(\cdot)$, $v_{1,i}(\cdot)$ and $v_{2,i}(\cdot)$ on both sides of the above
equation, we deduce that (\ref{eq-app-124-1}) holds. $\blacksquare$

\subsection{Proof of Lemma \ref{le-con-equ}}

{From (\ref{eq-Y-m}) and (\ref{eq-con-th}), the optimal control $(\bar
{u}(\cdot),\bar{Z}(\cdot))$ has the following form%
\[%
\begin{array}
[c]{rl}%
\bar{u}(t)= & -\left[  a_{11}(t)\left(  a(t)^{\intercal}+c(t)^{\intercal
}P_{2,i}(t)\right)  +a_{21}(t)^{\intercal}\left(  b(t)^{\intercal
}+d(t)^{\intercal}P_{2,i}(t)\right)  \right]  \bar{X}(t)\\
& +\left(  a_{11}(t)c(t)^{\intercal}+a_{21}(t)^{\intercal}d(t)^{\intercal
}\right)  P_{3,i}(t)h(t)+a_{11}(t)\left[  \tilde{b}(t)\varphi_{2,i}%
(t)-\tilde{a}(t)\tilde{\varphi}_{1,i}(t)\right. \\
& \left.  -\tilde{b}(t)\tilde{\varphi}_{2,i}(t)+\bar{a}(t)B_{2}(t)\varphi
_{2,i}(t)-\bar{a}(t)\tilde{v}_{1,i}(t)-\bar{b}(t)\tilde{v}_{2,i}(t)\right] \\
& +a_{21}(t)^{\intercal}\left[  \tilde{d}(t)\varphi_{2,i}(t)-\tilde
{c}(t)\tilde{\varphi}_{1,i}(t)-\tilde{d}(t)\tilde{\varphi}_{2,i}(t)+\bar
{c}(t)B_{2}(t)\varphi_{2,i}(t)-\bar{c}(t)\tilde{v}_{1,i}(t)-\bar{d}%
(t)\tilde{v}_{2,i}(t)\right]  ,\\
\bar{Z}(t)= & -\left[  a_{21}(t)\left(  a(t)^{\intercal}+c(t)^{\intercal
}P_{2,i}(t)\right)  +a_{22}(t)\left(  b(t)^{\intercal}+d(t)^{\intercal}%
P_{2,i}(t)\right)  \right]  \bar{X}(t)\\
& +\left(  a_{21}(t)c(t)^{\intercal}+a_{22}(t)d(t)^{\intercal}\right)
P_{3,i}(t)h(t)+a_{21}(t)\left[  \tilde{b}(t)\varphi_{2,i}(t)-\tilde
{a}(t)\tilde{\varphi}_{1,i}(t)\right. \\
& \left.  -\tilde{b}(t)\tilde{\varphi}_{2,i}(t)+\bar{a}(t)B_{2}(t)\varphi
_{2,i}(t)-\bar{a}(t)\tilde{v}_{1,i}(t)-\bar{b}(t)\tilde{v}_{2,i}(t)\right] \\
& +a_{22}(t)\left[  \tilde{d}(t)\varphi_{2,i}(t)-\tilde{c}(t)\tilde{\varphi
}_{1,i}(t)-\tilde{d}(t)\tilde{\varphi}_{2,i}(t)+\bar{c}(t)B_{2}(t)\varphi
_{2,i}(t)-\bar{c}(t)\tilde{v}_{1,i}(t)-\bar{d}(t)\tilde{v}_{2,i}(t)\right]  .
\end{array}
\]
}

The following relations can be verified directly:%
\begin{equation}
\left(  \bar{a}(t)+\bar{b}(t)P_{2,i}(t)\right)  P_{1,i}(t)^{-1}=D_{2}%
(t)^{\intercal}, \label{app-2-1}%
\end{equation}%
\begin{equation}
\left(  \bar{c}(t)+\bar{d}(t)P_{2,i}(t)\right)  P_{1,i}(t)^{-1}=C_{2}%
(t)^{\intercal}, \label{app-2-2}%
\end{equation}%
\begin{equation}
\left(  \tilde{a}(t)+\tilde{b}(t)P_{2,i}(t)\right)  P_{1,i}(t)^{-1}%
=D_{1}(t)^{\intercal}, \label{app-2-3}%
\end{equation}%
\begin{equation}
\left(  \tilde{c}(t)+\tilde{d}(t)P_{2,i}(t)\right)  P_{1,i}(t)^{-1}%
=C_{1}(t)^{\intercal}, \label{app-2-4}%
\end{equation}%
\begin{equation}%
\begin{array}
[c]{l}%
L_{1,i}(t)^{-1}\left(  P_{2,i}(t)-P_{3,i}(t)C_{2}(t)^{\intercal}%
L_{2,i}(t)^{-1}S_{1,i}(t)\right) \\
=-D(t)^{-1}\left(  C_{2}(t)^{\intercal}\tilde{P}_{1,i}(t)+\tilde{P}%
_{2,i}(t)\right)  ,
\end{array}
\label{app-2-5}%
\end{equation}%
\begin{equation}%
\begin{array}
[c]{l}%
L_{2,i}(t)^{-1}\left[  P_{2,i}(t)^{\intercal}-(P_{1,i}(t)C_{2}(t)+P_{2,i}%
(t)^{\intercal}C_{4}(t))L_{1,i}(t)^{-1}P_{3,i}(t)\right] \\
=-\left(  C_{2}(t)^{\intercal}\tilde{P}_{1,i}(t)+\tilde{P}_{2,i}(t)\right)
^{\intercal}D(t)^{-1},
\end{array}
\label{app-2-6}%
\end{equation}%
\begin{equation}%
\begin{array}
[c]{l}%
\left(  \tilde{P}_{1,i}(t)C_{2}(t)+\tilde{P}_{2,i}(t)^{\intercal}-P_{_{1,i}%
}(t)C_{2}(t)-P_{2,i}(t)^{\intercal}C_{4}(t)\right)  L_{1,i}(t)^{-1}\\
=-P_{2,i}(t)^{\intercal}P_{3,i}(t)^{-1},
\end{array}
\label{app-2-7}%
\end{equation}%
\begin{equation}%
\begin{array}
[c]{l}%
\left[  I_{m}-C_{2}(t)^{\intercal}P_{2,i}(t)^{\intercal}+C_{2}(t)^{\intercal
}\left(  P_{1,i}(t)C_{2}(t)+P_{2,i}(t)^{\intercal}C_{4}(t)\right)
L_{1,i}(t)^{-1}P_{3,i}(t)\right] \\
=D(t)L_{1,i}(t)^{-1}P_{3,i}(t).
\end{array}
\label{app-2-8}%
\end{equation}

By notations in (\ref{def-nota-1}), (\ref{app-5}), (\ref{app-8}) and
(\ref{app-9}), it can be verified that%
\begin{equation}%
\begin{array}
[c]{rl}%
L_{6,i}(t)= & -\left[  a_{11}(t)\left(  a(t)^{\intercal}+c(t)^{\intercal
}P_{2,i}(t)\right)  +a_{21}(t)^{\intercal}\left(  b(t)^{\intercal
}+d(t)^{\intercal}P_{2,i}(t)\right)  \right]  ,\\
L_{3,i}(t)= & \left(  a_{11}(t)c(t)^{\intercal}+a_{21}(t)^{\intercal
}d(t)^{\intercal}\right)  P_{3,i}(t),\\
L_{10,i}(t)= & -\left[  a_{21}(t)\left(  a(t)^{\intercal}+c(t)^{\intercal
}P_{2,i}(t)\right)  +a_{22}(t)\left(  b(t)^{\intercal}+d(t)^{\intercal}%
P_{2,i}(t)\right)  \right]  ,\\
L_{11,i}(t)= & \left(  a_{21}(t)c(t)^{\intercal}+a_{22}(t)d(t)^{\intercal
}\right)  P_{3,i}(t).
\end{array}
\label{eq-re1}%
\end{equation}

Before proving Lemma \ref{le-con-equ}, we give the following lemma:

\begin{lemma}
\label{le-re-s}Under the same assumptions as Theorem \ref{th-p-ex}, for $i\geq
i_{0}$, we have%
\begin{equation}%
\begin{array}
[c]{rl}%
S_{3,i}(t)= & a_{11}(t)\left[  \tilde{b}(t)\varphi_{2,i}(t)-\tilde{a}%
(t)\tilde{\varphi}_{1,i}(t)-\tilde{b}(t)\tilde{\varphi}_{2,i}(t)+\bar
{a}(t)B_{2}(t)\varphi_{2,i}(t)-\bar{a}(t)\tilde{v}_{1,i}(t)-\bar{b}%
(t)\tilde{v}_{2,i}(t)\right] \\
& +a_{21}(t)^{\intercal}\left[  \tilde{d}(t)\varphi_{2,i}(t)-\tilde
{c}(t)\tilde{\varphi}_{1,i}(t)-\tilde{d}(t)\tilde{\varphi}_{2,i}(t)+\bar
{c}(t)B_{2}(t)\varphi_{2,i}(t)-\bar{c}(t)\tilde{v}_{1,i}(t)-\bar{d}%
(t)\tilde{v}_{2,i}(t)\right]  ,\\
S_{5,i}(t)= & a_{21}(t)\left[  \tilde{b}(t)\varphi_{2,i}(t)-\tilde{a}%
(t)\tilde{\varphi}_{1,i}(t)-\tilde{b}(t)\tilde{\varphi}_{2,i}(t)+\bar
{a}(t)B_{2}(t)\varphi_{2,i}(t)-\bar{a}(t)\tilde{v}_{1,i}(t)-\bar{b}%
(t)\tilde{v}_{2,i}(t)\right] \\
& +a_{22}(t)\left[  \tilde{d}(t)\varphi_{2,i}(t)-\tilde{c}(t)\tilde{\varphi
}_{1,i}(t)-\tilde{d}(t)\tilde{\varphi}_{2,i}(t)+\bar{c}(t)B_{2}(t)\varphi
_{2,i}(t)-\bar{c}(t)\tilde{v}_{1,i}(t)-\bar{d}(t)\tilde{v}_{2,i}(t)\right]  .
\end{array}
\label{eq-s-app}%
\end{equation}

\end{lemma}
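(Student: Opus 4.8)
The right-hand sides of (\ref{eq-s-app}) are precisely the parts of the displayed completion-of-squares control $(\bar{u}(\cdot),\bar{Z}(\cdot))$ that are independent of $\bar{X}(t)$ and $h(t)$; they arise by inserting the decoupling relation $\bar{Y}(t)=P_{2,i}(t)\bar{X}(t)-P_{3,i}(t)h(t)+\varphi_{2,i}(t)$ of (\ref{eq-Y-m}) into (\ref{eq-con-th}) and reading off the block expansions (\ref{app-def1}), (\ref{app-def2}), (\ref{def-nota-2}). The quantities $S_{3,i}(\cdot)$ and $S_{5,i}(\cdot)$, on the other hand, are the inhomogeneous terms produced by the decoupling technique, defined through (\ref{def-lr}), (\ref{def-L1}) and (\ref{def-l8}) with each $P_{j}$ replaced by $P_{j,i}$. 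The plan is therefore to verify the two identities in (\ref{eq-s-app}) directly, exactly as the linear coefficients were matched in (\ref{eq-re1}); the underlying reason the identities must hold is that, by Theorem \ref{th-p-ex}, the data $(P_{1,i},P_{2,i},P_{3,i})$ and $(\,_{i}\varphi,\,_{i}v)$ built from $\,_{i}\tilde{P},\,_{i}\tilde{\varphi},\,_{i}\tilde{v}$ solve the decoupling system, so the decoupling control (\ref{eq-u-infi})--(\ref{eq-nz}) and the completion-of-squares control describe the same pair $(\bar{u},\bar{Z})$.

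First I would reduce $S_{3,i}$. Since $L_{5,i}(t)=D_{4}(t)+D_{2}(t)^{\intercal}L_{2,i}(t)^{-1}S_{1,i}(t)D_{2}(t)$, relation (\ref{app-5}) gives $a_{11}(t)=L_{5,i}(t)^{-1}$, whence $S_{3,i}(t)=-a_{11}(t)[D_{1}(t)^{\intercal}\varphi_{1,i}(t)+D_{2}(t)^{\intercal}L_{2,i}(t)^{-1}S_{2,i}(t)]$, an expression in $\varphi_{1,i},\varphi_{2,i},v_{1,i},v_{2,i}$ once $S_{2,i}$ is expanded from (\ref{def-lr}). On the other side I would convert the target by the relations $\tilde{\varphi}_{1,i}=-P_{1,i}^{-1}\varphi_{1,i}$, $\tilde{\varphi}_{2,i}=\varphi_{2,i}-P_{2,i}P_{1,i}^{-1}\varphi_{1,i}$, $\tilde{v}_{1,i}=-P_{1,i}^{-1}v_{1,i}$, $\tilde{v}_{2,i}=v_{2,i}-P_{2,i}P_{1,i}^{-1}v_{1,i}$ of Theorem \ref{th-p-ex}, so that it, too, becomes an expression in $\varphi_{1,i},\varphi_{2,i},v_{1,i},v_{2,i}$. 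Matching the two then reduces to comparing, one family at a time, the coefficients of $\varphi_{1,i},\varphi_{2,i},v_{1,i},v_{2,i}$, exactly as in Steps 4 and 5 of the proof of Theorem \ref{th-p-ex}. The products that appear are collapsed by $(\bar{a}+\bar{b}P_{2,i})P_{1,i}^{-1}=D_{2}^{\intercal}$, $(\tilde{a}+\tilde{b}P_{2,i})P_{1,i}^{-1}=D_{1}^{\intercal}$ and their analogues (\ref{app-2-1})--(\ref{app-2-4}), and by the inverse-conversion identities (\ref{app-2-5})--(\ref{app-2-8}) that turn $L_{1,i}^{-1}$ and $L_{2,i}^{-1}$ into the $D(t)^{-1}$, $a_{21}$, $a_{22}$ blocks of (\ref{def-nota-1}). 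For $S_{5,i}$ I would proceed afterwards, since (\ref{def-l8}) builds it from the already-reduced $S_{3,i}$, from $S_{4,i}=L_{2,i}^{-1}[S_{1,i}D_{2}S_{3,i}+S_{2,i}]$, and from an $L_{1,i}^{-1}$ prefactor converted through the same identities; its prefactors $a_{21},a_{22}$ match the second row of the completion-of-squares control.

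The main obstacle is the nested definition of the source terms: $S_{2,i},S_{3,i},S_{4,i},S_{5,i}$ form a chain, each built from its predecessors and from the inverses $L_{1,i}^{-1},L_{2,i}^{-1},L_{5,i}^{-1}$, whereas the target (\ref{eq-s-app}) is a flat expression in the blocks $a_{11},a_{21},a_{22}$ and $\tilde{a},\tilde{b},\tilde{c},\tilde{d},\bar{a},\bar{b},\bar{c},\bar{d}$. Unwinding this chain and systematically replacing every $L$-type inverse by the completion-of-squares blocks $D(t)^{-1},a_{11},a_{21},a_{22}$ is where essentially all the work lies; I expect the decisive identities to be (\ref{app-2-5})--(\ref{app-2-8}), just as they were for the homogeneous coefficients in (\ref{eq-re1}). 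Because the computation runs term-by-term in exactly the same way as the verification of (\ref{eq-re1}) and of the $\tilde{\gamma}$-relations in Steps 4--5 above, only now carrying the forcing terms $\varphi_{2,i},\tilde{\varphi}_{1,i},\tilde{\varphi}_{2,i},\tilde{v}_{1,i},\tilde{v}_{2,i}$ rather than the coefficients of $\bar{X}(t)$ and $h(t)$, no genuinely new difficulty beyond the algebraic bookkeeping should arise.
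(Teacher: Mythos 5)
Your proposal is correct and follows essentially the same route as the paper's own proof: reduce both sides to flat expressions in $\varphi_{1,i},\varphi_{2,i},v_{1,i},v_{2,i}$ (using $a_{11}=L_{5,i}^{-1}$ from (\ref{app-5}) and the inversion of the $\varphi$--$\tilde{\varphi}$ relations of Theorem \ref{th-p-ex}), match coefficients family by family via the identities (\ref{def-nota-1})--(\ref{app-9}) and (\ref{app-2-1})--(\ref{app-2-8}), treating $S_{3,i}$ first and then unwinding the nested definition of $S_{5,i}$ through $S_{2,i}$ and $S_{4,i}$. This is exactly the coefficient-comparison argument the paper carries out in Appendix 7.2.
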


\begin{proof}
We first prove the equality for $S_{3,i}(\cdot)$. Compare the coefficients of
$\varphi_{1,i}(\cdot)$, $\varphi_{2,i}(\cdot)$, $v_{1,i}(\cdot)$ and
$v_{2,i}(\cdot)$ for $S_{3,i}(\cdot)$ in (\ref{eq-s-app}) with the ones in
(\ref{def-L1}). By the notations in (\ref{def-nota-1})-(\ref{app-9}) and
(\ref{app-2-1})-(\ref{app-2-8}), we obtain the equality for $S_{3,i}(\cdot)$ holds.

Then, we prove the equality for $S_{5,i}(\cdot)$. Putting $S_{2,i}(\cdot)$,
$S_{3,i}(\cdot)$ and $S_{4,i}(\cdot)$ into $S_{5,i}(\cdot)$, the equality for
$S_{5,i}(\cdot)$ becomes%
\begin{equation}%
\begin{array}
[c]{l}%
S_{5,i}(t)\\
=-\left\{  \left[  P_{3,i}(t)C_{1}(t)^{\intercal}+\left(  P_{2,i}%
(t)-P_{3,i}(t)C_{2}(t)^{\intercal}L_{2,i}(t)^{-1}S_{1,i}(t)\right)
D_{2}(t)a_{11}(t)D_{1}(t)^{\intercal}\right]  \varphi_{1,i}(t)-P_{2,i}%
(t)B_{2}(t)\varphi_{2,i}(t)\right. \\
-v_{2,i}(t)+\left[  P_{3,i}(t)C_{2}(t)^{\intercal}L_{2,i}(t)^{-1}+\left(
P_{2,i}(t)-P_{3,i}(t)C_{2}(t)^{\intercal}L_{2,i}(t)^{-1}S_{1,i}(t)\right)
D_{2}(t)a_{11}(t)D_{2}(t)^{\intercal}L_{2,i}(t)^{-1}\right]  S_{2,i}(t).
\end{array}
\label{eq-s3}%
\end{equation}
Compare the coefficients of $\varphi_{1,i}(\cdot)$, $\varphi_{2,i}(\cdot)$,
$v_{1,i}(\cdot)$ and $v_{2,i}(\cdot)$ for $S_{5,i}(\cdot)$ in (\ref{eq-s-app})
with the ones in (\ref{eq-s3}). By the notations in (\ref{def-nota-1}%
)-(\ref{app-9}) and (\ref{app-2-1})-(\ref{app-2-8}), we obtain the equality
for $S_{5,i}(\cdot)$ holds.
\end{proof}

\textbf{Proof of Lemma \ref{le-con-equ}:} By the notations in (\ref{app-def1})
and (\ref{app-def2}), we have
\[%
\begin{array}
[c]{c}%
\left(  \tilde{R}(t)+\tilde{D}(t)^{\intercal}{}_{i}\tilde{P}(t)\tilde
{D}(t)\right)  ^{-1}\left(  \tilde{B}(t)^{\intercal}{}_{i}\tilde{P}%
(t)+\tilde{D}(t)^{\intercal}{}_{i}\tilde{P}(t)\tilde{C}(t)\right) \\
=\left(
\begin{array}
[c]{ccc}%
a_{11}(t)a(t)^{\intercal}+a_{21}(t)^{\intercal}b(t)^{\intercal} &  &
a_{11}(t)c(t)^{\intercal}+a_{21}(t)^{\intercal}d(t)^{\intercal}\\
a_{21}(t)a(t)^{\intercal}+a_{22}(t)^{\intercal}b(t)^{\intercal} &  &
a_{21}(t)c(t)^{\intercal}+a_{22}(t)^{\intercal}d(t)^{\intercal}%
\end{array}
\right)  .
\end{array}
\]
By the notations in (\ref{eq-re1}), we obtain the first relation in Lemma
\ref{le-con-equ} holds.

From the relationship of $_{i}P(\cdot)$, $_{i}\tilde{P}(\cdot)$, $_{i}%
\varphi(\cdot)$, and $_{i}\tilde{\varphi}(\cdot)$, we have{\small
\[%
\begin{array}
[c]{l}%
_{i}\tilde{\varphi}(t)^{\intercal}\text{ }_{i}\tilde{P}(t)\text{ }_{i}%
\tilde{\varphi}(t)\\
=\left[  \left(
\begin{array}
[c]{cc}%
-P_{1,i}(t)^{-1} & 0\\
-P_{2,i}(t)P_{1,i}(t)^{-1} & I_{m}%
\end{array}
\right)  \left(
\begin{array}
[c]{c}%
\varphi_{1,i}(t)\\
\varphi_{2,i}(t)
\end{array}
\right)  \right]  ^{\intercal}\left(
\begin{array}
[c]{cc}%
\tilde{P}_{1,i}(t) & \tilde{P}_{2,i}(t)^{\intercal}\\
\tilde{P}_{2,i}(t) & \tilde{P}_{3,i}(t)
\end{array}
\right)  \left[  \left(
\begin{array}
[c]{cc}%
-P_{1,i}(t)^{-1} & 0\\
-P_{2,i}(t)P_{1,i}(t)^{-1} & I_{m}%
\end{array}
\right)  \left(
\begin{array}
[c]{c}%
\varphi_{1,i}(t)\\
\varphi_{2,i}(t)
\end{array}
\right)  \right] \\
=\varphi_{1,i}(t)^{\intercal}P_{1,i}(t)^{-1}\varphi_{1,i}(t)+\varphi
_{2,i}(t)^{\intercal}P_{3,i}(t)^{-1}\varphi_{2,i}(t).
\end{array}
\]
} Due to (\ref{eq-re1}) and Lemma \ref{le-re-s}, we obtain
\[%
\begin{array}
[c]{l}%
\left(  \tilde{R}(t)+\tilde{D}(t)^{\intercal}{}_{i}\tilde{P}(t)\tilde
{D}(t)\right)  ^{-1}\left(  \tilde{B}(t)^{\intercal}{}_{i}\tilde{P}(t)\text{
}_{i}\tilde{\varphi}(t)+\tilde{D}(t)^{\intercal}{}_{i}\tilde{P}(t)\tilde
{C}(t)_{i}\tilde{v}(t)\right)  ^{\intercal}\\
=\left(
\begin{array}
[c]{c}%
S_{3,i}(t)\\
S_{5,i}(t)
\end{array}
\right)  +\left(
\begin{array}
[c]{c}%
\left(  a_{11}(t)c(t)^{\intercal}+a_{21}(t)^{\intercal}d(t)^{\intercal
}\right)  \varphi_{2,i}(t)\\
\left(  a_{21}(t)c(t)^{\intercal}+a_{22}(t)d(t)^{\intercal}\right)
\varphi_{2,i}(t)
\end{array}
\right) \\
=\left(
\begin{array}
[c]{c}%
S_{3,i}(t)+L_{7,i}(t)P_{3,i}(t)^{-1}\varphi_{2,i}(t)\\
S_{5,i}(t)+L_{11,i}(t)P_{3,i}(t)^{-1}\varphi_{2,i}(t)
\end{array}
\right)  .
\end{array}
\]
This completes the proof. $\blacksquare$

\subsection{Proof of Lemma \ref{le-gamma}\label{app-le-gamma}}

It can be verified that%
\[%
\begin{array}
[c]{l}%
\left(  P_{2,i}(t)C_{2}(t)-I_{m}\right)  L_{1,i}(t)^{-1}P_{3,i}(t)\\
=\left(  P_{2,i}(t)C_{2}(t)-I_{m}-P_{3,i}(t)C_{4}(t)\right)  L_{1,i}%
(t)^{-1}P_{3,i}(t)+P_{3,i}(t)C_{4}(t)L_{1,i}(t)^{-1}P_{3,i}(t),\\
L_{2,i}(t)^{-1}\left[  P_{2,i}(t)^{\intercal}-\left(  P_{2,i}(t)^{\intercal
}C_{4}(t)+P_{1,i}(t)C_{2}(t)\right)  L_{1,i}(t)^{-1}P_{3,i}(t)\right] \\
=-\left(  C_{2}(t)^{\intercal}\tilde{P}_{1,i}(t)+\tilde{P}_{2,i}(t)\right)
^{\intercal}D(t)^{-1},
\end{array}
\]%
\[%
\begin{array}
[c]{rl}%
{\small S}_{3,i}{\small (t)=} & {\small a}_{11}{\small (t)\lambda}%
_{1,i}{\small (t),}\\
{\small L}_{7,i}{\small (t)=} & {\small a}_{11}{\small (t)\lambda}%
_{2,i}{\small (t),}\\
{\small S}_{4,i}{\small (t)=} & {\small L}_{2,i}{\small (t)}^{-1}%
{\small S}_{1,i}{\small (t)D}_{2}{\small (t)a}_{11}{\small (t)\lambda}%
_{1,i}{\small (t),}%
\end{array}
\]

\[%
\begin{array}
[c]{rl}%
{\small S}_{5,i}{\small (t)=} & {\small -D(t)}^{-1}\left(  C_{2}%
(t)^{\intercal}\tilde{P}_{1,i}(t)+\tilde{P}_{2,i}(t)\right)  {\small D}%
_{2}{\small (t)a}_{11}{\small (t)}\lambda_{1,i}(t)\\
{\small =} & {\small -D(t)}^{-1}\left(  C_{2}(t)^{\intercal}\tilde{P}%
_{1,i}(t)+\tilde{P}_{2,i}(t)\right)  {\small D}_{2}{\small (t)a}%
_{11}{\small (t)}\\
& \cdot\left[  \tilde{b}(t)\varphi_{2,i}(t)-\tilde{a}(t)\tilde{\varphi}%
_{1,i}(t)-\tilde{b}(t)\tilde{\varphi}_{2,i}(t)+\bar{a}(t)B_{2}(t)\varphi
_{2,i}(t)-\bar{a}(t)\tilde{v}_{1,i}(t)-\bar{b}(t)\tilde{v}_{2,i}(t)\right] \\
& {\small +D(t)}^{-1}\left(  C_{2}(t)^{\intercal}\tilde{P}_{1,i}(t)+\tilde
{P}_{2,i}(t)\right)  {\small D}_{2}{\small (t)a}_{11}{\small (t)D}%
_{2}{\small (t)}^{\intercal}\left(  \tilde{P}_{1,i}(t)C_{2}(t)+\tilde{P}%
_{2,i}(t)^{\intercal}\right)  {\small D(t)}^{-1}\\
& {\small \cdot}\left[  \tilde{d}(t)\varphi_{2,i}(t)-\tilde{c}(t)\tilde
{\varphi}_{1,i}(t)-\tilde{d}(t)\tilde{\varphi}_{2,i}(t)+\bar{c}(t)B_{2}%
(t)\varphi_{2,i}(t)-\bar{c}(t)\tilde{v}_{1,i}(t)-\bar{d}(t)\tilde{v}%
_{2,i}(t)\right] \\
& {\small +\beta}_{i}{\small (t),}\\
{\small L}_{11,i}{\small (t)=} & {\small -D(t)}^{-1}\left(  C_{2}%
(t)^{\intercal}\tilde{P}_{1,i}(t)+\tilde{P}_{2,i}(t)\right)  {\small D}%
_{2}{\small (t)a}_{11}{\small (t)c(t)}^{\intercal}{\small P}_{3,i}%
{\small (t)}\\
& {\small +}\left[  D(t)^{-1}\left(  C_{2}(t)^{\intercal}\tilde{P}%
_{1,i}(t)+\tilde{P}_{2,i}(t)\right)  D_{2}(t)a_{11}(t)D_{2}(t)^{\intercal
}\left(  \tilde{P}_{1,i}(t)C_{2}(t)+\tilde{P}_{2,i}(t)^{\intercal}\right)
D(t)^{-1}\right] \\
& \cdot{\small d(t)}^{\intercal}{\small P}_{3,i}{\small (t)+\tilde{\beta}}%
_{i}{\small (t),}\\
{\small L}_{2,i}{\small (t)}^{-1}{\small S}_{2,i}{\small (t)=} &
{\small \beta}_{5,i}{\small (t)\varphi}_{1,i}{\small (t)+\beta}_{6,i}%
{\small (t)\varphi}_{2,i}{\small (t)+\beta}_{7,i}{\small (t)v}_{1,i}%
{\small (t)+\beta}_{8,i}{\small (t)v}_{2,i}{\small (t),}%
\end{array}
\]
{ }where
\[
\lambda_{1,i}(t)=-\left[  D_{1}(t)^{\intercal}\varphi_{1,i}(t)+D_{2}%
(t)^{\intercal}L_{2,i}(t)^{-1}S_{2,i}(t)\right]  ,
\]%
\[
\lambda_{2,i}(t)=-\left[  D_{1}(t)^{\intercal}P_{2,i}(t)^{\intercal}%
+D_{2}(t)^{\intercal}L_{2,i}(t)^{-1}L_{4,i}(t)+D_{3}(t)^{\intercal}\right]  ,
\]%
\[%
\begin{array}
[c]{rl}%
\beta_{i}(t)= & {\small D(t)}^{-1}\left[  \tilde{d}(t)\varphi_{2,i}%
(t)-\tilde{c}(t)\tilde{\varphi}_{1,i}(t)-\tilde{d}(t)\tilde{\varphi}%
_{2,i}(t)+\bar{c}(t)B_{2}(t)\varphi_{2,i}(t)-\bar{c}(t)\tilde{v}_{1,i}%
(t)-\bar{d}(t)\tilde{v}_{2,i}(t)\right] \\
= & \beta_{1,i}(t)\varphi_{1,i}(t)+\beta_{2,i}(t)\varphi_{2,i}(t)+\beta
_{3,i}(t)v_{1,i}(t)+\beta_{4,i}(t)v_{2,i}(t),
\end{array}
\]%
\[%
\begin{array}
[c]{l}%
\beta_{1,i}(t)={\small D(t)}^{-1}\left[  \tilde{c}(t)P_{1,i}(t)^{-1}+\tilde
{d}(t)P_{2,i}(t)P_{1,i}(t)^{-1}\right]  ,\\
\beta_{2,i}(t)={\small D(t)}^{-1}\bar{c}(t)B_{2}(t),\\
\beta_{3,i}(t)={\small D(t)}^{-1}\left[  \bar{c}(t)P_{1,i}(t)^{-1}+\bar
{d}(t)P_{2,i}(t)P_{1,i}(t)^{-1}\right]  ,\\
\beta_{4,i}(t)=-{\small D(t)}^{-1}\bar{d}(t),
\end{array}
\]%
\[
\tilde{\beta}_{i}(t)=D(t)^{-1}d(t)^{\intercal}P_{3,i}(t),
\]%
\[%
\begin{array}
[c]{l}%
{\small \beta}_{5,i}{\small (t)=L}_{2,i}{\small (t)}^{-1}\left[
P_{2,i}(t)^{\intercal}C_{1}(t)^{\intercal}-(P_{1,i}(t)C_{2}(t)+P_{2,i}%
(t)^{\intercal}C_{4}(t))L_{1,i}(t)^{-1}P_{3,i}(t)C_{1}(t)^{\intercal}\right]
,\\
{\small \beta}_{6,i}{\small (t)=L}_{2,i}{\small (t)}^{-1}\left[
P_{1,i}(t)B_{2}(t)+(P_{1,i}(t)C_{2}(t)+P_{2,i}(t)^{\intercal}C_{4}%
(t))L_{1,i}(t)^{-1}P_{2,i}(t)B_{2}(t)\right]  ,\\
{\small \beta}_{7,i}{\small (t)=L}_{2,i}{\small (t)}^{-1},\\
{\small \beta}_{8,i}{\small (t)=L}_{2,i}{\small (t)}^{-1}(P_{1,i}%
(t)C_{2}(t)+P_{2,i}(t)^{\intercal}C_{4}(t))L_{1,i}(t)^{-1}.
\end{array}
\]

{\textbf{Proof of Lemma \ref{le-gamma}:} }By Lemma \ref{le-con-equ}, and the
relations between $_{i}P(\cdot)$, $_{i}\tilde{P}(\cdot)$, $_{i}\varphi(\cdot)$
and $_{i}\tilde{\varphi}(\cdot)$, we have%
\[%
\begin{array}
[c]{l}%
M_{5,i}(t)\\
=-\tilde{\gamma}(t)^{\intercal}\text{ }_{i}\tilde{P}(t)\text{ }_{i}%
\tilde{\varphi}(t)-\text{ }_{i}\tilde{\varphi}(t)^{\intercal}\text{ }%
_{i}\tilde{P}(t)\text{ }_{i}\tilde{\gamma}(t)+\text{ }_{i}\tilde{\varphi
}(t)^{\intercal}\text{ }_{i}\dot{\tilde{P}}(t)\text{ }_{i}\tilde{\varphi
}(t)+\text{ }_{i}\tilde{v}(t)^{\intercal}\text{ }_{i}\tilde{P}(t)\text{ }%
_{i}\tilde{v}(t)\\
\text{ }-\left(  \tilde{B}(t)^{\intercal}\text{ }_{i}\tilde{P}(t)\text{ }%
_{i}\tilde{\varphi}(t)+\tilde{D}(t)^{\intercal}\text{ }_{i}\tilde{P}(t)\text{
}_{i}\tilde{v}(t)\right)  ^{\intercal}(\tilde{R}(t)+\tilde{D}(t)^{\intercal
}\text{ }_{i}\tilde{P}(t)\tilde{D}(t))^{-1}\\
\text{ }\cdot\left(  \tilde{B}(t)^{\intercal}\text{ }_{i}\tilde{P}(t)\text{
}_{i}\tilde{\varphi}(t)+\tilde{D}(t)^{\intercal}\text{ }_{i}\tilde{P}(t)\text{
}_{i}\tilde{v}(t)\right) \\
=-\gamma_{1,i}(t)^{\intercal}P_{1,i}(t)^{-1}\varphi_{1,i}(t)-\varphi
_{1,i}(t)^{\intercal}P_{1,i}(t)^{-1}\dot{P}_{1,i}(t)P_{1,i}(t)^{-1}%
\varphi_{1,i}(t)-\varphi_{1,i}(t)^{\intercal}P_{1,i}(t)^{-1}\gamma_{1,i}(t)\\
\text{ }+v_{1,i}(t)^{\intercal}P_{1,i}(t)^{-1}v_{1,i}(t)-\gamma_{2,i}%
(t)^{\intercal}P_{3,i}(t)^{-1}\varphi_{2,i}(t)-\varphi_{2,i}(t)^{\intercal
}P_{3,i}(t)^{-1}\dot{P}_{3,i}(t)P_{3,i}(t)^{-1}\varphi_{2,i}(t)\\
\text{ }-\varphi_{2,i}(t)^{\intercal}P_{3,i}(t)^{-1}\gamma_{2,i}%
(t)+v_{2,i}(t)^{\intercal}P_{3,i}(t)^{-1}v_{2,i}(t)\\
\text{ }+\left\{  \left[  \varphi_{1,i}(t)^{\intercal}D_{1}(t)+\varphi
_{2,i}(t)^{\intercal}P_{3,i}(t)^{-1}\left(  P_{2,i}(t)D_{1}(t)+D_{3}%
(t)+v_{1,i}(t)^{\intercal}D_{2}(t)+v_{2,i}(t)^{\intercal}P_{3,i}%
(t)^{-1}P_{2,i}(t)D_{2}(t)\right)  \right]  \right. \\
\text{ }\cdot\left(  S_{3,i}(t)+L_{3,i}(t)\tilde{P}_{3,i}(t)^{-1}\varphi
_{2,i}(t)\right)  +\left[  \varphi_{1,i}(t)^{\intercal}C_{3}(t)+\varphi
_{2,i}(t)^{\intercal}P_{3,i}(t)^{-1}\left(  P_{2,i}(t)C_{1}(t)+C_{3}%
(t)\right)  \right. \\
\text{ }\left.  \left.  +v_{1,i}(t)^{\intercal}C_{2}(t)+v_{2,i}(t)^{\intercal
}P_{3,i}(t)^{-1}P_{2,i}(t)C_{2}(t)\right]  \left(  S_{5,i}(t)+L_{11,i}%
(t)\tilde{P}_{3,i}(t)^{-1}\varphi_{2,i}(t)\right)  \right\}  .
\end{array}
\]
It can be verified that the following two equalities hold
\begin{equation}%
\begin{array}
[c]{l}%
\varphi_{2,i}(t)^{\intercal}\{P_{3,i}(t)^{-1}(P_{2,i}(t)D_{1}(t)+D_{3}%
(t))L_{7,i}(t)P_{3,i}(t)^{-1}+P_{3,i}(t)^{-1}(P_{2,i}(t)C_{1}(t)\\
+C_{3}(t))L_{11,i}(t)P_{3,i}(t)^{-1}-P_{3,i}(t)^{-1}\dot{P}_{3,i}%
(t)P_{3,i}(t)^{-1}\}\varphi_{2,i}(t)\\
=\varphi_{2,i}(t)^{\intercal}P_{3,i}(t)^{-1}\left[  P_{2,i}(t)B_{1}%
(t)P_{3,i}(t)+P_{3,i}(t)B_{3}(t)^{\intercal}+P_{3,i}(t)B_{1}(t)^{\intercal
}P_{2,i}(t)^{\intercal}\right. \\
\left.  +P_{3,i}(t)B_{2}(t)^{\intercal}L_{9,i}(t)-P_{3,i}(t)B_{4}%
(t)P_{3,i}(t)+B_{3}(t)P_{3,i}(t)\right]  P_{3,i}(t)^{-1}\varphi_{2,i}(t),
\end{array}
\label{eq-022301}%
\end{equation}%
\begin{equation}%
\begin{array}
[c]{l}%
\left\{  \left[  \left(  \varphi_{1,i}(t)^{\intercal}+\varphi_{2,i}%
(t)^{\intercal}P_{3,i}(t)^{-1}P_{2,i}(t)\right)  D_{1}(t)+\left(
v_{1,i}(t)^{\intercal}+v_{2,i}(t)^{\intercal}P_{3,i}(t)^{-1}P_{2,i}(t)\right)
D_{2}(t)+\varphi_{2,i}(t)^{\intercal}P_{3,i}(t)^{-1}D_{3}(t)\right]  \right.
\\
\text{ \ }-\left[  \left(  \varphi_{1,i}(t)^{\intercal}+\varphi_{2,i}%
(t)^{\intercal}P_{3,i}(t)^{-1}P_{2,i}(t)\right)  C_{1}(t)+\left(
v_{1,i}(t)^{\intercal}+v_{2,i}(t)^{\intercal}P_{3,i}(t)^{-1}P_{2,i}(t)\right)
C_{2}(t)\right. \\
\text{ \ }\left.  +\varphi_{2,i}(t)^{\intercal}P_{3,i}(t)^{-1}C_{3}%
(t)-v_{2,i}(t)^{\intercal}P_{3,i}(t)^{-1}\right]  \left.  D(t)^{-1}\left(
C_{2}(t)^{\intercal}\tilde{P}_{1,i}(t)+\tilde{P}_{2,i}(t)\right)
D_{2}(t)\right\}  a_{11}(t)\lambda_{1,i}(t)\\
+\left\{  \left[  \varphi_{1,i}(t)^{\intercal}D_{1}(t)+v_{1,i}(t)^{\intercal
}D_{2}(t)+v_{2,i}(t)^{\intercal}P_{3,i}(t)^{-1}P_{2,i}(t)D_{2}(t)\right]
\right. \\
-\left[  \varphi_{1,i}(t)^{\intercal}C_{1}(t)+v_{1,i}(t)^{\intercal}%
C_{2}(t)+v_{2,i}(t)^{\intercal}P_{3,i}(t)^{-1}P_{2,i}(t)C_{2}(t)-v_{2,i}%
(t)^{\intercal}P_{3,i}(t)^{-1}\right] \\
\left.  \cdot D(t)^{-1}\left(  C_{2}(t)^{\intercal}\tilde{P}_{1,i}%
(t)+\tilde{P}_{2,i}(t)\right)  D_{2}(t)\right\}  a_{11}(t)\lambda
_{2,i}(t)P_{3,i}(t)^{-1}\varphi_{2,i}(t)\\
+\lambda_{1,i}(t)^{\intercal}a_{11}(t)\left\{  \left(  P_{2,i}(t)C_{1}%
(t)+C_{3}(t)\right)  D(t)^{-1}\left(  C_{2}(t)^{\intercal}\tilde{P}%
_{1,i}(t)+\tilde{P}_{2,i}(t)\right)  D_{2}(t)-\left(  P_{2,i}(t)D_{1}%
(t)+D_{3}(t)\right)  \right. \\
\left.  +P_{3,i}(t)B_{2}(t)^{\intercal}L_{2,i}(t)^{-1}S_{1,i}(t)D_{2}%
(t)\right\}  ^{\intercal}P_{3,i}(t)^{-1}\varphi_{2,i}(t)\\
+\varphi_{2,i}(t)^{\intercal}P_{3,i}(t)^{-1}\left\{  \left(  P_{2,i}%
(t)C_{1}(t)+C_{3}(t)\right)  D(t)^{-1}\left(  C_{2}(t)^{\intercal}\tilde
{P}_{1,i}(t)+\tilde{P}_{2,i}(t)\right)  D_{2}(t)-\left(  P_{2,i}%
(t)D_{1}(t)+D_{3}(t)\right)  \right. \\
\left.  +P_{3,i}(t)B_{2}(t)^{\intercal}L_{2,i}(t)^{-1}S_{1,i}(t)D_{2}%
(t)\right\}  a_{11}(t)\lambda_{1,i}(t)\\
=-\lambda_{1,i}(t)^{\intercal}a_{11}(t)\lambda_{1,i}(t)-\varphi_{2,i}%
(t)^{\intercal}B_{2}(t)^{\intercal}L_{2,i}(t)^{-1}S_{1,i}(t)D_{2}%
(t)a_{11}(t)\lambda_{2,i}(t)P_{3,i}(t)^{-1}\varphi_{2,i}(t).
\end{array}
\label{eq-022302}%
\end{equation}
By (\ref{eq-022301}) and (\ref{eq-022302}), we have
\[%
\begin{array}
[c]{l}%
M_{5,i}(t)=-\lambda_{1,i}(t)^{\intercal}a_{11}(t)\lambda_{1,i}(t)-\gamma
_{1,i}(t)^{\intercal}P_{1,i}(t)^{-1}\varphi_{1,i}(t)-\varphi_{1,i}%
(t)^{\intercal}P_{1,i}(t)^{-1}\gamma_{1,i}(t)+v_{1,i}(t)^{\intercal}%
P_{1,i}(t)^{-1}v_{1,i}(t)\\
-\varphi_{1,i}(t)^{\intercal}P_{1,i}(t)^{-1}\dot{P}_{1,i}(t)P_{1,i}%
(t)^{-1}\varphi_{1,i}(t)-B_{2}(t)^{\intercal}L_{2,i}(t)^{-1}S_{1,i}%
(t)B_{2}(t)+C_{1}(t)\beta_{1,i}(t)+C_{2}(t)\beta_{3,i}(t)\\
+C_{2}(t)^{\intercal}L_{2,i}(t)^{-1}\left(  P_{1,i}(t)C_{2}(t)+P_{2,i}%
(t)^{\intercal}C_{4}(t)\right)  L_{1,i}(t)^{-1}+C_{4}(t)L_{1,i}(t)^{-1}\\
\cdot\left[  I_{m}-P_{3,i}(t)C_{2}(t)^{\intercal}L_{2,i}(t)^{-1}\right.
\left.  \left(  P_{1,i}(t)C_{2}(t)+P_{2,i}(t)^{\intercal}C_{4}(t)\right)
L_{1,i}(t)^{-1}\right]  +B_{1}(t)^{\intercal}+B_{2}(t)^{\intercal}\beta
_{5,i}(t)\\
+C_{1}(t)\beta_{2,i}(t)+B_{1}(t)+\beta_{5,i}(t)^{\intercal}B_{2}%
(t)+C_{1}(t)L_{1,i}(t)^{-1}\left\{  P_{2,i}(t)B_{2}(t)+P_{3,i}(t)C_{2}%
(t)^{\intercal}L_{2,i}(t)^{-1}\right. \\
\left[  -P_{1,i}(t)-\left(  P_{1,i}(t)C_{2}(t)+P_{2,i}(t)^{\intercal}%
C_{4}(t)\right)  \right.  \left.  \left.  L_{1,i}(t)^{-1}P_{2,i}(t)\right]
B_{2}(t)\right\} \\
+C_{1}(t)\beta_{3,i}(t)+C_{2}(t)\beta_{1,i}(t)+C_{1}(t)\beta_{4,i}(t)+\left[
I_{m}-C_{4}(t)L_{1,i}(t)^{-1}P_{3,i}(t)\right]  \left(  C_{2}(t)^{\intercal
}\beta_{5,i}(t)+C_{1}(t)^{\intercal}\right) \\
+C_{2}(t)\beta_{4,i}(t)+\left[  C_{4}(t)L_{1,i}(t)^{-1}P_{3,i}(t)-I_{m}%
\right]  C_{2}(t)^{\intercal}L_{2,i}(t)^{-1}+B_{2}(t)\beta_{7,i}%
(t)+C_{2}(t)\beta_{2,i}(t)+\beta_{7,i}(t)^{\intercal}B_{2}(t)\\
+C_{2}(t)L_{1,i}(t)^{-1}\cdot\left\{  P_{2,i}(t)B_{2}(t)+P_{3,i}%
(t)C_{2}(t)^{\intercal}L_{2,i}(t)^{-1}\left[  -P_{1,i}(t)-\left(
P_{1,i}(t)C_{2}(t)+P_{2,i}(t)^{\intercal}C_{4}(t)\right)  \right.  \right. \\
\left.  \left.  L_{1,i}(t)^{-1}P_{2,i}(t)\right]  B_{2}(t)\right\}
+B_{2}(t)^{\intercal}\beta_{8,i}(t)+\beta_{8,i}(t)^{\intercal}B_{2}%
(t)C_{2}(t)^{\intercal}\beta_{6,i}(t)\\
+C_{4}(t)L_{1,i}(t)^{-1}\left[  P_{2,i}(t)B_{2}(t)-P_{3,i}(t)C_{2}%
(t)^{\intercal}\beta_{6,i}(t)\right]  .
\end{array}
\]
{Under the bounded assumptions in Lemma \ref{le-gamma}, one can check that
$\left\{  |{a_{11}}(t)|\right\}  _{i\geq i_{0}}$, $\left\{  |{\beta_{j,i}%
(t)}|\right\}  _{i\geq i_{0}}$, $j=1,2,...,8$, $\left\{  \mathbb{E}\int%
_{0}^{T}|\lambda_{1,i}(t)|^{2}dt\right\}  _{i\geq i_{0}}$, $\left\{
\mathbb{E}\int_{0}^{T}|\varphi_{1,i}(t)|^{2}dt\right\}  _{i\geq i_{0}}$,
$\left\{  \mathbb{E}\int_{0}^{T}|v_{1,i}(t)|^{2}dt\right\}  _{i\geq i_{0}}$,
$\left\{  \mathbb{E}\int_{0}^{T}|\varphi_{2,i}(t)|^{2}dt\right\}  _{i\geq
i_{0}}$, $\left\{  \mathbb{E}\int_{0}^{T}|v_{2,i}(t)|^{2}dt\right\}  _{i\geq
i_{0}}$ are uniformly bounded. Thus $\left\{  \mathbb{E}\int_{0}^{T}%
|M_{5,i}(t)|dt\right\}  _{i\geq i_{0}}$ is uniformly bounded.} This completes
the proof. $\blacksquare$

\end{document}